\newtheorem{thm}{Theorem}[section]
\newtheorem{prop}[thm]{Proposition}
\newtheorem{lemma}[thm]{Lemma}
\newtheorem{coro}[thm]{Corollary}
\newtheorem{definition}[thm]{Definition} 
\newtheorem{conjecture}[thm]{Conjecture}
\newcommand{\Z}{{\mathbb Z}} 
\newcommand{\Q}{{\mathbb Q}}
\newcommand{\Com}{{\mathbb C}}
\newcommand{\Qp}{{\mathbb {Q}_{p}}}
\newcommand{\A}{{\mathbb A}}
\newcommand{\Sp}{{\mathrm{Spec} \:}}
\newcommand{\cI}{{\mathrm{c\text{--} Ind}}}
\DeclareMathOperator{\Spm}{\mathrm{m-Spec}}
\let\c@equation\c@thm
\numberwithin{equation}{section}
\title{On the Breuil--Schneider conjecture: Generic case}
\author{Alexandre Pyvovarov}
\date{\today}
\begin{document}

\maketitle

\begin{abstract}

We use the Taylor--Wiles--Kisin patching method to prove some new cases of the Breuil--Schneider conjecture.

\end{abstract}

\tableofcontents

\section{Introduction}\label{Intro}
The aim of this work is to deduce some new cases of Breuil--Schneider conjecture using the patching construction of \cite{MR3529394}. The conjecture in question predicts the existence of $GL_n(F)$-invariant norms on locally algebraic $GL_n(F)$-representations, where $F$ is a $p$-adic field. This conjecture was first proposed by Breuil and Schneider in \cite{MR2359853}, and it may be seen as first evidence to a $p$-Langlands correspondence. For a brief survey of this conjecture one may consult \cite{MR3409331}. This introduction is strongly influenced by this survey. Our main result Theorem \ref{t2}, computes the locally algebraic vectors in the unitary Banach space representation obtained from this theory.

\subsection {Notation} \label{sec: Not}

Let $p$ a prime number such that $p\nmid 2n$. Let $F$ be a finite extension of $\Qp$ with a finite residue field $k_{F}$. Let $\mathcal{O}_F$ be its complete discrete valuation ring, let $\mathfrak{p}$ be the maximal ideal of $\mathcal{O}_F$ with uniformizer $\varpi$, and let $q=|\mathcal{O}_F/\varpi \mathcal{O}_F|$. Let $G=GL_n(F)$.

Let $E$ be a finite extension of $\Qp$ (the field of coefficients), $\mathcal{O}$ the ring of integers of $E$ and $\mathbb{F}$ the residue field. Fix a residual Galois representation  $\overline{r} : G_F \longrightarrow GL_n(\mathbb{F})$ of the local Galois group $G_F:= \mathrm{Gal}(\bar{F}/F)$. We assume that $E$ is large enough to contain all the embeddings $F \hookrightarrow \overline{\Q}_{p}$.

Fix an isomorphism $\Com \simeq \overline{\Q}_{p}$. In the literature, smooth representations of $GL_n(F)$ are studied on complex vector spaces, hence the coefficients are in $\overline{\Q}_{p}$.  The section 3.13 \cite{MR3529394} provides a framework which shows that those results, which are valid in the classical theory over complex numbers, have an analogue over $E$ by faithfully flat descent from $\overline{\Q}_{p}$ to $E$. For instance if $\pi$ is an irreducible representation of $GL_n(F)$ with coefficients in $E$, after extending scalars to larger extension $E'/E$ we may assume that $\pi$ is absolutely irreducible and hence do the base change to $\overline{\Q}_{p}$. In this way all the results stated with $E$-coefficients can be proven over $\overline{\Q}_{p}$ without loss of generality. With this in mind we will cite results from \cite{Pyv1} and \cite{Pyv2}, which were proven for $\overline{\Q}_{p}$-coefficients and use them with $E$-coefficients where $E$ is a finite extension of $\Qp$.

In this paper we will follow the notation and conventions  of \cite{MR3529394}(cf. 1.8), unless otherwise is stated.

\subsection{The Breuil--Schneider conjecture}\label{I.1}

For the background on the Breuil--Schneider conjecture we refer the reader to \cite[\S 2]{MR3409331}.

Now suppose $r : G_F \rightarrow GL_n(E)$ is a potentially semi-stable lift of $\overline{r}$, with  Hodge-Tate weights $\mathrm{HT}_{\kappa}= \{i_{\kappa,1} < \ldots < i_{\kappa,n}\}$, for each embedding $\kappa : F\hookrightarrow E$. By Fontaine's recipe one associates an $n$-dimensional Weil-Deligne representation $WD(r)$ to $r$ with coefficients in $\overline{\Q}_{p} \simeq \Com$. Let $\mathrm{rec}$ denote the classical local Langlands correspondence with coefficients in $\Com$ normalized in a way such that the central character of an irreducible smooth representation of $GL_n(F)$ corresponds to the determinant of the associated Weil-Deligne representation via the local class field theory. This is compatible with convention in the book \cite{MR1876802}, see Lemma VII.2.6. Let $\mathrm{rec}_p$ denote the local Langlands correspondence over $\overline{\Q}_{p}$, defined by $\iota \circ \mathrm{rec}_p = \mathrm{rec}\circ \iota$ and define $r_p(\pi) = \mathrm{rec}_p(\pi \otimes |\det|)$. Let $\pi_{sm}(r)$ an irreducible smooth representation of $GL_n(F)$ with coefficients in $E$ defined by $\pi_{sm}(r) = r_p^{-1}(WD(r)^{F-ss})$, where $F-ss$ denotes the Frobenius semi-simplification of $WD(r)$. Assume that $\pi_{sm}(r)$ is generic, i.e. admits a Whittaker model. Then there is a model of $\pi_{sm}(r)$ with coefficients in $E$, denoted again $\pi_{sm}(r)$ which a smooth irreducible $E$-representation of $GL_n(F)$. We say that $r$ is generic when $\pi_{sm}(r)$ is generic. In the case when $\pi_{sm}(r)$ is not generic, we need to do some modifications, see \cite{MR2359853} for more details. Indeed, by the Bernstein-Zelevinsky classification, $\pi_{sm}(r)$ is a Langlands quotient and there is a unique parabolic induction, denoted $\pi_{gen}(r)$, such that $ \pi_{gen}(r)\twoheadrightarrow \pi_{sm}(r)$. This representation has a model over $E$, which we will denote again by $\pi_{gen}(r)$.

To the multi-set $\{ \mathrm{HT}_{\kappa} \}_{\kappa : F \hookrightarrow E}$ one can attach an irreducible algebraic representation of $\mathrm{Res}_{F/\Qp}(GL_n/F)$, which we evaluate at $E$ to get an algebraic representation $\pi_{alg}(r)$ of $GL_n(F)$. More precisely, for each $\kappa$, let $\pi_{alg,\kappa}(r)$ be the irreducible algebraic representation of $GL_n(F)$ of highest weight $\{-i_{\kappa,1},\ldots,-i_{\kappa,n}+n-1\}$ relative to the upper-triangular Borel. Then define $\pi_{alg}(r) = \bigotimes_{\kappa }\pi_{alg,\kappa}(r)$, with $GL_n(F)$ acting diagonally. This is the representation $L_{\xi} \otimes_{\mathcal{O}}E$ with notation of section 1.8\cite{MR3529394}, with $\xi_{\kappa,j}= -i_{\kappa,j}+j-1$.

\medskip

Define: $BS(r):= \pi_{gen}(r)\otimes_{E} \pi_{alg}(r)$.

\medskip
The conjecture, which we state in the generic case, predicts that irreducible locally algebraic representations of $G$ admit integral structures if and only if they are related to Galois representations. More precisely:

\begin{conjecture}
Let $\pi$ be an absolutely irreducible generic representation of $GL_n(F)$ and $\sigma$ an irreducible algebraic representation of algebraic group $\mathrm{Res}_{F/\Qp} GL_n / F$, both having coefficients in $E$. Then the following statements are equivalent:

\begin{enumerate}
\item[(1)] $\pi\otimes_{E} \sigma$ admits a $G$-invariant norm.
\item[(2)] There is a potentially semi-stable Galois representation $r : G_F \rightarrow GL_n(E)$ such that $\pi=\pi_{sm}(r)$ and $\sigma=\pi_{alg}(r)$.
\end{enumerate}

\end{conjecture}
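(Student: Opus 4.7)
The plan is to prove the two implications separately. The substantive direction is (2) $\Rightarrow$ (1), where the Taylor--Wiles--Kisin patching method genuinely enters; the converse (1) $\Rightarrow$ (2) is the classical ``necessary conditions'' direction, essentially the Breuil--Schneider integrality constraints from \cite{MR2359853}, and in the generic setting it reduces to producing a potentially semi-stable lift of some $\overline{r}$ with prescribed inertial type and Hodge--Tate weights matching $(\pi,\sigma)$, which follows from the existence theory for potentially semi-stable deformation rings.

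For (2) $\Rightarrow$ (1), starting from the given lift $r$ of $\overline{r}$, I would first globalize $\overline{r}$ to a suitable automorphic Galois representation of a CM field (via the potential automorphy results in dimension $n$) so that the patching construction of \cite{MR3529394} applies. This yields a profinite $\mathcal{O}[[GL_n(\mathcal{O}_F)]]$-module $M_\infty$ with a commuting action of the patched deformation ring $R_\infty$, together with a closed point $\mathfrak{y}\in \Sp R_\infty[1/p]$ corresponding to $r$. The continuous $E$-dual of the fibre $M_\infty \otimes_{R_\infty} \kappa(\mathfrak{y})$ is a unitary admissible $E$-Banach space representation $\Pi(r)$ of $G$, whose underlying topological vector space tautologically carries a $G$-invariant norm inherited from $M_\infty$.

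The conclusion then comes by invoking Theorem \ref{t2}, which identifies the locally algebraic vectors $\Pi(r)^{\mathrm{l.alg}}$ with $BS(r)=\pi_{sm}(r)\otimes_{E}\pi_{alg}(r)$ under the genericity hypothesis (so that $\pi_{gen}(r)=\pi_{sm}(r)$). Restricting the norm of $\Pi(r)$ to this locally algebraic subspace produces the required $G$-invariant norm on $\pi\otimes_{E}\sigma = BS(r)$, establishing (1).

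The main obstacle is Theorem \ref{t2} itself. The inclusion $BS(r)\hookrightarrow \Pi(r)^{\mathrm{l.alg}}$ is comparatively formal, following from classical local-global compatibility at $p$ for the automorphic forms that patch into $M_\infty$ together with the functor-of-points interpretation of $M_\infty$. The reverse inclusion, ruling out any further locally algebraic contributions, is where the real work lies: it requires a careful Breuil--M\'ezard-type analysis of how the patched deformation ring acts on $K$-isotypic pieces of $M_\infty$, combined with the author's earlier results \cite{Pyv1,Pyv2} pinning down the Bernstein-centre action on generic smooth representations, so as to force the locally algebraic part of $\Pi(r)$ to be exactly the $(\pi_{sm}(r),\pi_{alg}(r))$-isotypic component predicted by $r$.
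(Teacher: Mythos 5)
The statement you are asked to address is labelled a \emph{conjecture} in the paper precisely because it is not proved there in full generality. The introduction states explicitly that (1) $\Rightarrow$ (2) is due to Hu \cite{MR2560407} and that ``the converse is still largely open.'' What the paper establishes (Theorem \ref{3.1}) is a new \emph{case} of (2) $\Rightarrow$ (1), under an extra hypothesis that your argument silently drops.

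Concretely, your argument for (2) $\Rightarrow$ (1) runs: globalize $\overline{r}$, patch to get $M_\infty$, take $\mathfrak{y}$ the point of $\Sp R_\infty[1/p]$ corresponding to $r$, form the Banach representation $V(r_{\mathfrak{y}})$, and then invoke Theorem \ref{t2}. The gap is at the last step. The point $\mathfrak{y}$ lands a priori only in $\Sp R_\infty(\sigma_{\min})'[1/p] = \Sp\bigl(R_\infty\otimes_{R_{\tilde{\mathfrak p}}^{\square}}R_{\tilde{\mathfrak p}}^{\square}(\sigma_{\min})\bigr)[1/p]$, because $r$ is potentially semi-stable of the fixed weight and inertial type. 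Theorem \ref{t2}, however, is stated only for $x\in\Sp R_\infty(\sigma_{\min})[1/p]$, i.e. for $x$ in the support of $M_\infty(\sigma_{\min}^\circ)$. By Proposition \ref{p2} this support is merely a \emph{union of irreducible components} of $\Sp R_\infty(\sigma_{\min})'[1/p]$ (the ``automorphic'' components), and the paper only remarks at the very end that the equality $\Sp R_\infty(\sigma_{\min})[1/p]=\Sp R_\infty(\sigma_{\min})'[1/p]$ is \emph{expected}, not proved. If $\mathfrak{y}$ lies off the automorphic locus, you get $V(r_{\mathfrak{y}})=0$ and the argument produces the zero norm. Your proof would need, and does not supply, an argument that every potentially semi-stable lift of $\overline{r}$ of the given type and weight lies on an automorphic component; that is an open modularity-type problem, not a formal consequence of patching.

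A secondary issue: your description of (1) $\Rightarrow$ (2) misstates what is involved. You write that it reduces to ``producing a potentially semi-stable lift of some $\overline{r}$ \ldots which follows from the existence theory for potentially semi-stable deformation rings.'' That is not how this direction works, and it is not what Hu proves. The content of (1) $\Rightarrow$ (2) is that the presence of a $G$-invariant norm on $\pi\otimes\sigma$ forces the filtered $(\varphi,N)$-module built from $\mathrm{rec}(\pi)$ and the Hodge--Tate weights of $\sigma$ to be (weakly) admissible; one then applies Colmez--Fontaine to obtain the Galois representation $r$. Hu's argument is a direct integrality/admissibility computation and has nothing to do with deformation rings or with lifting a fixed $\overline{r}$. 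You should simply cite \cite{MR2560407} for this direction rather than resketch a different (incorrect) mechanism.

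In short: the paper does not prove the conjecture, and neither does your proposal. What can be salvaged from your outline is the proof of the paper's Theorem \ref{3.1}, which is exactly your (2) $\Rightarrow$ (1) argument \emph{with the added hypothesis} that $r$ corresponds to a closed point of $\Sp R_\infty(\sigma_{\min})[1/p]$. Under that hypothesis your steps (patch, form $V(r)$, apply Theorem \ref{t2}, restrict the unitary norm) match the paper's proof of Theorem \ref{3.1}.
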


The implication (1) $\Rightarrow$ (2), was proven by Hu in full generality in his paper \cite{MR2560407}. The converse is still largely open. In \cite{MR3529394}, the authors prove many cases of this conjecture by constructing an admissible unitary $E$-Banach space representation $V(r)$ of $GL_n(F)$, such that the locally algebraic vectors in $V(r)$ are isomorphic to $BS(r)$ as $GL_n(F)$. In \cite{MR3529394} the authors assume that $r$ is potentially crystalline. This corresponds to the case when the monodromy operator $N$ of the Weil-Deligne representation $WD(r)$ is zero. 

In this paper we extend the methods of \cite{MR3529394} to handle the case, when $r$ is potentially semi-stable.  This corresponds to the case when the monodromy operator $N$ of the Weil-Deligne representation $WD(r)$ is allowed to be arbitrary. We will be mostly concerned with the case when the Galois representation $r$ is generic, in this case $BS(r)=\pi_{sm}(r)\otimes_{E}\pi_{alg}(r)$ is irreducible. We prove, that the locally algebraic vectors of $V(r)$ are isomorphic to $BS(r)$. This will allow us to deduce new cases for the implication (2) $\Rightarrow$ (1).

It was observed in \cite{MR3409331}, that norms are related to lattices, up to equivalence. Indeed given a norm $\|.\|$, the lattice $\Lambda$ will be a unit ball for this norm. Conversely, given a lattice $\Lambda$, set  $\|x\| = q_E^{-v_{\Lambda}(x)}$, where $q_E = |\mathbb{F}|$ and $v_{\Lambda}(x)$ is the largest $k$ such that $x \in \varpi_E^{k}\Lambda$ ($\varpi_E$ is a uniformizer of $E$). Thus we are looking for integral structures in $BS(r)$.

\subsection{Typical representations}\label{I.2}

Recall that the Bernstein decomposition expresses the category of smooth $\overline{\Q}_p$-valued representations of a $p$-adic reductive group $G$ as the product of certain indecomposable full subcategories, called Bernstein components. Those components are parametrized by the inertial classes. Let us now recall the definition of an inertial class. Let $M$ be a Levi subgroup of some parabolic subgroup of $G$ and let $\rho$ be an irreducible supercuspidal representation of $M$ and consider a set of pairs $(M, \rho)$ as above. We say that two pairs $(M_1, \rho_1)$ and $(M_2, \rho_2)$ are inertially equivalent if and only if there is $g \in G$ and an unramified character $\chi$ of $M_2$ such that:

\begin{center}
$M_2=M_1^{g}$ and $\rho_2 \simeq \rho_1^g \otimes\chi$
\end{center} 

\noindent where $M_1^g:=g^{-1}M_1g$ and $\rho_1^g(x) = \rho_1(gxg^{-1})$, $\forall x \in M_1^g$. An equivalence class of all such pairs will be denoted $[M,\rho]_{G}$. The set of inertial class equivalences of all such pairs will be denoted by $\mathcal{B}(G)$.

Let  $\mathcal{R}(G)$ be the category of all smooth  $E$-representations of $G$. We denote by $i_{P}^{G} : \mathcal{R}(M) \longrightarrow \mathcal{R}(G)$ the normalized parabolic induction functor, where $P=MN$ is a parabolic subgroup of $G$ with Levi subgroup $M$. Let $\overline{P}$ be the opposite parabolic with respect to $M$. We use the notation $\mathrm{Ind}$ and $\cI$ to denote the induction and compact induction respectively.

\medskip

\noindent We are given an inertial class $\Omega:=[M,\rho]_{G}$, where $\rho$ is a supercuspidal representation of $M$ and $D:=[M,\rho]_{M}$. To any inertial class $\Omega$ we may associate a full subcategory $\mathcal{R}^{\Omega}(G)$ of $\mathcal{R}(G)$, such that $(\pi,V) \in \mathrm{Ob}(\mathcal{R}^{\Omega}(G))$ if and only if every irreducible $G$-subquotient $\pi_{0}$ of $\pi$ appear as a composition factor of $i_{P}^{G}(\rho \otimes \omega)$ for $\omega$ some unramified character of $M$ and $P$ some parabolic subgroup of $G$ with Levi factor $M$. The category $\mathcal{R}^{\Omega}(G)$ is called a Bernstein component of $\mathcal{R}(G)$. We will say that a representation $\pi$ is in $\Omega$ if $\pi$ is an object of $\mathcal{R}^{\Omega}(G)$. According to \cite{MR771671}, we have a decomposition:

\[\mathcal{R}(G) = \prod_{\Omega \in \mathcal{B}(G)} \mathcal{R}^{\Omega}(G)\]

So in order to understand the category $\mathcal{R}(G)$, it is enough to restrict our attention to the components. We may understand those components via the theory of types. This is a way to parametrize all the irreducible representations of $G$ up to inertial equivalence using irreducible representations of compact open subgroups of $G$.

Let $J$ be a compact open subgroup of $G$ and let $\lambda$ be an irreducible representation of $J$. We say that $(J, \lambda)$ is an $\Omega$-type if and only if for every irreducible representation $(\pi,V) \in \mathrm{Ob}(\mathcal{R}^{\Omega}(G))$, $V$ is generated as a $G$-representation by  the $\lambda$-isotypical component of $V$. 

Let $\mathcal{R}_{\lambda}(G)$ be a full subcategory of $\mathcal{R}(G)$ such that $(\pi,V) \in \mathrm{Ob}(\mathcal{R}_{\lambda}(G))$ if and only if $V$ is generated as a $G$-representation by $V^{\lambda}$ (the $\lambda$-isotypical component of $V$). Since  $(J,\lambda)$ is an $\Omega$-type, it follows from \cite[(3.6) and (3.5)]{MR1643417} that $\mathcal{R}^{\Omega}(G)=\mathcal{R}_{\lambda}(G)$, because $\lambda$ defines a special idempotent.

Let $K$ be a maximal compact open subgroup of $G$ containing $J$. We say that an irreducible representation $\sigma$ of $K$ is \textit{typical for $\Omega$} if for any irreducible representation $\pi$ of $G$, $\mathrm{Hom}_{K}(\sigma,\pi)\neq 0$ implies that $\pi$ is an object in $\mathcal{R}^{\Omega}(G)$.

\bigskip

\noindent Define $\mathcal{H}(G,\lambda):= \mathrm{End}_{G}(\cI_J^G \lambda)$. Then for any $\Omega$-type $(J,\lambda)$, by Theorem 4.2 (ii)\cite{MR1643417},  the functor:
$$\begin{array}{ccccc}
\mathfrak{M}_{\lambda} & : & \mathcal{R}_{\lambda}(G) & \to & \mathcal{H}(G,\lambda)-Mod \\
 & & \pi &  \mapsto & \mathrm{Hom}_{J}(\lambda, \pi) = \mathrm{Hom}_{G}(\cI_J^G \lambda, \pi)\\
\end{array}$$

\noindent induces an equivalence of categories. 

Write $\mathfrak{Z}_{\Omega}$ for the centre of category $\mathcal{R}^{\Omega}(G)$ and $\mathfrak{Z}_{D}$ for the centre of category $\mathcal{R}^{D}(M)$, which is defined in the same way as $\mathcal{R}^{\Omega}(G)$. Recall that the centre of a category is the ring of endomorphisms of the identity functor. For example the centre of the category $\mathcal{H}(G,\lambda)-Mod$ is $Z(\mathcal{H}(G,\lambda))$, where $Z(\mathcal{H}(G,\lambda))$ is the centre of the ring $\mathcal{H}(G,\lambda)$. We will call $\mathfrak{Z}_{\Omega}$ a Bernstein centre.

\medskip

For $G=GL_n(F)$, the types can be constructed in an explicit manner (cf. \cite{MR1204652}, \cite{MR1643417} and \cite{MR1711578}) for every Bernstein component. Moreover, Bushnell and Kutzko have shown that $\mathcal{H}(G, \lambda)$ is naturally isomorphic to a tensor product of affine Hecke algebras of type A.

\medskip

The simplest example of a type is $(I, 1)$, where $I$ is Iwahori subgroup of $G$ and $1$ is the trivial representation of $I$. In this case $\Omega = [T,1]_G$, where $T$ is the subgroup of diagonal matrices. We will refer to this example as the Iwahori case.

\medskip

In \cite{MR1728541} section 6 (just above proposition 2) the authors define irreducible $K$-representations $\sigma_{\mathcal{P}}(\lambda)$, where $\mathcal{P}$ is  a partition-valued function with finite support (cf. section 2 \cite{MR1728541}). There is a natural partial ordering, as defined in \cite{MR1728541}, on the partition-valued functions. Let $\mathcal{P}_{max}$ be the maximal partition-valued function and let $\mathcal{P}_{min}$ the minimal one. Define $\sigma_{max}(\lambda):=\sigma_{\mathcal{P}_{max}}(\lambda)$ and $\sigma_{min}(\lambda):=\sigma_{\mathcal{P}_{min}}(\lambda)$. Both $\sigma_{max}(\lambda)$ and $\sigma_{min}(\lambda)$ occur in $\mathrm{Ind}_{J}^{K} \lambda$ with multiplicity 1.

\medskip

In the Iwahori case, $\sigma_{min}(\lambda)$ is the inflation of Steinberg representation of $GL_n(k_F)$ to $K$ and $\sigma_{max}(\lambda)$ is the trivial representation.

\subsection{Main result} \label{I.3}

Let $\mathbf{v}  = \{ \mathrm{HT}_{\kappa} \}_{\kappa : F \hookrightarrow E}$ be a multiset of all Hodge-Tate weights and let $\tau : I_F \rightarrow GL_n(E)$ be an inertial type, i.e. $\tau$ is a representation of $I_F$ with open kernel which extends to a representation of the Weil group $W_F$ of $F$, where $I_F$ is the inertia subgroup of $G_F$. We let $R_{\tilde{\mathfrak{p}}}^{\square}$ denote  the universal $\mathcal{O}$-lifting ring of $\overline{r}$. Then there is a ring $R_{\tilde{\mathfrak{p}}}^{\square}(\sigma_{min}):= R^{\square}_{\overline{r}}(\tau, \mathbf{v})$, which is the unique reduced and $p$-torsion free quotient of $R_{\tilde{\mathfrak{p}}}^{\square}$ corresponding to potentially semi-stable lifts of weight $\sigma_{alg}$ (i.e. of weight $\mathbf{v}$) and inertial type $\tau$. This ring was constructed in \cite{MR2373358}. Moreover there is a \textquotedblleft universal admissible filtered $(\varphi,N)$-module" $D^{\square}_{\overline{r}}(\tau, \mathbf{v})$ which is a locally free $R^{\square}_{\overline{r}}(\tau, \mathbf{v})[1/p]\otimes_{\Qp}F_{0}$-module of rank $n$, where $F_0=W(k_F)[1/p]$, and $W(k_F)$ are the Witt vectors of the residue field of $F$. The module $D^{\square}_{\overline{r}}(\tau, \mathbf{v})$ comes equipped with a universal Frobenius, denoted by $\varphi$.

Let $\sigma_{alg}$ the restriction to $K$ of $\pi_{alg}(r)$. Define $\sigma_{min}:= \sigma_{min}(\lambda)\otimes \sigma_{alg}$ and $\mathcal{H}(\sigma_{min}):= \mathrm{End}_{G}(\mathrm{c\text{--} Ind}_{K}^{G} \sigma_{min})$.

We have fixed a type $\tau$, so there is a finite extension $L$ of $F$ such that the restriction of every Galois representation $r_x$ to $G_L$ is semi-stable. Let $L_0$ its maximal unramified subfield of $L$. We assume $[L_0: \Qp] = |\mathrm{Hom}_{\Qp}(L_0,E)|$ and we let $p^{f}$ be the cardinality of the residue field of $L_0$. By universal we mean that the specialization $D_x$ of $D^{\square}_{\overline{r}}(\tau, \mathbf{v})$ at the closed point $x$ of $R_{\tilde{\mathfrak{p}}}^{\square}(\sigma_{min})[1/p]$ with residue field $E_x$ is an admissible filtered $(\varphi,N, \mathrm{Gal}(L/F))$-module attached to the Galois representation $r_x$ given by the point $x$. 

On the other hand one may show using the classical local Langlands correspondence that $\tau$ determines a Bernstein component $\mathcal{R}^{\Omega}(G)$. We prove that there is a map that interpolates the local Langlands correspondence, more precisely:

\begin{thm}
There is an $E$-algebra homomorphism
\[\beta: \mathcal{H}(\sigma_{min}) \longrightarrow R_{\tilde{\mathfrak{p}}}^{\square}(\sigma_{min})[1/p]\]
\noindent such that for any closed point $x$ of $R_{\tilde{\mathfrak{p}}}^{\square}(\sigma_{min})[1/p]$ with residue field $E_x$, the action of $\mathfrak{Z}_{\Omega}$ on the smooth $G$-representation $\pi_{sm}(r_x)$ factors as $\beta$ composed with the evaluation map  $R_{\tilde{\mathfrak{p}}}^{\square}(\sigma_{min})[1/p] \longrightarrow E_x$.
\end{thm}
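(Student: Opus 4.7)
The plan is to follow the strategy of the potentially crystalline case in \cite{MR3529394}, adapted to allow a non-trivial monodromy operator. The core idea is to produce, from the universal filtered $(\varphi,N,\mathrm{Gal}(L/F))$-module $D^{\square}_{\overline{r}}(\tau,\mathbf{v})$, a ``universal Weil--Deligne representation'' over $R^{\square}_{\overline{r}}(\tau,\mathbf{v})[1/p]$ interpolating the fibrewise representations $WD(r_x)$, and then to translate this via $r_p$ and the theory of Bushnell--Kutzko types into an algebra map from $\mathcal{H}(\sigma_{min})$.

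Concretely, I would first forget the filtration on $D^{\square}_{\overline{r}}(\tau,\mathbf{v})$ and retain only $\varphi$, $N$, and the $\mathrm{Gal}(L/F)$-action. Using the hypothesis $[L_0:\Qp]=|\mathrm{Hom}_{\Qp}(L_0,E)|$, one decomposes along the embeddings $L_0\hookrightarrow E$ and isolates a locally free rank-$n$ summand $V^{\mathrm{univ}}$ over $R^{\square}_{\overline{r}}(\tau,\mathbf{v})[1/p]$ carrying commuting actions of Frobenius, monodromy, and a finite quotient of the Weil group; the specialization at each closed point $x$ recovers $WD(r_x)$ up to Frobenius semi-simplification, and the inertial restriction is globally isomorphic to $\tau$. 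The coefficients of the characteristic polynomial of $\varphi$ acting on the appropriate isotypic blocks of $V^{\mathrm{univ}}$ are then distinguished elements of $R^{\square}_{\overline{r}}(\tau,\mathbf{v})[1/p]$.

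On the smooth side, $\tau$ determines a Bernstein component $\Omega=[M,\rho]_G$ with $M=\prod_i GL_{n_i}(F)$ the Levi encoding $\tau$ together with the Jordan structure forced by $N$. By \cite{MR1643417,MR1711578}, $\mathcal{H}(G,\lambda)$ is a tensor product of affine Hecke algebras of type A and its centre $\mathfrak{Z}_\Omega$ is generated by the elementary symmetric polynomials in the Bernstein parameters on each factor, which via $r_p$ correspond exactly to the Frobenius characteristic-polynomial coefficients on $V^{\mathrm{univ}}$. Using the typical-representation analysis of \cite{Pyv1,Pyv2}, which supplies a canonical map $\mathfrak{Z}_\Omega \to \mathcal{H}(\sigma_{min})$ and a workable presentation of $\mathcal{H}(\sigma_{min})$ itself, one defines $\beta$ by sending each such generator to the corresponding element of $R^{\square}_{\overline{r}}(\tau,\mathbf{v})[1/p]$ and extending to the remaining generators of $\mathcal{H}(\sigma_{min})$ through that presentation.

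Compatibility at each closed point $x$ is then essentially tautological, since the action of $\mathfrak{Z}_\Omega$ on any irreducible in $\mathcal{R}^\Omega(G)$ is through the symmetric functions of its Satake-type parameters, which by $r_p$ are exactly the Frobenius-eigenvalue data on $WD(r_x)^{F-ss}$. The main obstacle I anticipate is the universal tracking of the monodromy operator $N$: it forces $M$ to be strictly smaller than in the crystalline case, it realises $\pi_{sm}(r_x)$ as a full parabolic induction whose Bernstein--Zelevinsky segments are dictated by the Jordan blocks of $N$, and it must be transported in families from the $(\varphi,N)$-module side to the Weil-group side before local Langlands can be invoked. Well-definedness of the extension to all of $\mathcal{H}(\sigma_{min})$, and its independence of the choices made in the presentation, are then forced by the reducedness and $p$-torsion freeness of $R^{\square}_{\overline{r}}(\tau,\mathbf{v})[1/p]$ together with the density of its closed points.
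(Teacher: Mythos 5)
Your overall direction --- pass from the universal $(\varphi,N)$-module to $\mathfrak{Z}_\Omega$ via Frobenius data and pin things down by evaluating at closed points, using reducedness and density --- is the same as the paper's, but there is one genuine conceptual error and one missing technical device.

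The conceptual error concerns the monodromy $N$. You write that $N$ ``forces $M$ to be strictly smaller than in the crystalline case'' and produces a Levi ``encoding $\tau$ together with the Jordan structure forced by $N$.'' This is wrong: the Levi $M$ in the inertial class $\Omega=[M,\rho]_G$ is the Levi of the \emph{supercuspidal support}, and it is determined by $\tau$ alone via the inertial local Langlands correspondence. For fixed $\tau$, all the representations $\pi_{sm}(r_x)$, for every Jordan type of $N$, lie in the \emph{same} Bernstein component $\mathcal{R}^{\Omega}(G)$ (e.g.\ in the Iwahori case both unramified principal series and Steinberg lie in $[T,1]_G$). The shape of $N$ controls only the segment lengths of $\pi_{sm}(r_x)$ in the Bernstein--Zelevinsky classification, i.e.\ which point of the component one is at and which $\sigma_{\mathcal{P}}$'s it contains, not the component or the centre. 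The action of $\mathfrak{Z}_\Omega$ on $\pi_{sm}(r_x)$ is therefore blind to $N$, and the map $\beta$ depends only on $\varphi$ --- the paper states this explicitly after step 5 of its outline. Your plan of building a universal Weil--Deligne representation carrying $N$ and ``transporting'' it across before invoking local Langlands is doing more than is needed, and worse, your underlying picture of a Levi varying with the Jordan type would make it impossible to define a single $\beta$ over the whole deformation ring; the role of $N$ is dealt with entirely in the later stratification by partition-valued functions, not here.

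The missing device is Chenevier's pseudo-representation $T : W_F \to \mathfrak{Z}_\Omega$. The paper sets $\mathrm{Tr}(w) = \frac{1}{[L_0:\Qp]}\mathrm{Tr}\bigl(\overline{w}\circ\varphi^{-\alpha(w)}\mid D^{\square}_{\overline{r}}(\tau,\mathbf{v})\bigr)$, checks pointwise that $\gamma_G\circ T = \gamma_{WD}\circ\mathrm{Tr}$, and then uses (i) that the image of $T$ generates $\mathfrak{Z}_\Omega$ over $E$ (Lemma~4.5 of \cite{MR3529394}) and (ii) injectivity of $\gamma_{WD}$ (the ring is reduced and Jacobson) to produce a well-defined ring homomorphism $I$; twisting by $t_W$ and composing with $\mathfrak{Z}_\Omega\simeq\mathcal{H}(\sigma_{min}(\lambda))\simeq\mathcal{H}(\sigma_{min})$ yields $\beta$. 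Your explicit-generator approach --- matching symmetric polynomials of Bernstein parameters to characteristic-polynomial coefficients of $\varphi$ --- is precisely the paper's computation in the Iwahori case (\S\ref{LA.2.2}), and your appeal to reducedness and dense closed points is the right instinct for well-definedness. But for a general Bushnell--Kutzko type you have not explained why the Frobenius trace data generates $\mathfrak{Z}_\Omega$, nor supplied an explicit presentation of $\mathcal{H}(\sigma_{min})$ through which to extend; the pseudo-representation is exactly the tool that makes both issues disappear, and without it or some substitute the extension step is unjustified.
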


This result generalizes Theorem 4.1 \cite{MR3529394} (i.e. if we restrict to the crystalline locus the two maps coincide), however the proof does not follow methods of this paper. Instead we give an explicit construction of this map.

We will sketch the construction of $\beta$ in the Iwahori case, in this case $L=F$ because the lifts we consider are semi-stable.  By Satake isomorphism and Corollary 7.2 \cite{Pyv1}, we have $\mathcal{H}(\sigma_{\min}) \simeq E[\theta_1,\ldots,\theta_{n-1},(\theta_{n})^{\pm 1}]$, where $\theta_r$ is a double coset operator $ \left[K \begin{pmatrix}
 \varpi I_{r}& 0 \\ 
0 & I_{n-r}
\end{pmatrix} K \right] $. All the details of this construction will be discussed in the section \ref{LA.2.2}.

If for an embedding $\kappa$ the Hodge-Tate weights are $i_{\kappa,1}<\ldots<i_{\kappa,n}$ define $\xi_{\kappa,j}=-i_{\kappa,j} + (j-1)$. Then the map $\beta:\mathcal{H}(\sigma_{min}) \longrightarrow R^{\square}_{\overline{r}}(\tau, \mathbf{v})[1/p]$ is given by the assignment 
$$\theta_r \mapsto \varpi^{-\sum_{\kappa}\sum\limits_{i=r}^{n}\xi_{\kappa,j}} q^{\frac{r(1-r)}{2}}\mathrm{Tr}(\bigwedge^{r} \varphi^f)$$

For $\mathcal{P}$ any partition-valued function, define $\sigma_{\mathcal{P}} := \sigma_{\mathcal{P}}(\lambda) \otimes \sigma_{alg}$,  where $\sigma_{\mathcal{P}}(\lambda)$ was defined above and $\sigma_{alg}$ is the restriction to $K$ of an irreducible algebraic representation of $\mathrm{Res}_{F/\Qp}GL_n$ given by the Hodge-Tate weights.  Fix a $K$-stable $\mathcal{O}$-lattice $\sigma_{\mathcal{P}}^{\circ}$ in $\sigma_{\mathcal{P}}$. Set
\[M_{\infty}(\sigma_{\mathcal{P}}^{\circ}):= \left( \mathrm{Hom}_{\mathcal{O}[[K]]}^{cont}(M_{\infty}, (\sigma_{\mathcal{P}}^{\circ})^{d})\right) ^{d}\]
\noindent where $M_{\infty}$ is $R_{\infty}$-module constructed in section 2 \cite{MR3529394} by patching process and $(.)^d = \mathrm{Hom}_{\mathcal{O}}^{cont}(., \mathcal{O})$ denotes the Shikhof dual. Since $\sigma_{\mathcal{P}}^{\circ}$ is a free $\mathcal{O}$-module of finite rank, it follows from the proof of Theorem 1.2 of \cite{MR1900706} that Schikhof duality induces an isomorphism
\[\mathrm{Hom}_{\mathcal{O}[[K]]}^{cont}(M_{\infty}, (\sigma_{\mathcal{P}}^{\circ})^{d}) \simeq \mathrm{Hom}_{K}(\sigma_{\mathcal{P}}^{\circ},(M_{\infty})^{d})\]

\noindent and Frobenius reciprocity gives $$\mathrm{Hom}_{K}(\sigma_{\mathcal{P}}^{\circ},(M_{\infty})^{d})\simeq \mathrm{Hom}_{G}(\mathrm{c\text{--} Ind}_{K}^{G} \sigma_{\mathcal{P}}^{\circ},(M_{\infty})^{d}).$$
The action of $\mathfrak{Z}_{\Omega}$ on $\mathrm{c\text{--} Ind}_{K}^{G} \sigma_{\mathcal{P}}$ induces an action on $M_{\infty}(\sigma_{\mathcal{P}}^{\circ})[1/p]$.

To any closed point of $x \in \Spm R^{\square}_{\tilde{\mathfrak{p}}}(\sigma_{min})[1/p]$ we can attach a partition-valued function $\mathcal{P}_x$, which encodes information about the shape of monodromy operator of the admissible filtered $(\varphi,N)$-module $D_x$. We prove that there is a reduced $p$-torsion free quotient $R^{\square}_{\tilde{\mathfrak{p}}}(\sigma_{\mathcal{P}})$ of $R^{\square}_{\tilde{\mathfrak{p}}}(\sigma_{min})$, such that $x \in \Spm R^{\square}_{\tilde{\mathfrak{p}}}(\sigma_{\mathcal{P}})[1/p]$ if and only if $\mathcal{P}_x \geq \mathcal{P}$. When $\sigma_{\mathcal{P}}= \sigma_{min}$, the ring corresponds to all the potentially semi-stable lifts and this is compatible with the notation introduced at the beginning. The other extreme case is $R^{\square}_{\tilde{\mathfrak{p}}}(\sigma_{max})$; this ring parametrizes all the potentially crystalline lifts.

As a part of patching construction we know that $R_{\infty}$ is an $R_{\tilde{\mathfrak{p}}}^{\square}$-algebra. We define $R_{\infty}(\sigma_{\mathcal{P}})':=R_{\infty}\otimes_{R_{\tilde{\mathfrak{p}}}^{\square}}R_{\tilde{\mathfrak{p}}}^{\square}(\sigma_{\mathcal{P}})$. Let $R_{\infty}(\sigma_{\mathcal{P}})$ be the quotient of $R_\infty$ which acts faithfully on $M_{\infty}(\sigma_{\mathcal{P}}^{\circ})$. The usual commutative algebra arguments underlying the Taylor-Wiles-Kisin method, as in \cite{MR3529394}, show that $M_{\infty}(\sigma_{\mathcal{P}}^{\circ})$ is a maximal Cohen-Macaulay module over $R_{\infty}(\sigma_{\mathcal{P}})$. Moreover we prove an important result about the support of $M_{\infty}(\sigma_{min}^{\circ})$:

\begin{prop}\label{p2}
\begin{enumerate}
\item The module $M_{\infty}(\sigma_{min}^{\circ})[1/p]$ is locally free of rank one over the regular locus of $ \Sp R_{\infty}(\sigma_{min})[1/p]$.
\item $\Sp R_{\infty}(\sigma_{min}) [1/p]$ is a union of irreducible components of \\$\Sp R_{\infty}(\sigma_{min})'[1/p]$.
\end{enumerate}
\end{prop}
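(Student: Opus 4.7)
The plan is to follow the template of Proposition~2.22 of \cite{MR3529394}, which is the analogous statement in the potentially crystalline setting, and to check that each step carries over when the potentially crystalline lifting ring is replaced by $R^\square_{\tilde{\mathfrak{p}}}(\sigma_{min})$. Three ingredients will drive the argument: the maximal Cohen--Macaulay property of $M_\infty(\sigma_{min}^\circ)$ over $R_\infty(\sigma_{min})$ recorded just above the statement; the equidimensionality of $R^\square_{\tilde{\mathfrak{p}}}(\sigma_{min})[1/p]$ of the expected dimension, extracted from Kisin's work on potentially semi-stable deformation rings; and a type-theoretic multiplicity-one statement computing the rank at one well-chosen smooth closed point.

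For part~(2), I would first observe that every closed point in the support of $M_\infty(\sigma_{min}^\circ)[1/p]$ parametrises a potentially semi-stable Galois representation of type $\sigma_{min}$, so the action of $R_\infty$ on $M_\infty(\sigma_{min}^\circ)$ factors through $R_\infty(\sigma_{min})'$; hence $R_\infty(\sigma_{min})$ is a quotient of $R_\infty(\sigma_{min})'$. Since the patching construction contributes formal power series variables over the local lifting rings, and $R^\square_{\tilde{\mathfrak{p}}}(\sigma_{min})[1/p]$ is equidimensional of known dimension, $\Sp R_\infty(\sigma_{min})'[1/p]$ is itself equidimensional of some dimension $D$. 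On the other hand, the maximal Cohen--Macaulay property forces every irreducible component of $\Sp R_\infty(\sigma_{min})[1/p]$ to have dimension equal to the depth of $M_\infty(\sigma_{min}^\circ)[1/p]$, which a standard patching calculation (free action of a suitable formal power series ring on $M_\infty$, as in \cite[\S 2]{MR3529394}) identifies with $D$. A closed equidimensional subscheme of equal dimension inside an equidimensional ambient scheme must be a union of irreducible components, giving~(2).

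For part~(1), the Auslander--Buchsbaum formula implies that the maximal Cohen--Macaulay module $M_\infty(\sigma_{min}^\circ)[1/p]$ is locally free over the regular locus of $\Sp R_\infty(\sigma_{min})[1/p]$, so only the rank is left to compute. Since the rank is locally constant on the smooth locus, it suffices to evaluate it at a single well-chosen closed point $x$ corresponding to a Galois representation $r_x$ whose smooth parameter $\pi_{sm}(r_x)$ is irreducible and generic. By the same Schikhof duality and Frobenius reciprocity as in the excerpt, the fibre of $M_\infty(\sigma_{min}^\circ)[1/p]$ at $x$ is identified with $\mathrm{Hom}_K(\sigma_{min}, \pi_{sm}(r_x))$, whose $E_x$-dimension is the multiplicity of $\sigma_{min}(\lambda)$ in $\pi_{sm}(r_x)\vert_K$. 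By the multiplicity-one property of $\sigma_{min}(\lambda)$ in $\mathrm{Ind}_J^K\lambda$ from \cite{MR1728541}, combined with the Whittaker-model input from \cite{Pyv1}, this Hom space is one-dimensional, giving rank one.

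The most delicate point is likely to be the equidimensionality of $R^\square_{\tilde{\mathfrak{p}}}(\sigma_{min})[1/p]$ in the potentially semi-stable setting (a genuine strengthening of what is needed in the potentially crystalline case of \cite{MR3529394}), together with the precise matching of dimensions of $R_\infty(\sigma_{min})$ and $R_\infty(\sigma_{min})'$; once these are in place, the Cohen--Macaulay machinery and the type-theoretic multiplicity-one input combine cleanly to give both parts.
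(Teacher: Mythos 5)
Your strategy for part (2) — use the maximal Cohen--Macaulay property, equidimensionality of the potentially semi-stable deformation ring, and the fact that the support of a faithful MCM module over an equidimensional Noetherian ring has all components of full dimension — is exactly the one the paper invokes (it reduces to Lemma 4.18 of \cite{MR3529394}), and your argument for local freeness in part (1) via Auslander--Buchsbaum also matches. So far so good.

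However, your rank-one computation has a genuine gap, and it is precisely the delicate point where the paper does something you have omitted. You write that the fibre of $M_\infty(\sigma_{min}^\circ)[1/p]$ at a ``well-chosen'' closed point $x$ is $\mathrm{Hom}_K(\sigma_{min},\pi_{sm}(r_x))$, but what Proposition 2.22 of \cite{MR3306557} actually gives is an identification with $\mathrm{Hom}_K(\sigma_{min},V(r_x)^{l.alg})$. The passage from $V(r_x)^{l.alg}$ to $\pi_{sm}(r_x)\otimes\pi_{alg}(r_x)$ is exactly the content of the main theorem (Theorem~\ref{t2}/\ref{4.31}), whose proof depends on the present proposition — so at an arbitrary smooth point the identification you want is circular. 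The paper circumvents this by choosing the evaluation point $y$ to lie in $\mathrm{Supp}\,M_\infty(\sigma_{min}^\circ)\cap V(\mathfrak{a})$, i.e.\ a point coming from an actual algebraic automorphic form, where $V(r_y)^{l.alg}\simeq\pi_{sm}(r_y)\otimes\pi_{alg}(r_y)$ follows directly from classical local--global compatibility (Proposition~\ref{4.26}); moreover it must justify that such a point exists on \emph{every} irreducible component (via $V(\mathfrak{p})\cap V(\mathfrak{a})\neq\emptyset$ and the system-of-parameters/Lemma~3.9 \cite{MR3544298} argument) and that this $y$ is smooth (again Proposition~\ref{4.26}, via Theorem 1.2.7 of \cite{MR3546966}), since rank is only locally constant and could a priori differ between components. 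Without these two ingredients — the restriction to the global locus $V(\mathfrak{a})$, and the existence of such a global smooth point on each component — your computation of the local rank is not justified. Finally, the one-dimensionality is cited in the paper as Lemma~3.2 of \cite{Pyv2}, a statement about $\mathrm{Hom}_K(\sigma_{min}(\lambda),\pi)$ for $\pi$ irreducible generic; multiplicity one of $\sigma_{min}(\lambda)$ inside $\mathrm{Ind}_J^K\lambda$ from \cite{MR1728541} alone is not what is needed here, since $\pi|_K$ is not $\mathrm{Ind}_J^K\lambda$.
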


The components appearing in the second statement of the Proposition \ref{p2} are termed \textit{automorphic components}. The proof of the proposition above is similar to Lemma 4.18 \cite{MR3529394}. The action of $\mathfrak{Z}_{\Omega}$ on $M_{\infty}(\sigma_{min}^{\circ})[1/p]$ induces an $E$-algebra homomorphism: 
$$\alpha : \mathfrak{Z}_{\Omega} \longrightarrow \mathrm{End}_{R_{\infty}[1/p]}(M_{\infty}(\sigma_{min}^{\circ})[1/p])$$
\noindent From the Proposition \ref{p2}, we deduce that:

\begin{thm}
We have the following commutative diagram:
$$\xymatrix{
(\Sp R_{\infty}(\sigma_{min}) [1/p])^{reg} \ar@{^{(}->}[d] \ar[r]^-{\alpha^{\sharp}} & \Sp \mathcal{H}(\sigma_{min}) \\
\Sp R_{\infty}(\sigma_{min}) [1/p] \ar[r]^{can} &\Sp R_{\tilde{\mathfrak{p}}}^{\square}(\sigma_{min})[1/p], \ar[u] }$$

\noindent where $(\Sp R_{\infty}(\sigma_{min}) [1/p])^{reg}$ is the regular locus of $\Sp R_{\infty}(\sigma_{min}) [1/p]$ and $\alpha^{\sharp}$ the map induced by $\alpha$.
\end{thm}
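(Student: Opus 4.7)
The plan is to reduce the commutativity of the diagram to a pointwise check on closed points of the regular locus, using Proposition~\ref{p2}(1) to trivialize the endomorphism ring on the top-left corner and the previous theorem on $\beta$ to identify the two resulting characters of $\mathcal{H}(\sigma_{min})$.

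First I would make $\alpha^{\sharp}$ explicit. The right-action of $\mathcal{H}(\sigma_{min}) = \mathrm{End}_G(\cI_K^G\sigma_{min})$ on $\cI_K^G\sigma_{min}$ dualizes, via the Shikhof and Frobenius identifications recorded above, to an $R_\infty[1/p]$-linear action on $M_\infty(\sigma_{min}^\circ)[1/p]$ which extends the $\mathfrak{Z}_\Omega$-action $\alpha$ along the natural map $\mathfrak{Z}_\Omega\to\mathcal{H}(\sigma_{min})$ (the latter coming from the fact that $\cI_K^G\sigma_{min}$ lies in $\mathcal{R}^\Omega(G)$ by typicality of $\sigma_{min}(\lambda)$). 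Over the regular locus of $\Sp R_\infty(\sigma_{min})[1/p]$, Proposition~\ref{p2}(1) says that $M_\infty(\sigma_{min}^\circ)[1/p]$ is locally free of rank one, so its endomorphism sheaf is canonically $R_\infty(\sigma_{min})[1/p]$ itself, and the action therefore factors through an $E$-algebra homomorphism
\[\tilde\alpha\colon \mathcal{H}(\sigma_{min})\longrightarrow R_\infty(\sigma_{min})[1/p]_{\mathrm{reg}}\]
whose $\Sp$ is the top arrow $\alpha^{\sharp}$. In particular $\alpha^{\sharp}$ does take values in $\Sp\mathcal{H}(\sigma_{min})$ as the diagram asserts.

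It now suffices to verify commutativity on closed points, since closed points are Zariski-dense in the regular locus of this finite-type $E$-scheme. Fix a closed point $x$ in the regular locus with residue field $E_x$; I need the two characters $\mathrm{ev}_x\circ\tilde\alpha$ and $\mathrm{ev}_{can(x)}\circ\beta$ of $\mathcal{H}(\sigma_{min})$ to agree. By construction $\mathrm{ev}_x\circ\tilde\alpha$ is the character through which $\mathcal{H}(\sigma_{min})$ acts on the one-dimensional fiber $M_\infty(\sigma_{min}^\circ)[1/p]\otimes_{R_\infty}E_x$, which by the patching setup together with Frobenius reciprocity is isomorphic as a $\mathcal{H}(\sigma_{min})$-module to $\mathrm{Hom}_K\bigl(\sigma_{min},\pi_{sm}(r_x)\otimes\pi_{alg}(r_x)\bigr)$. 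By typicality of $\sigma_{min}(\lambda)$ this is the pullback, along $\mathfrak{Z}_\Omega\to\mathcal{H}(\sigma_{min})$, of the Bernstein-centre character by which $\mathfrak{Z}_\Omega$ acts on $\pi_{sm}(r_x)$. The previous theorem on $\beta$ identifies the latter character with $\mathrm{ev}_{can(x)}\circ\beta$, yielding the required equality.

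The main obstacle is the identification in the previous paragraph: that the fiber of $M_\infty(\sigma_{min}^\circ)[1/p]$ at a regular closed point $x$ realizes the $\sigma_{min}(\lambda)$-isotypic component of the smooth representation attached to $x$, together with its $\mathcal{H}(\sigma_{min})$-action. In the potentially crystalline case this is essentially the content of Lemma~4.18 of \cite{MR3529394}; in the potentially semi-stable setting treated here one must additionally control the effect of the partition-valued function $\mathcal{P}_x$ encoding the shape of the monodromy operator on $D_x$, and verify that the automorphic components singled out by Proposition~\ref{p2}(2) are exactly those on which $\pi_{sm}(r_x)$ is generic, so that the isotypic component is one-dimensional and yields a well-defined character of $\mathcal{H}(\sigma_{min})$.
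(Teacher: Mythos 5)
Your reduction to closed points and your identification of $\alpha^{\sharp}$ via the locally-free-rank-one statement of Proposition~\ref{p2}(1) are both fine as far as they go, but the crucial step — the claim that the fiber $M_\infty(\sigma_{min}^\circ)[1/p]\otimes_{R_\infty}E_x$ is isomorphic \emph{as an $\mathcal{H}(\sigma_{min})$-module} to $\mathrm{Hom}_K(\sigma_{min},\pi_{sm}(r_x)\otimes\pi_{alg}(r_x))$ for an arbitrary regular closed point $x$ — is circular. What patching plus Frobenius reciprocity (Proposition~2.22 of \cite{MR3306557}) actually gives is an identification of this fiber with $\mathrm{Hom}_K(\sigma_{min},V(r_x)^{l.alg})$ (up to duality), where $V(r_x)$ is the Banach space representation defined by \eqref{V}. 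The further identification $V(r_x)^{l.alg}\simeq\pi_{sm}(r_x)\otimes\pi_{alg}(r_x)$ at a general closed point is precisely the main Theorem~\ref{4.31}, whose proof rests on Lemmas~\ref{4.24} and~\ref{4.25}, which in turn depend on the theorem you are trying to prove. So the ``main obstacle'' you flag at the end is not a technicality to be filled in; as stated, it presupposes the conclusion of the whole paper.

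The paper's actual argument sidesteps this entirely by working integrally rather than pointwise over the regular locus. Lemma~\ref{4.8} shows directly, as an equality of operators on the patched module $M_\infty(\sigma_{min}^\circ)$, that the action of $z\in\mathcal{H}(\sigma_{min}^\circ)$ coincides with multiplication by $\beta(z)$ whenever $\beta(z)$ lands in $R_{\tilde{\mathfrak{p}}}^\square(\sigma_{min})$. This is a consequence of \emph{classical} local-global compatibility at $\tilde{\mathfrak{p}}$ applied throughout the patching tower (the argument of Lemma~4.17(2) of \cite{MR3529394}), and therefore does not require knowing the locally algebraic vectors of $V(r_x)$ at any particular point. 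Theorem~\ref{4.10} then follows by a short density/torsion-freeness argument: $\mathcal{H}(\sigma_{min}^\circ)$ spans $\mathcal{H}(\sigma_{min})$ over $E$, and any $z$ in the integral Hecke algebra satisfies $\beta(p^Cz)\in R_{\tilde{\mathfrak{p}}}^\square(\sigma_{min})$ for $C\gg0$. If you want to salvage a pointwise argument in your spirit, you would have to restrict attention to closed points in $V(\mathfrak{a})$, where Proposition~\ref{4.26} gives the required identification of $V(r_y)^{l.alg}$ independently of Theorem~\ref{4.10}, and then argue that these points are dense in every automorphic component — but that is, in substance, the same classical local-global input that Lemma~\ref{4.8} packages once and for all.
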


Just as in \S 4.28 \cite{MR3529394}, the main technique is to convert information on locally algebraic vectors in the completed cohomology into commutative algebra statements about the module $M_{\infty}(\sigma_{min}^{\circ})$ using results on $K$-typical representations that we have explained in the previous section.

Let $x$ be a closed $E$-valued point of $\Sp R_{\infty} (\sigma_{min})[1/p]$. The corresponding Galois representation $r_x$ is given by the homomorphism $x : R_{\tilde{\mathfrak{p}}}^{\square} \rightarrow \mathcal{O}$, which we extend arbitrarily to homomorphism $x: R_\infty \rightarrow \mathcal{O}$. Then 

\begin{equation}\label{V}
V(r_{x}):=\mathrm{Hom}_{\mathcal{O}}^{cont}(M_{\infty} \otimes_{R_{\infty},x}\mathcal{O},E)
\end{equation}

\noindent is an admissible unitary $E$-Banach space representation of $G$. The main result is the following theorem:

\begin{thm}\label{t2}
Let $x$ be a closed $E$-valued point of $\Sp R_{\infty} (\sigma_{min})[1/p]$, such that  $\pi_{sm}(r_{x})$ is generic and irreducible. Then
$$V(r_{x})^{l.alg} \simeq \pi_{sm}(r_x) \otimes \pi_{alg}(r_x),$$
where $(.)^{l.alg}$ denotes the subspace of locally algebraic vectors. 
\end{thm}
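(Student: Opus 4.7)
The plan parallels the strategy used in \S 4.28 of \cite{MR3529394}, adapted to the semi-stable setting by working with the partition-valued typical representations $\sigma_\mathcal{P}(\lambda)$ of \cite{MR1728541} in place of the Schneider--Zink $K$-types used in the crystalline case. Since any smooth irreducible representation $\pi$ in the Bernstein component $\Omega = [M,\rho]_G$ is determined up to isomorphism by the collection $\{\mathrm{Hom}_K(\sigma_\mathcal{P}(\lambda),\pi)\}_\mathcal{P}$ together with the $\mathfrak{Z}_\Omega$-action on these spaces (via the Bushnell--Kutzko equivalence $\mathfrak{M}_\lambda$), the theorem reduces to identifying, for each partition-valued function $\mathcal{P}$, the isotypic piece $\mathrm{Hom}_K(\sigma_\mathcal{P}(\lambda)\otimes\sigma_{alg}, V(r_x)^{l.alg})$ with $\mathrm{Hom}_K(\sigma_\mathcal{P}(\lambda), \pi_{sm}(r_x))$ as $\mathfrak{Z}_\Omega$-modules, and then using that $\pi_{alg}(r_x)$ is the unique algebraic factor of the correct highest weight.

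The patching side proceeds by the duality recalled just before Proposition \ref{p2}: combining Schikhof duality with Frobenius reciprocity yields a canonical identification
\[
\mathrm{Hom}_K(\sigma_\mathcal{P}^\circ, V(r_x))[1/p] \;\cong\; \bigl(M_\infty(\sigma_\mathcal{P}^\circ)\otimes_{R_\infty,x}\mathcal{O}\bigr)[1/p].
\]
By the extension of Proposition \ref{p2} to general $\mathcal{P}$, the module $M_\infty(\sigma_\mathcal{P}^\circ)[1/p]$ is locally free of rank one on the regular locus of $\Sp R_\infty(\sigma_\mathcal{P})[1/p]$. The genericity and irreducibility hypothesis on $\pi_{sm}(r_x)$ should force $x$ into this regular locus whenever $\mathcal{P}\le\mathcal{P}_x$, while the support condition built into the quotient $R^\square_{\tilde{\mathfrak{p}}}(\sigma_\mathcal{P})$ ensures that the fibre vanishes for $\mathcal{P}\not\le\mathcal{P}_x$. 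This yields
\[
\dim_E \mathrm{Hom}_K(\sigma_\mathcal{P}(\lambda)\otimes\sigma_{alg},V(r_x)^{l.alg}) = \begin{cases} 1 & \text{if } \mathcal{P}\le \mathcal{P}_x,\\ 0 & \text{otherwise,} \end{cases}
\]
matching the multiplicities of the typical $K$-types of $\pi_{sm}(r_x)$ predicted by \cite{MR1728541}.

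To upgrade the equality of dimensions into a $\mathfrak{Z}_\Omega$-equivariant identification, I invoke the interpolation map $\beta\colon \mathcal{H}(\sigma_{min}) \to R^\square_{\tilde{\mathfrak{p}}}(\sigma_{min})[1/p]$ introduced above: by construction of the $R_\infty$-module structure on $M_\infty(\sigma_{min}^\circ)$, together with the commutative diagram displayed just after Proposition \ref{p2}, the action of $\mathfrak{Z}_\Omega$ on the one-dimensional $E$-line $\mathrm{Hom}_K(\sigma_{min}, V(r_x))[1/p]$ factors as $\beta$ composed with evaluation at the point $x$. The explicit formula for $\beta$ sketched in section \ref{LA.2.2} shows that this character coincides with the action of the Bernstein centre on $\pi_{sm}(r_x)$, so via the equivalence $\mathfrak{M}_\lambda$ the smooth factor of $V(r_x)^{l.alg}$ is identified with $\pi_{sm}(r_x)$, and then the algebraic factor is forced to be $\pi_{alg}(r_x)$ since $V(r_x)^{l.alg}$ is built from lifts of Hodge--Tate weight $\mathbf{v}$.

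The main obstacle I anticipate is the commutative-algebra input: extending Proposition \ref{p2} from $\sigma_{min}$ to general $\sigma_\mathcal{P}$, and verifying that the genericity hypothesis really places $x$ in the regular locus of each relevant $\Sp R_\infty(\sigma_\mathcal{P})[1/p]$. A secondary technical point is the exhaustion statement, namely that no $K$-type outside the family $\sigma_\mathcal{P}(\lambda)\otimes\sigma_{alg}$ contributes to $V(r_x)^{l.alg}$; this should follow because $M_\infty$ is supported, as a module over the framed deformation ring, on potentially semi-stable lifts of fixed inertial type $\tau$, so every smooth subquotient of $V(r_x)^{l.alg}$ lies in the Bernstein component $\Omega$ and is therefore captured by these typical representations.
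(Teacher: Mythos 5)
Your proposal identifies the right ingredients---the patching functor $M_\infty(\cdot)$, the typical $K$-types $\sigma_{\mathcal{P}}$, the interpolation map $\beta$, and the Bushnell--Kutzko equivalence---but the two steps you flag as your anticipated obstacles are in fact genuine gaps, and the paper circumvents both by a mechanism your proposal does not have.

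The proposed ``extension of Proposition~\ref{p2} to general $\mathcal{P}$'' (that $M_\infty(\sigma_{\mathcal{P}}^{\circ})[1/p]$ is locally free of rank \emph{one} on the regular locus of $\Sp R_\infty(\sigma_{\mathcal{P}})[1/p]$) is false. Local freeness on the regular locus does hold by the Auslander--Buchsbaum argument, but the rank-one statement is specific to $\sigma_{min}$: it comes from the fact that $\dim_E\mathrm{Hom}_K(\sigma_{min}(\lambda),\pi)=1$ for any generic $\pi$ in $\Omega$ (Lemma~3.2 of~\cite{Pyv2}). For intermediate $\sigma_{\mathcal{P}}$ the multiplicities are not $0$ or $1$: already in the Iwahori case for $GL_3$, the non-extremal unipotent $K$-type occurs with multiplicity $2$ in an irreducible unramified generic principal series. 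Consequently your proposed dimension formula ($1$ if $\mathcal{P}\le\mathcal{P}_x$, $0$ otherwise) cannot hold. Separately, smoothness of $x$ in $\Sp R_\infty(\sigma_{min})[1/p]$ does not imply that $x$ is a regular point of the closed subscheme $\Sp R_\infty(\sigma_{\mathcal{P}})[1/p]$; these are different rings, and $x$ lies on the boundary of the monodromy stratum $\{\mathcal{P}_{\bullet}\ge\mathcal{P}\}$ whenever $\mathcal{P}<\mathcal{P}_x$, which is precisely where one expects singularities.

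The paper's proof never computes the fibre dimensions at $x$ directly. Instead it finds a second smooth closed point $y\in V(\mathfrak{a})$ on the same irreducible component of $\Sp R_\infty(\sigma_{min})[1/p]$; at $y$ the answer $V(r_y)^{l.alg}\simeq\pi_{sm}(r_y)\otimes\pi_{alg}(r_y)$ is known from classical local--global compatibility (Proposition~\ref{4.26}). Because $x$ and $y$ are both smooth, the semicontinuity estimate of Proposition~\ref{4.27} applies in both directions and yields equality of $\dim_E\mathrm{Hom}_K(\sigma_{\mathcal{P}},V(r_{\bullet})^{l.alg})$ at $x$ and at $y$ for every $\mathcal{P}$. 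Feeding in $\mathcal{P}=\mathcal{P}_x$ and $\mathcal{P}=\mathcal{P}_y$ together with Lemma~\ref{4.29} and Proposition~\ref{4.30} forces $\mathcal{P}_x=\mathcal{P}_y$, after which Lemma~\ref{4.28} gives $\pi_{sm}(r_x)|_J\simeq\pi_{sm}(r_y)|_J$ and hence $\dim_E\mathrm{Hom}_J(\lambda_{sm},\pi_{sm}(r_x))=\dim_E\mathrm{Hom}_J(\lambda_{sm},\pi_x)$. The final step combines the a priori fact that $\pi_{sm}(r_x)$ is a $G$-subquotient of $\pi_x$ (Lemma~\ref{4.25}, which is where $\beta$ and the generic-socle argument of Corollary~3.11 of~\cite{MR3529394} actually do their work) with the exactness and equivalence-of-categories property of $\mathrm{Hom}_J(\lambda_{sm},\cdot)$ to force $\pi_{sm}(r_x)\simeq\pi_x$. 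Your proposal invokes $\beta$ only to name a character on a one-dimensional space, and has no analogue of either the subquotient relation or the transport of multiplicities along an automorphic component, which is the heart of the argument.
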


Since  the action of $G$ on $V(r_x)$ is unitary, we obtain:

\begin{coro}
Suppose $p\nmid 2n$, and that $r : G_F \longrightarrow GL_{n}(E)$ is a generic potentially semi-stable Galois representation of regular weight. If $r$ correspond to a closed point $x \in \Sp R_{\infty}(\sigma_{min})[1/p]$, then $\pi_{sm}(r)\otimes \pi_{alg}(r)$ admits a non-zero unitary admissible Banach completion.
\end{coro}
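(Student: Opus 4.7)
The plan is to deduce the corollary directly from Theorem \ref{t2}, using that the patched Banach representation $V(r_x)$ from \eqref{V} is admissible and unitary by construction.

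Since the hypotheses of the corollary match those of Theorem \ref{t2} (the genericity of $r$ is the genericity of $\pi_{sm}(r)$, and the regularity of the Hodge--Tate weights together with genericity makes $\pi_{sm}(r)\otimes\pi_{alg}(r)$ an irreducible locally algebraic $G$-representation), Theorem \ref{t2} gives an isomorphism of locally algebraic $G$-representations
\[V(r_x)^{l.alg}\;\simeq\;\pi_{sm}(r)\otimes_E\pi_{alg}(r).\]
Composing with the tautological inclusion $V(r_x)^{l.alg}\hookrightarrow V(r_x)$ yields a $G$-equivariant injection of $\pi_{sm}(r)\otimes_E\pi_{alg}(r)$ into the admissible unitary $E$-Banach representation $V(r_x)$. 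Pulling back the unit ball of $V(r_x)$ to this subspace produces a $G$-stable $\mathcal{O}$-lattice in $\pi_{sm}(r)\otimes_E\pi_{alg}(r)$, equivalently a non-zero $G$-invariant norm whose Hausdorff completion is the closure of $\pi_{sm}(r)\otimes_E\pi_{alg}(r)$ inside $V(r_x)$; this closure is a closed $G$-stable subspace of an admissible unitary Banach representation, and therefore is itself a non-zero admissible unitary Banach completion of $\pi_{sm}(r)\otimes_E\pi_{alg}(r)$, as required.

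The only non-trivial input is Theorem \ref{t2} itself, so no substantial obstacle arises at this step: the passage from the existence of locally algebraic vectors to the Banach completion is formal, using only that closed $G$-stable subspaces of admissible unitary Banach $G$-representations inherit admissibility and unitarity. All the actual difficulty sits in proving Theorem \ref{t2}, which rests on the multiplicity analysis of $\mathrm{Hom}_K(\sigma_{\mathcal{P}},V(r_x))$ via the patched fibres $M_\infty(\sigma_{\mathcal{P}}^\circ)\otimes_{R_\infty,x}E$, on Proposition \ref{p2}(1) at a regular point, and on the interpolation map $\beta$ of Theorem 1.1 that encodes the local Langlands correspondence.
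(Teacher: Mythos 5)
Your proof is correct and takes essentially the same route as the paper's: apply Theorem \ref{t2} to identify $\pi_{sm}(r)\otimes_E\pi_{alg}(r)$ with the locally algebraic vectors inside the admissible unitary Banach representation $V(r_x)$, then restrict the norm (equivalently, pull back the unit ball). The paper simply invokes the admissibility and unitarity of $V(r)$ via Proposition 2.13 of \cite{MR3529394} and stops at the existence of the $G$-invariant norm, whereas you add the (correct, routine) remark that the closure of the locally algebraic subspace inherits admissibility and unitarity — a minor amplification, not a different argument.
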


It is conjectured in \cite{MR3529394} that $V(r_x)$ depends only on the Galois representation $r_x$ and that $r_x \mapsto V(r_x)$ realizes the hypothetical $p$-adic local Langlands correspondence. Our Theorem \ref{t2} provides further evidence of this conjecture.

Now, we will recall some more notation from \cite{MR3529394}. The globalization of $\overline{r}$ constructed in section 2.1 \cite{MR3529394}, provides us with a global imaginary CM field $\tilde{F}$ with maximal totally real subfield $\tilde{F}^{+}$. We refer the reader to section 2.1 \cite{MR3529394}, for more details and precise definitions. Let $S_p$ denote the set of primes of $\tilde{F}^{+}$ dividing $p$. Fix $\mathfrak{p}|p$. For each $v \in S_p$, let $\tilde{v}$ be a choice of a place in $\tilde{F}$ lying above $v$, as defined in section 2.4 \cite{MR3529394}.

The main result of this article is to compute the locally algebraic vectors for $V(r_{x})$, a candidate for the $p$-adic local Langlands correspondence, at the smooth points which lie on some automorphic component. 

\subsection{Outline of the paper}

This article is organised as follows: After recalling few definitions in section \ref{LA.1}, we will construct the map $\mathcal{H}(\sigma_{min}) \longrightarrow R_{\tilde{\mathfrak{p}}}^{\square}(\sigma_{min})[1/p]$, which interpolates the local Langlands correspondence in section \ref{LA.2}. Then in section \ref{LA.3} we will introduce a stratification of $R_{\tilde{\mathfrak{p}}}^{\square}(\sigma_{min})$ with respect to the partition-valued function, which will help us to study the support of $M_{\infty}(\sigma_{min}^{\circ})$. The goal of the section \ref{LA.4} will be to prove that the action of $\mathcal{H}(\sigma_{min})$ on $M_{\infty}(\sigma_{min}^{\circ})$ is compatible with the interpolation map constructed in section \ref{LA.2}. In order to deal with the monodromy of potentially semi-stable Galois representations we will use results from \cite{Pyv2}.  This will be stated in more precise manner as Theorem \ref{4.10} and the results about the support of $M_{\infty}(\sigma_{min}^{\circ})$ will be given in section \ref{LA.5}. In the section \ref{LA.6}, the we will compute locally algebraic vectors using a global point where we know the result already. The main result of that section is the Theorem \ref{4.31}. Finally in the section \ref{A.1}, we will apply this theorem to deduce new cases of the Breuil--Schneider conjecture.

\section{Locally algebraic vectors. Definition. First properties}\label{LA.1}

In this section we reproduced some parts of the appendix in \cite{MR1835001}. Let $E/\Qp$ be a finite extension and $V$ a vector space over $E$ and $V$ is a $G$-representation. We begin with a definition, a vector $v\in V$ is termed \textbf{locally algebraic} if:
\medskip

The  orbit map of the vector $v$ is locally algebraic, i.e. for $v \in V$, there is a compact open subgroup $K_v$ in $G$, and a finite dimensional subspace $U$ of $V$ containing the vector $v$ such that $K_v$ leaves $U$ invariant and operates on $U$ via the restriction to $K_v$ of a finite dimensional algebraic representation of the algebraic group scheme $\mathrm{Res}_{F/\Qp}GL_n$. 
\medskip

Similarly a representation $\pi$ of $G=GL_n(F)$ on $V$ is called \textbf{locally algebraic} if  any vector $v \in V$ is locally algebraic. According to \cite[Theorem 1]{MR1835001} every irreducible locally algebraic representation of $G$ is of the form $\pi_{sm} \otimes \pi_{alg}$, where $\pi_{sm}$ is a smooth irreducible representation of $G$ and $\pi_{alg}$ is an irreducible algebraic representation of $G$. For any Banach vector space representation $V$ of $G$ we have the following functor $V \mapsto V^{l.alg}$, where $V^{l.alg}$ is the subspace of locally algebraic vectors in $V$.

\medskip

\textbf{Notation.} Let $\pi$ be an irreducible representation of $G$, then we will write $\pi^{l.alg} = \pi_{sm}\otimes \pi_{alg}$, where $\pi_{sm}$ is a smooth irreducible representation of $G$ and $\pi_{alg}$ is an irreducible algebraic representation of $G$.

\section{Interpolation map}\label{LA.2}

In this section we will construct an analogue of the map $\eta$ from Theorem 4.1 \cite{MR3529394} in the potentially semi-stable case. First we extend a few results from section 3 of \cite{MR3529394}. Let $\pi$ be any irreducible representation, then the action of $\mathfrak{Z}_{\Omega}$ on $\pi$ defines a $E$-algebra morphism $ \chi_{\pi} : \mathfrak{Z}_{\Omega} \longrightarrow \mathrm{End}_{G}(\pi)\simeq E$. In the next Lemma we introduce new notation which will be used in this paper.

\begin{lemma}\label{4.2}
Let $\pi$ be an irreducible generic representation of $G=GL_{n}(F)$. Then by Bernstein-Zelevinsky classification (cf. \cite{MR584084}) there are pairwise non-isomorphic supercuspidal representations $\pi_i$ ($1 \leq i \leq s$) and segments $\Delta_{i,j}=(\pi_i \otimes \chi_{i,j})\otimes \ldots \otimes (\pi_i \otimes \chi_{i,j}\otimes |\det|^{k_{i,j}-1})$ ($1 \leq j \leq r_i$), where $\chi_{i,j}$ are unramified characters and $k_{i,j}$ are positive integers, such that:
\[\pi \simeq Q(\Delta_{1,1})\times \ldots\times Q(\Delta_{1,r_1})\times\ldots \times Q(\Delta_{s,1})\times \ldots \times Q(\Delta_{s,r_s}),\]

\noindent where $Q$ denotes the Langlands quotient (cf. \cite[Section 1.2]{MR1265559}). Notice that all the segments $\Delta_{i,j}$ and $\Delta_{i',j'}$ are not linked for $i \neq i'$, this means that any permutation of blocs $Q(\Delta_{i,1})\times \ldots\times Q(\Delta_{i,r_i})$ gives an isomorphic representation.

Define $\tilde{\Delta}_{i,j}:= (\pi_i \otimes \chi_{i,j}\otimes |\det|^{1-k_{i,j}})\otimes \ldots \otimes (\pi_i \otimes \chi_{i,j})$ and consider it as a representation of a corresponding Levi subgroup. Write,
\[\eta := \tilde{\Delta}_{1,1}\times \ldots\times \tilde{\Delta}_{1,r_1}\times\ldots \times \tilde{\Delta}_{s,1}\times \ldots \times \tilde{\Delta}_{s,r_s}\]
Notice that any permutation of blocs $\tilde{\Delta}_{i,1}\times \ldots\times \tilde{\Delta}_{i,r_i}$ gives a representation isomorphic to $\eta$. Then we have:
\[\mathrm{c\text{--} Ind}_{K}^{G} \sigma_{max}(\lambda) \otimes_{\mathfrak{Z}_{\Omega},\chi_{\pi}} E \simeq \eta\]

\noindent Moreover the action of $\mathfrak{Z}_{\Omega}$ on $\eta$ is given by the maximal ideal $\chi_{\pi}$.
\end{lemma}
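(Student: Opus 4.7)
The plan is to transfer the claim to the Hecke algebra side via the Bushnell-Kutzko equivalence $\mathfrak{M}_\lambda : \mathcal{R}^\Omega(G) \xrightarrow{\sim} \mathcal{H}(G,\lambda)\text{-Mod}$, identify the relevant modules there, and then translate back. Under this equivalence the Bernstein centre $\mathfrak{Z}_\Omega$ corresponds to $Z(\mathcal{H}(G,\lambda))$, so the specialization $(-)\otimes_{\mathfrak{Z}_\Omega, \chi_\pi} E$ on the group side corresponds to tensoring the associated Hecke modules over their central character $\chi_\pi$.

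The first step is to compute $\mathfrak{M}_\lambda(\mathrm{c\text{--} Ind}_K^G \sigma_{max}(\lambda)) = \mathrm{Hom}_J(\lambda, \mathrm{c\text{--} Ind}_K^G \sigma_{max}(\lambda))$. Using Frobenius reciprocity together with the fact from \cite{MR1728541} that $\sigma_{max}(\lambda)$ occurs with multiplicity one in $\mathrm{Ind}_J^K \lambda$, one expects this Hecke module to be a specific ``standard module'' $St_{max}$ over $\mathcal{H}(G,\lambda)$; in the Iwahori case, where $\sigma_{max}(\lambda) = \mathbf{1}_K$ and $\mathcal{H}(G,\lambda)$ is the Iwahori-Hecke algebra, $St_{max}$ is the induced module on which the finite Hecke algebra of $W$ acts through the trivial character. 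The specialization of $St_{max}$ at $\chi_\pi$ is a finite-dimensional $\mathcal{H}(G,\lambda)$-module whose image under $\mathfrak{M}_\lambda^{-1}$ is, in the generic situation, the full parabolic induction rather than its Langlands quotient.

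Next, one identifies this parabolic induction explicitly with $\eta$. By Bernstein-Zelevinsky, the generic irreducible representation $\pi = Q(\Delta_{1,1}) \times \cdots \times Q(\Delta_{s,r_s})$ is the unique Langlands quotient of a standard module obtained by parabolically inducing the reversed segments $\tilde\Delta_{i,j}$ with decreasing exponents within each block; this is precisely $\eta$. To conclude, one checks that the natural map
\[
\mathrm{c\text{--} Ind}_K^G \sigma_{max}(\lambda) \otimes_{\mathfrak{Z}_\Omega,\chi_\pi} E \longrightarrow \eta,
\]
coming from a generator of $\mathrm{Hom}_K(\sigma_{max}(\lambda), \eta)$ via Frobenius reciprocity, is surjective (because $\eta$ is generated as a $G$-representation by its $\sigma_{max}(\lambda)$-isotypic subspace) and an isomorphism (by matching $\lambda$-isotypic multiplicities under $\mathfrak{M}_\lambda$). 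The ``moreover'' assertion that $\mathfrak{Z}_\Omega$ acts on $\eta$ through $\chi_\pi$ is then automatic: every irreducible subquotient of $\eta$ shares the supercuspidal support of $\pi$, so $\mathfrak{Z}_\Omega$ acts on $\eta$ by a single character, which must be $\chi_\pi$.

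The main obstacle is the explicit identification of $\mathfrak{M}_\lambda(\mathrm{c\text{--} Ind}_K^G \sigma_{max}(\lambda))$ as $St_{max}$ and the verification that its specialization at $\chi_\pi$ produces the full standard module rather than the Langlands quotient $\pi$. This is exactly what distinguishes the $\sigma_{max}$ case from the $\sigma_{min}$ case, and it hinges on the precise position of $\sigma_{max}(\lambda)$ in the partition-valued ordering of \cite{MR1728541} together with the Bernstein presentation of $\mathcal{H}(G,\lambda)$. The required tools are the type-theoretic calculations of \cite{MR1643417} and \cite{MR1711578} and the structural results of \cite{Pyv1}.
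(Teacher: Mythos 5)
You correctly identify the key obstacle, but you do not resolve it, and the paper's proof takes a different route that sidesteps it entirely. You want to compute $\mathfrak{M}_\lambda(\mathrm{c\text{--} Ind}_K^G \sigma_{max}(\lambda))$ explicitly as a ``standard module'' $St_{max}$, specialize at $\chi_\pi$, and identify the result with $\mathfrak{M}_\lambda(\eta)$. You concede this identification is ``the main obstacle'' and do not carry it out; moreover the supporting steps are themselves incomplete. The claim that the natural map $\mathrm{c\text{--} Ind}_K^G \sigma_{max}(\lambda) \otimes_{\mathfrak{Z}_\Omega,\chi_\pi} E \to \eta$ is surjective ``because $\eta$ is generated by its $\sigma_{max}(\lambda)$-isotypic subspace'' is not automatic: $(J,\lambda)$ is the $\Omega$-type, so $\eta$ is generated by its $\lambda$-isotypic vectors, but $\sigma_{max}(\lambda)$ is only one of the $K$-types inside $\mathrm{Ind}_J^K\lambda$, and generation by the $\sigma_{max}(\lambda)$-isotypic part requires a genuine argument. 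Likewise, ``matching $\lambda$-isotypic multiplicities under $\mathfrak{M}_\lambda$'' is circular, since establishing that the two Hecke modules coincide \emph{is} the isomorphism being sought.

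The paper avoids all of this by citing a ready-made result: Proposition 3.10 of \cite{MR3529394} already gives
\[
\mathrm{c\text{--} Ind}_K^G\sigma_{max}(\lambda)\otimes_{\mathfrak{Z}_\Omega,\chi_{\pi'}}E\;\simeq\;\eta,
\]
where $\pi'$ is the $G$-\emph{cosocle} of $\eta$. The only thing left to prove is $\chi_{\pi'}=\chi_\pi$. Since $\eta$ has irreducible socle occurring with multiplicity one, and $\pi$ is generic, $\pi$ must be that socle; the Bernstein centre acts on $\eta$ through a single maximal ideal (by the tensor-product construction), and evaluating on the socle identifies that ideal with $\chi_\pi$, while by construction it equals $\chi_{\pi'}$. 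In other words, the difficult specialization computation has already been done in \cite{MR3529394} at the cosocle character, and the generic-socle observation is precisely the bridge that lets one rewrite the statement at $\chi_\pi$. This cosocle-to-socle transfer is the one idea your proposal is missing, and it is what makes the lemma a short corollary rather than a fresh Hecke-module computation.
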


\begin{proof} The result follows by the argument similar to the one given in the proof of \cite[Corollary 3.11]{MR3529394}. Let  $\pi'$ be $G$-cosocle of $\eta$. Then by \cite[Proposition 3.10]{MR3529394}, we have $\mathrm{c\text{--} Ind}_{K}^{G} \sigma_{max}(\lambda) \otimes_{\mathfrak{Z}_{\Omega},\chi_{\pi'}} E \simeq \eta$.  However $\pi$ occurs as a subquotient of $\eta$ and the $G$-socle of $\eta$ is irreducible and occurs as a subquotient with multiplicity one. Since $\pi$ is generic, it follows that $\pi$ is the $G$-socle of $\eta$. Then the action of $\mathfrak{Z}_{\Omega}$ on $\eta$ factors through a maximal ideal, which is equal to $\chi_{\pi}$. By construction the action of $\mathfrak{Z}_{\Omega}$ on $\eta$ is given by $\chi_{\pi'}$, thus $\chi_{\pi'}=\chi_{\pi}$.
\end{proof}

We are given $\pi$ an irreducible generic representation as in lemma above.  We would like to describe $\chi_{\pi}$ in more concrete terms. In facts we would like to have a concrete description of the action of $\mathfrak{Z}_{\Omega}$ on $\pi \otimes |\det|^{\frac{n-1}{2}}$ in terms of eigenvalues of the associated Weil-Deligne representation by the local Langlands correspondence.

\medskip

Let $W:=W(k_{F})$ be the ring of Witt vectors of the residue field of $F$, recall that $\varpi$ is a uniformizer of $F$. Let $F_0 = W(k_F)[1/p]$, then $F/F_0$ is totally ramified. We will denote by $E(u) \in F_0[u]$ the Eisenstein polynomial of $\varpi$.

Let $R_{\tilde{\mathfrak{p}}}^{\square}(\sigma_{min}):=R^{\square}_{\overline{r}}(\tau, \mathbf{v})$ be the unique reduced and $p$-torsion
free quotient of $R_{\tilde{\mathfrak{p}}}^{\square}$ corresponding to potentially semi-stable lifts of weight $\sigma_{alg}$ (i.e. of weight $\mathbf{v}$) and inertial type $\tau$ constructed in \cite{MR2373358}.


It follows from the Theorem (2.5.5) (2) \cite{MR2373358} that $R^{\square}_{\overline{r}}(\tau, \mathbf{v})[1/p]$ is endowed with a universal $(\varphi,N)$-module $D^{\square}_{\overline{r}}(\tau, \mathbf{v})$, which is a locally free $R^{\square}_{\overline{r}}(\tau, \mathbf{v})[1/p]\otimes_{\Qp}F_{0}$-module of rank $n$.

Let us recall a few facts about $(\varphi,N)$-modules. The discussion about $(\varphi, N)$-modules follows \cite[section 4]{MR2359853}. We have two finite extensions $F$ (the base field) and $E$ (the coefficient field) of $\Qp$ such that $[F : \Qp] = |\mathrm{Hom}_{\Qp} (F, E)|$ where $\mathrm{Hom}_{\Qp}(F,E)$ denotes the set of all $\Qp$-linear embeddings of the field $F$ into the field $E$. We assume $F$ is contained in an algebraic closure $\overline{\Q}_{p}$ of $\Qp$. We denote by $q=p^{f_0}$ the cardinality of the residue field of $F$ and recall that $F_0$ is the maximal unramified subfield of $F$. If $e := [L : \Qp]/f_0$, we set $val_F(x) := e.val_{\Qp}(x)$ (where $val_{\Qp}(p) := 1$) and $|x|_F := q^{-val_F(x)}$ for any $x$ in a finite extension of $\Qp$. We denote by $W_F=W(\overline{\Q}_{p}/F)$ (resp. $G_F:=\mathrm{Gal}(\overline{\Q}_{p}/F)$) the Weil (resp. Galois) group of $F$ and by
$\mathrm{rec}_p : W(\overline{\Q}_{p}/F)^{ab} \to F^{\times}$ the reciprocity map sending the geometric Frobenius to the uniformizer.

Let $L$ be a finite Galois extension of $F$ and let $L_0$ be the maximal unramified subfield of $L$. We assume $[L_0: \Qp] = |\mathrm{Hom}_{\Qp}(L_0,E)|$ and we let $p^{f}$ be the cardinality of the residue field of $L_0$ and $\varphi_0$ be the Frobenius on $L_0$ (raising to the $p$ each component of the Witt vectors). Consider the following two categories:

\begin{enumerate}
\item  the category $\mathrm{WD}_{L/F}$ of representations $(r, N, V)$ of the Weil-Deligne group of $F$ on a $E$-vector space $V$ of finite dimension such that $r$ is unramified when restricted to $W(\overline{\Q}_{p}/L)$.

\item the category $\mathrm{MOD}_{L/F}$ of quadruples $(\varphi, N, \mathrm{Gal}(L/F), D)$ where $D$ is a free $L_0 \otimes_{\Qp}E$-module of finite rank endowed with a Frobenius $\varphi : D \to D$, which is $\varphi_0$-semi-linear bijective map, an $L_0 \otimes_{\Qp}E$-linear endomorphism $N : D \to D$ such that $N\varphi = p\varphi N$ and an action of $\mathrm{Gal}(L/F)$ commuting with $\varphi$ and $N$.
\end{enumerate}

There is a functor (due to Fontaine):
\[\mathrm{WD} : \mathrm{MOD}_{L/F} \longrightarrow \mathrm{WD}_{L/F}\]

The following proposition was proven in \cite{MR2359853}(Proposition 4.1):

\begin{prop}\label{2.1}
The functor $\mathrm{WD} : \mathrm{MOD}_{L/F} \longrightarrow \mathrm{WD}_{L/F}$ is an equivalence
of categories.
\end{prop}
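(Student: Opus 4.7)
The plan is to construct an explicit quasi-inverse to $\mathrm{WD}$ and check both round-trips are canonically naturally isomorphic to the identity. The structural input is the hypothesis $[L_0 : \Qp] = |\mathrm{Hom}_{\Qp}(L_0, E)|$, which yields a product decomposition $L_0 \otimes_{\Qp} E \simeq \prod_{\sigma} E$ indexed by the embeddings $\sigma : L_0 \hookrightarrow E$. Any $D$ in $\mathrm{MOD}_{L/F}$ therefore splits canonically as $D = \bigoplus_{\sigma} D_{\sigma}$ with each $D_{\sigma}$ a free $E$-module of rank $n$; the Frobenius cyclically permutes the summands via $\varphi : D_{\sigma} \xrightarrow{\sim} D_{\sigma \circ \varphi_0^{-1}}$, while $N$ respects the decomposition and $\mathrm{Gal}(L/F)$ acts compatibly with the action on embeddings.

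Starting from $\mathrm{WD}$, I would fix an embedding $\sigma_0$, set $V := D_{\sigma_0}$, and linearize Frobenius as $\Phi := \varphi^f|_V$, which is $E$-linear since $\varphi_0^f = \mathrm{id}_{L_0}$. Together with the restricted $N|_V$ and the restricted $\mathrm{Gal}(L/F)$-action, one builds a $W_F$-action on $V$ by pairing $\mathrm{Gal}(L/F)$ (which accounts for inertia modulo $W_L$) with the appropriate power of $\Phi$ on the unramified quotient; iterating $N\varphi = p\varphi N$ gives $\Phi N \Phi^{-1} = p^f N$, which is the Weil--Deligne relation with the correct normalization for the residue field of $L_0$, and unramifiedness on $W_L$ is automatic.

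For the quasi-inverse I would send $(r, N, V) \in \mathrm{WD}_{L/F}$ to $D := V \otimes_E (L_0 \otimes_{\Qp} E) = \bigoplus_{\sigma} V_{\sigma}$, with $N$ acting diagonally, the $\mathrm{Gal}(L/F)$-action combining $r|_{\mathrm{Gal}(L/F)}$ with the permutation action on embeddings, and the semilinear $\varphi : V_{\sigma} \to V_{\sigma \circ \varphi_0^{-1}}$ taken to be the identity on all but one edge of the cycle, with all of the arithmetic of $r(\mathrm{Fr})$ concentrated in the single edge that closes the cycle back at $\sigma_0$. The hypothesis that $r|_{W_L}$ is unramified is precisely what allows the $W_F$-action to descend to the finite quotient $\mathrm{Gal}(L/F)$ on $D$, i.e.\ to fit into the structure of $\mathrm{MOD}_{L/F}$.

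The main obstacle is bookkeeping rather than any deep input: one must verify that the quasi-inverse is, up to canonical natural isomorphism, independent of the auxiliary choice of $\sigma_0$; that the normalization conventions make $N\varphi = p\varphi N$ correspond to the intended Weil--Deligne relation for $\Phi$ and $N$; and that functoriality is preserved. Once these are in place, essential surjectivity, fullness and faithfulness follow by matching the pieces of structure component-by-component along the decomposition $L_0 \otimes_{\Qp} E = \prod_{\sigma} E$, with dimensions over $E$ and over $L_0 \otimes_{\Qp} E$ agreeing by construction.
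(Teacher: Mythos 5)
The paper does not prove this proposition; it simply cites Breuil--Schneider, Proposition 4.1, which carries out Fontaine's construction. Your sketch follows that same construction (decompose $L_0\otimes_{\Qp}E\simeq\prod_\sigma E$, restrict to one component, linearize $\varphi^f$, and build the Weil group action; invert by spreading Frobenius over the Galois orbit), so the approach matches the cited source. However, two points in your description are not merely ``bookkeeping'' and would fail if carried out literally.

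First, you cannot restrict the $\mathrm{Gal}(L/F)$-action to $V:=D_{\sigma_0}$: an element $g$ acting nontrivially on $L_0$ sends $D_{\sigma_0}$ to a \emph{different} summand $D_\sigma$, so ``the restricted $\mathrm{Gal}(L/F)$-action'' is not an endomorphism of $V$. The correct recipe (this is what the paper itself uses in the proof of Theorem~\ref{4.4}) is $r(w):=\overline{w}\circ\varphi^{-\alpha(w)}$, where $\overline{w}$ is the image of $w$ in $\mathrm{Gal}(L/F)$ and $\alpha(w)\in f_0\Z$ records the power of Frobenius $w$ induces on $\overline{\mathbb F}_p$; this stabilizes $D_{\sigma_0}$ because the two permutations of the index set $\{\sigma\}$ cancel. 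Crucially, the correcting factor is a power of the \emph{semilinear} $\varphi$ with exponent a multiple of $f_0$, not of the linearized $\Phi=\varphi^f$: for a generic $w\in W_F$ one has $f_0\mid\alpha(w)$ but not $f\mid\alpha(w)$, so ``pairing $\mathrm{Gal}(L/F)$ with the appropriate power of $\Phi$'' does not reach all of $W_F$. Symmetrically, in the quasi-inverse direction $\mathrm{Gal}(L/F)$ is a \emph{quotient} of $W_F$, not a subgroup, so $r|_{\mathrm{Gal}(L/F)}$ is not defined; one must pick lifts $\tilde g\in W_F$ and correct by $\varphi^{\alpha(\tilde g)}$, with unramifiedness of $r|_{W_L}$ being exactly what makes the result independent of the lift.

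Second, the normalization is off by a sign: iterating $N\varphi=p\varphi N$ gives $N\varphi^k=p^k\varphi^k N$, hence $\Phi^{-1}N\Phi=p^fN$, i.e.\ $\Phi N\Phi^{-1}=p^{-f}N$, not $p^fN$. With the paper's conventions (geometric Frobenius $\mapsto$ uniformizer, $|x|_F=q^{-\mathrm{val}_F(x)}$) this is consistent: $\Phi=\varphi^f$ plays the role of $r(\mathrm{geomFrob}_L)$ and $\|\mathrm{geomFrob}_L\|_F=p^{-f}$. Both issues are fixable and the overall strategy is correct, but as written the construction of the $W_F$-action would not go through.
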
 

\noindent Denote by $\mathrm{MOD}$ a quasi inverse of the functor $\mathrm{WD}$.

\medskip

\noindent If $D$ is an object of $\mathrm{MOD}_{L/F}$, we define:
\[ t_N(D)= \frac{1}{[F:L_0]f} val_F(\mathrm{det}_{L_0}(\varphi^{f}|D))\]

\noindent For $\sigma : F \hookrightarrow E$, let $D_{L}=D \otimes_{L_0}L$ and :
\[D_{L,\sigma}= D_{L} \otimes_{L\otimes_{\Qp}E} (L\otimes_{F,\sigma}E)\]

Then one has $D_{L} \simeq \prod_{\sigma: F \to E}D_{L,\sigma}$. To give an $L\otimes_{\Qp}E$-submodule $\mathrm{Fil}^{i}D_{L}$ of $D_{L}$ preserved by $\mathrm{Gal}(L/F)$ is the same thing as to give a collection $(\mathrm{Fil}^{i}D_{L,\sigma})_{\sigma}$ where $\mathrm{Fil}^{i}D_{L,\sigma}$ is a free $L\otimes_{F,\sigma}E$-submodule of $D_{L,\sigma}$ (hence a direct factor as $L\otimes_{F,\sigma}E$-module) preserved by the action of $\mathrm{Gal}(L/F)$. If $(\mathrm{Fil}^{i}D_{L,\sigma})_{\sigma,i}$ is a decreasing exhaustive separated filtration on $D_{L}$ by $L\otimes_{\Qp}E$-submodules indexed by $i \in\Z$ and preserved by $\mathrm{Gal}(L/F)$, we define:
\[t_H(D_{L}) = \sum_{\sigma} \sum_{i \in \Z} i \dim_{L}(\mathrm{Fil}^{i}D_{L,\sigma}/\mathrm{Fil}^{i+1}D_{L,\sigma})\]

Recall that such a filtration is called admissible if $t_H(D_{L})=t_N(D)$ and if, for any $L_0$-vector subspace $D'\subseteq D$ preserved by $\varphi$ and $N$ with the induced filtration on $D'_{L}$, one has $t_H(D'_{L})\leq t_N(D')$.

\medskip

Our goal here is to construct a canonical map $\mathcal{H}(\sigma_{min}) \longrightarrow R^{\square}_{\overline{r}}(\tau, \mathbf{v})[1/p]$. We proceed in the following steps:
\begin{enumerate}
\item Take a smooth closed point $x \in \Sp R^{\square}_{\overline{r}}(\tau, \mathbf{v})[1/p]$.

\item The point $x$ is given by an $E$-algebra homomorphism  $x :R^{\square}_{\overline{r}}(\tau, \mathbf{v})[1/p] \longrightarrow E$ and it corresponds to $n$-dimensional Galois representation of $G_F$; denoted  $V_x$. Let $D_{st,L}(V_x):=(B_{st} \otimes_{\Qp} V_x)^{G_L}$, by construction this is also $D_x$, the specialization of $D^{\square}_{\overline{r}}(\tau, \mathbf{v})$ at closed point $x$.   The admissible filtered $(\varphi, N, \mathrm{Gal}(L/F))$-module $D_x=D_{st,L}(V_x)$ is equipped with Frobenius endomorphism $\phi_x$, which is the specialization of the universal Frobenius $\varphi$ on  $D^{\square}_{\overline{r}}(\tau, \mathbf{v})$ at $x$, i.e. $\varphi \otimes \kappa(x)=\phi_x$. 

\item Let $\tilde{\pi}$ be an irreducible representation of $GL_n(F)$ such that $$\mathrm{rec}_p(\tilde{\pi} \otimes |\det|^{\frac{1-n}{2}}) = WD(D_x),$$ \noindent here $WD(D_x)$ is the Weil-Deligne representation associated to $D_x$ via Fontaine's recipe. Let $\pi:=\tilde{\pi} \otimes |\det|^{\frac{1-n}{2}}$ so that  $\pi=\mathrm{rec}_p^{-1}(WD(D_x))$. 

\item Theorem 1.2.7 \cite{MR3546966} implies that the representation $\pi$ is generic, because $x$ is a smooth point.

\item Let $\eta:= \mathrm{c\text{--} Ind}_{K}^{G} \sigma_{max} \otimes_{\mathfrak{Z}_{\Omega},\chi_{\pi}}E$, as in Lemma \ref{4.2}. By the Lemma \ref{4.2} the action of $\mathfrak{Z}_{\Omega}$ on $\tilde{\pi}=\pi \otimes |\det|^{\frac{n-1}{2}}$ is identified with the action of $\mathfrak{Z}_{\Omega}$ on $\eta \otimes |\det|^{\frac{n-1}{2}}$. We will try to understand the action of $\mathfrak{Z}_{\Omega}$ on $\eta \otimes |\det|^{\frac{n-1}{2}}$.

\item We can interpret the action of $\mathfrak{Z}_{\Omega}$ on $\eta \otimes |\det|^{\frac{n-1}{2}}$ in terms of eigenvalues of the linearized canonical map obtained from the specialization of the absolute Frobenius $\varphi$ at point $x$. For this we use the decomposition of the spherical Hecke algebra of a semi-simple type as a tensor product of Iwahori Hecke algebras, and this decomposition restricts to $\mathfrak{Z}_{\Omega}$. Then we use the Satake isomorphism on each factor of $\mathfrak{Z}_{\Omega}$. In the Iwahori case there is just one factor in that tensor product decomposition.

\item From the previous step we can \textquotedblleft guess" a ring homomorphism $ \beta : \mathfrak{Z}_{\Omega} \longrightarrow R^{\square}_{\overline{r}}(\tau, \mathbf{v})[1/p]$. This map is canonical in the sense that if there was another map $\beta'$ it would coincide with $\beta$ on all of the smooth points, and since the smooth points are dense by Theorem 3.3.4 \cite{MR2373358}, the two maps have to be equal.

\item Finally $\mathcal{H}(\sigma_{min}(\lambda)) \to \mathcal{H}(\sigma_{min})$ given by $f \mapsto f.\sigma_{alg}$ is an isomorphism according to Lemma 1.4 \cite{MR2290601} and  by Corollary 7.2 \cite{Pyv1} we have a canonical isomorphism $\mathfrak{Z}_{\Omega} \simeq \mathcal{H}(\sigma_{min}(\lambda))$. Composing $\beta$ with those isomorphisms gives us the desired map.

\end{enumerate}

Notice that it follows from step 5 that the map $\beta$ does not depend on monodromy. 

The organisation of this section is the following. In the subsection \ref{LA.2.1} we will construct the map $\beta$ in full generality by modifying the argument given in \cite{MR3529394}. Then in section \ref{LA.2.2} we will give another proof of the same result in the Iwahori case based on explicit computation.

\subsection{Construction in general case}\label{LA.2.1}

The aim of the next sections is to carry out steps 1.-8. outlined above. Morally proof is based on the Lemma 4.3 \cite{MR3529394}. We will prove the following Theorem:

\begin{thm}\label{4.4}
There is an $E$-algebra homomorphism
\[\beta: \mathcal{H}(\sigma_{min}) \longrightarrow R_{\tilde{\mathfrak{p}}}^{\square}(\sigma_{min})[1/p]\]

\noindent such that for any closed point $x$ of $R_{\tilde{\mathfrak{p}}}^{\square}(\sigma_{min})[1/p]$ with residue field $E_x$, the action of $\mathcal{H}(\sigma_{min})$ on a smooth $G$-representation $\pi_{sm}(r_x)$ factors as $\beta$ composed with the evaluation map  $R_{\tilde{\mathfrak{p}}}^{\square}(\sigma_{min})[1/p] \longrightarrow E_x$.
\end{thm}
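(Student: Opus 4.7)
The plan is to carry out the eight-step outline given above and to recognize the ``guessed'' formula of step 7 as a genuine algebraic map into $R_{\tilde{\mathfrak{p}}}^{\square}(\sigma_{min})[1/p]$. First, via the identifications $\mathfrak{Z}_{\Omega} \simeq \mathcal{H}(\sigma_{min}(\lambda)) \simeq \mathcal{H}(\sigma_{min})$ (Corollary 7.2 of \cite{Pyv1} and Lemma 1.4 of \cite{MR2290601}) I replace the source by the Bernstein centre. Using the Bushnell--Kutzko decomposition of $\mathcal{H}(G,\lambda)$ as a tensor product of affine Hecke algebras of type A, passing to centres yields $\mathfrak{Z}_{\Omega} \simeq \bigotimes_i \mathfrak{Z}_{\Omega_i}$; on each factor Satake identifies $\mathfrak{Z}_{\Omega_i}$ with a ring of symmetric Laurent polynomials whose generators may be taken to be elementary symmetric functions in variables that correspond to unramified twists of a fixed supercuspidal datum. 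It is therefore enough to produce $\beta$ on each tensor factor, on a canonical set of generators (the analogues of $\theta_1,\ldots,\theta_n$ in the Iwahori case).

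Next I would describe the target at smooth closed points. Let $x \in \Sp R_{\tilde{\mathfrak{p}}}^{\square}(\sigma_{min})[1/p]$ be a smooth closed point. The associated admissible filtered $(\varphi,N,\mathrm{Gal}(L/F))$-module $D_x$ gives, via Fontaine's equivalence of Proposition \ref{2.1}, a Weil--Deligne representation $WD(D_x)$, and Theorem 1.2.7 of \cite{MR3546966} guarantees that $\pi_x := \mathrm{rec}_p^{-1}(WD(D_x))$ is irreducible generic. By Lemma \ref{4.2} the character $\chi_{\pi_x}$ by which $\mathfrak{Z}_{\Omega}$ acts on $\pi_x$ coincides with its action on the standard parabolically induced module $\eta_x$, and the Satake isomorphism computes this action as evaluation of the standard symmetric-polynomial generators at the Frobenius eigenvalues of $WD(D_x)^{F-ss}$, twisted by $|\det|^{(n-1)/2}$ and corrected by the Hodge--Tate normalization encoded in $\xi_{\kappa,j}$.

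I would then globalize these formulas. Because $D^{\square}_{\overline{r}}(\tau,\mathbf{v})$ is locally free of rank $n$ over $R_{\tilde{\mathfrak{p}}}^{\square}(\sigma_{min})[1/p]\otimes_{\Qp} F_0$, the traces $\mathrm{Tr}(\bigwedge^{r}\varphi^f)$ (and their analogues on each tensor factor indexed by the supercuspidal supports) are bona fide elements of $R_{\tilde{\mathfrak{p}}}^{\square}(\sigma_{min})[1/p]$. Multiplying by the explicit scalar normalizations dictated by step 6 yields the proposed values of $\beta$ on generators; in the Iwahori case this reproduces the displayed formula, and block-by-block in general. The resulting $\beta$ is automatically an $E$-algebra map since each Satake factor is a polynomial ring and $\beta$ is defined on polynomial generators. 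By construction the specialization of $\beta$ at any smooth closed $x$ recovers the action of $\mathfrak{Z}_{\Omega}$ on $\pi_{sm}(r_x)$; the smooth closed points are Zariski-dense in $\Sp R_{\tilde{\mathfrak{p}}}^{\square}(\sigma_{min})[1/p]$ by Theorem 3.3.4 of \cite{MR2373358} and the ring is reduced and $p$-torsion free, so the required compatibility at all closed points follows (and also shows that $\beta$ is the unique such map).

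The main obstacle I anticipate is step 6: matching the Satake parameters with the Frobenius eigenvalues on $D_x$ in the presence of three simultaneous normalizations, namely the $|\det|^{(1-n)/2}$ twist in the definition of $r_p$, the $|\det|^{(n-1)/2}$ twist introduced in Lemma \ref{4.2} to pass from $\pi$ to $\eta$, and the Hodge--Tate shift $\xi_{\kappa,j}=-i_{\kappa,j}+(j-1)$ arising from the filtration on $D_x$. Tracking these carefully is what produces the exponents $-\sum_{\kappa}\sum_{j=r}^{n}\xi_{\kappa,j}$ and $\frac{r(1-r)}{2}$ in the final formula. In the general Bernstein case, one must additionally align the Bushnell--Kutzko tensor decomposition of $\mathcal{H}(G,\lambda)$ with the factorization of $\pi_x$ into Bernstein--Zelevinsky blocks $Q(\Delta_{i,1})\times\cdots\times Q(\Delta_{i,r_i})$ indexed by the distinct supercuspidals $\pi_i$; this is the computational heart of the proof.
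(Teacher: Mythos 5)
Your route is genuinely different from the one the paper actually takes for this theorem, and it has a gap you acknowledge but do not close. The paper's proof in Section~\ref{LA.2.1} does \emph{not} carry out the Satake/Bushnell--Kutzko step~6 of the outline; it instead invokes Chenevier's pseudo-representation $T:W_F\to\mathfrak{Z}_{\Omega}$, defines $\mathrm{Tr}:W_F\to R^{\square}_{\overline{r}}(\tau,\mathbf{v})[1/p]$ from the universal Frobenius, checks $\gamma_G\circ T=\gamma_{WD}\circ\mathrm{Tr}$ at closed points, and then uses the fact that the image of $T$ \emph{generates} $\mathfrak{Z}_{\Omega}$ (Lemma~4.5 of \cite{MR3529394}) together with injectivity of $\gamma_{WD}$ to show that $\sum\mu_i T(g_i)\mapsto\sum\mu_i\mathrm{Tr}(g_i)$ is a well-defined ring map~$I$. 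The explicit Satake computation you lean on is given in the paper only for the Iwahori case (Section~\ref{LA.2.2}), precisely because in general the matching you describe as ``the computational heart of the proof'' is hard.

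Concretely, the missing step in your argument is the production of explicit elements of $R^{\square}_{\overline{r}}(\tau,\mathbf{v})[1/p]$ interpolating the Satake generators on \emph{each} Bushnell--Kutzko factor of $\mathfrak{Z}_{\Omega}$. For a nontrivial inertial type $\tau$ the universal module $D^{\square}_{\overline{r}}(\tau,\mathbf{v})$ carries an action of $\mathrm{Gal}(L/F)$, and the linearized Frobenius $\varphi^{f}$ has to be decomposed according to the $\tau$-isotypic blocks before you can speak of ``analogues on each tensor factor indexed by the supercuspidal supports''; that decomposition and the normalization of the resulting Satake parameters (which Bushnell--Kutzko tensor factor sees which eigenvalues of $\bigwedge^{r}\varphi^{f}$, with which $q$-shift) is exactly what you have not supplied. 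Without it, the claim that $\beta$ is ``defined on polynomial generators'' has no content: you know the generators of the source but not what they map to in the target. The pseudo-representation argument buys you exactly this: it never needs explicit generators, because traces $\mathrm{Tr}(r(w)\,|\,D^{\square})$ for $w\in W_F$ are manifestly elements of $R^{\square}_{\overline{r}}(\tau,\mathbf{v})[1/p]$ and Lemma~4.5 of \cite{MR3529394} certifies that these suffice to generate $\mathfrak{Z}_{\Omega}$. Your density/reducedness/torsion-free observations are fine for \emph{uniqueness} of $\beta$, but they do not produce the map; existence is the hard part, and the density argument cannot conjure elements of the target ring. To salvage your route you would need to prove (at least) a general-type analogue of Proposition~\ref{4.3}, or else fall back on the pseudo-representation mechanism the paper uses.
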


\begin{proof} Consider the following map, obtained by specialisation:
\[ \gamma_G : \mathfrak{Z}_{\Omega} \longrightarrow \prod_{x \in \Spm R^{\square}_{\overline{r}}(\tau, \mathbf{v})[1/p]} E'_x\]
where $\gamma_G$ is defined on the factor corresponding to $x$ by evaluating $\mathfrak{Z}_{\Omega}$ at the closed point in the Bernstein component $\Omega$ determined via local Langlands by $x$, and $E'_x/E_x$ is a sufficiently large finite extension.

\noindent Consider as well the following map, also obtained by specialisation:
\[ \gamma_{WD} : R^{\square}_{\overline{r}}(\tau, \mathbf{v})[1/p] \longrightarrow \prod_{x \in \Spm R^{\square}_{\overline{r}}(\tau, \mathbf{v})[1/p]} E'_x\]

\noindent The map $\gamma_{WD}$ is injective, because the ring $R^{\square}_{\overline{r}}(\tau, \mathbf{v})[1/p]$ is reduced and Jacobson.

\noindent We have the following diagram:
$$\xymatrixrowsep{5pc}\xymatrixcolsep{5pc}\xymatrix{
\mathfrak{Z}_{\Omega}  \ar[r]^-{\gamma_G} \ar@{.>}[rd]^{I} &\prod\limits_{x \in \Spm R^{\square}_{\overline{r}}(\tau, \mathbf{v})[1/p]} E'_x\\
W_F \ar[u]^T \ar@{.>}[r]^{?} &R^{\square}_{\overline{r}}(\tau, \mathbf{v})[1/p] \ar[u]^{\gamma_{WD}}}
$$

\noindent where $T : W_F  \longrightarrow  \mathfrak{Z}_{\Omega}$ be the pseudo-representation constructed in Proposition 3.11 of \cite{Che} and $I$ is the map that we want to construct. Observe that \cite[Lemma 3.24]{MR3529394} tells us that the Chenevier’s $E[\mathfrak{B}]$ is our  $\mathfrak{Z}_{\Omega}$, so that the definition of the map $T$ makes sense.

First we will construct a map $?$ such that the diagram above commutes.  We can apply the Fontaine's recipe to the absolute Frobenius $\varphi$ on $D^{\square}_{\overline{r}}(\tau, \mathbf{v})$, which is a free $R^{\square}_{\overline{r}}(\tau, \mathbf{v})[1/p]\otimes_{\Qp}F_{0}$-module of rank $n$. Let's recall first this construction in the usual setting.

Let $x \in \Spm R^{\square}_{\overline{r}}(\tau, \mathbf{v})[1/p]$ be a closed $E$-valued point. Let $D_x$ and $\varphi_x$ be the specializations of $D^{\square}_{\overline{r}}(\tau, \mathbf{v})$ and $\varphi$ at $x$, respectively. Let $L$ be a finite extension of $F$ where all the Galois representations of the given inertial type $\tau$ are semi-stable and $L_0$ a subfield of $L$ such that $L/L_0$ is totally ramified.

Then we deduce from the isomorphism 
$$L_0\otimes_{\Qp} E \simeq \prod_{\sigma_0 : L_0 \hookrightarrow E} E,$$

\noindent the isomorphism
\[D_x =  \prod_{\sigma_0 : L_0 \hookrightarrow E} D_{\sigma_0},\]

\noindent where $D_{\sigma_0} = (0,\ldots,0,1_{\sigma_0},0,\ldots,0 )D_x$, is the \textquotedblleft $\sigma_{0}$-th coordinate of $D_x$". Fix now a $\sigma_0$. Set $W_x = D_{\sigma_0}$.

Let $w \in W_F$, define $\overline{w}$ to be the image of $w$ in $\mathrm{Gal}(L/F)$ and let $\alpha(w) \in f_0\Z$ be such that the action of $w$ on $\overline{\mathbb{F}}_p$ is the $\alpha(w)$-power of the map $(x\mapsto x^{p})$.

We can define an endomorphism of $D_x$ by $r_x(w):=\overline{w}\circ \varphi_x^{-\alpha(w)}$. It can be shown that the restriction of $r(w)$ to $W_x$ does not depend on $\sigma_0$.

We are interested in the trace of $r_x(w)$. We have trivially $$\mathrm{Tr}(r_x(w)|D_x) = |\mathrm{Hom}_{\Qp}(L_0,E)| \mathrm{Tr}(r_x(w)|W_x).$$ \noindent  However since $E$ is assumed to be large enough we have $|\mathrm{Hom}_{\Qp}(L_0,E)| = [L_0:\Qp]$.

Observe that it makes sense to define for each $w \in W_F$ an endomorphism $r(w): =\overline{w}\circ \varphi^{-\alpha(w)}$ of $D^{\square}_{\overline{r}}(\tau, \mathbf{v})$ and we can also take its trace.

Define now the following map:
$$\begin{array}{ccccc}
\mathrm{Tr} & : & W_F & \longrightarrow & R^{\square}_{\overline{r}}(\tau, \mathbf{v})[1/p] \\
 & & w &  \longmapsto & \frac{1}{[L_0:\Qp]}\mathrm{Tr}(r(w)|D^{\square}_{\overline{r}}(\tau, \mathbf{v})) \\
\end{array}$$

\noindent Then by the construction of $T$, we have $\gamma_G \circ T = \gamma_{WD} \circ \mathrm{Tr}$, i.e. the diagram of sets
$$\xymatrixrowsep{5pc}\xymatrixcolsep{5pc}\xymatrix{
\mathfrak{Z}_{\Omega}  \ar[r]^-{\gamma_G} \ar@{.>}[rd]^{I} &\prod\limits_{x \in \Spm R^{\square}_{\overline{r}}(\tau, \mathbf{v})[1/p]} E'_x\\
W_F \ar[u]^T \ar[r]^-{\mathrm{Tr}} &R^{\square}_{\overline{r}}(\tau, \mathbf{v})[1/p] \ar[u]^{\gamma_{WD}}}
$$

\noindent commutes. Now we can define the map $I$. In order to do so, it suffices to show that the image of  $\mathfrak{Z}_{\Omega}$ under $\gamma_G$ is contained in the image of $\gamma_{WD}$. However by Lemma 4.5 \cite{MR3529394} the image of $T$ generates $\mathfrak{Z}_{\Omega}$. Then, any element $a\in \mathfrak{Z}_{\Omega}$ can be written as $a = \sum_i \mu_i T(g_i)$. It follows from the commutative diagram above, that the map $I$ is given by $\sum_i \mu_i T(g_i) \mapsto \sum_i \mu_i \mathrm{Tr}(g_i)$. Let's prove that the map $I$ is a well defined ring homomorphism. 

The map $I$ is well defined. Indeed, if we choose two different presentations of an element $a\in \mathfrak{Z}_{\Omega}$, $a=\sum_i \mu_i T(g_i)=\sum_k \lambda_k T(h_k)$ then the elements $\sum_i \mu_i \mathrm{Tr}(g_i)$ and $\sum_k \lambda_k \mathrm{Tr}(h_k)$ should coincide. It is enough to prove that if $\sum_i \mu_i T(g_i)=0$, then $\sum_i \mu_i \mathrm{Tr}(g_i)=0$. Indeed, we have $0=\gamma_G(0)=\gamma_G(\sum_i \mu_i T(g_i))=\sum_i \mu_i \gamma_G(T(g_i))= \sum_i \mu_i \gamma_{WD}(\mathrm{Tr}(g_i))=\gamma_{WD}(\sum_i \mu_i \mathrm{Tr}(g_i))$, then $\sum_i \mu_i \mathrm{Tr}(g_i)=0$ since $\gamma_{WD}$ is injective.

Now, we will prove that $I$ is a ring homomorphism. First notice that $ \gamma_G$ and $\gamma_{WD}$ are already ring homomorphisms. Let any $a, b \in \mathfrak{Z}_{\Omega}$, then 
$$\gamma_{WD}(I(a.b)-I(a).I(b))=\gamma_{WD}(I(a.b))-\gamma_{WD}(I(a)).\gamma_{WD}(I(b))$$
$$=\gamma_G(a.b)-\gamma_G(a).\gamma_G(b)=0$$
\noindent Since $\gamma_{WD}$ is injective it follows that $I(a.b)=I(a).I(b)$. Similarly we get $I(a+b)=I(a)+I(b)$ and $I(1)=1$.

Let $M$ be the Levi subgroup in the supercuspidal support of any irreducible representation in $\Omega$, and $X (M)$ be the group of unramified characters of $M$. The group automorphism $X (M) \longrightarrow X (M)$ given by $\chi_M \rightarrow \chi_M |\det|^{\frac{(1-n)}{2}}$ gives rise to an $E$-isomorphism $\Sp \mathfrak{Z}_D \rightarrow \Sp \mathfrak{Z}_D$. The latter map is invariant under the $W(D)$-action (the point is that $|\det|$ is invariant under $G$-conjugation) so it descends to an $E$-isomorphism $\Sp \mathfrak{Z}_{\Omega} \rightarrow \Sp \mathfrak{Z}_{\Omega}$. Let $t_W : \mathfrak{Z}_{\Omega} \rightarrow \mathfrak{Z}_{\Omega}$ denote the induced isomorphism. Now we construct $\beta'$ as the following composite map:
\[ \mathfrak{Z}_{\Omega}\xrightarrow{t_W} \mathfrak{Z}_{\Omega} \xrightarrow{I} R^{\square}_{\overline{r}}(\tau, \mathbf{v})[1/p]\]

In order to get the map $\beta$ as in the statement of the theorem, compose $\beta'$ with the isomorphisms  $\mathcal{H}(\sigma_{min}(\lambda)) \to \mathcal{H}(\sigma_{min})$ and $\mathfrak{Z}_{\Omega} \simeq \mathcal{H}(\sigma_{min}(\lambda))$. The the desired interpolation property of $\beta'$, follows from the commutative diagram above. This can be easily be checked on points.
\end{proof}

\subsection{Construction in the Iwahori case}\label{LA.2.2}

In this subsection we give an explicit construction of the map $\beta: \mathcal{H}(\sigma_{min}) \longrightarrow R_{\tilde{\mathfrak{p}}}^{\square}(\sigma_{min})[1/p]$ in the semi-stable case.

Assume now, that $\pi$ has a trivial type $(I,1)$, i.e. $\pi^{I}\neq  0$ and $\Omega = [T,1]_G$. So the inertial type $\tau$ is also trivial.  Let $\mathcal{H}(\sigma_{min}):=\mathrm{End}_{G}(\mathrm{c\text{--} Ind}_{K}^{G} \sigma_{min})$, and by Corollary 7.2 \cite{Pyv1} we have a canonical isomorphism $\mathfrak{Z}_{\Omega} \simeq \mathcal{H}(\sigma_{min}(\lambda))$ and also $\mathfrak{Z}_{\Omega} \simeq \mathcal{H}(\sigma_{max}(\lambda))=\mathcal{H}(G,K)$.  Moreover the map $\mathcal{H}(G,K) \to \mathcal{H}(\sigma_{max})$ given by  $f \mapsto f.\sigma_{alg}$. is an isomorphism according to Lemma 1.4 \cite{MR2290601}. By Satake isomorphism we have $\mathcal{H}(G,K) \simeq E[\theta_1,\ldots,\theta_{n-1},(\theta_{n})^{\pm 1}]$, where $\theta_r$ is a double coset operator $ \left[K \begin{pmatrix}
 \varpi I_{r}& 0 \\ 
0 & I_{n-r}
\end{pmatrix} K \right] $. Putting all these isomorphisms together we have $\mathcal{H}(\sigma_{min}(\lambda)) \simeq \mathfrak{Z}_{\Omega}\simeq E[\theta_1,\ldots,\theta_{n-1},(\theta_{n})^{\pm 1}]$. So in order to describe completely the action of $\mathfrak{Z}_{\Omega}$ on $\eta \otimes |\det|^{\frac{n-1}{2}}$, it would be enough to describe the action of each $\theta_r$.

Let $q$ be the cardinality of residue field $\mathcal{O}_F/\mathfrak{p}_F$ where $\mathcal{O}_F$ is the ring of integers of $F$ and $\mathfrak{p}_F$ the maximal ideal. Let $\varpi$ be a uniformizer of $F$.

\medskip
We describe first the action of $\mathfrak{Z}_{\Omega}$.

\begin{lemma}
Let $\psi := \psi_1 \otimes \ldots \otimes \psi_n$, an unramified character of torus $T$, and $\eta = i_{B}^{G}(\psi)$. Then $\theta_r$ acts on $\eta \otimes |\det|^{\frac{n-1}{2}}$ by a scalar:
\[q^{\frac{r(1-r)}{2}}\sum\limits_{\lambda_1 < \ldots < \lambda_r} \psi_{\lambda_1}(\varpi)\ldots\psi_{\lambda_r}(\varpi) \]

\noindent where the sum is taken through all the integers $1 \leq \lambda_i \leq n$ such that those inequalities are satisfied.
\end{lemma}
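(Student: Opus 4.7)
The approach is to reduce to the classical Macdonald/Satake formula for the action of the spherical Hecke algebra on an unramified principal series, and then carefully track the twist by $|\det|^{(n-1)/2}$.

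First I would rewrite $\eta \otimes |\det|^{(n-1)/2}$ as an unramified principal series. The standard identity
$$i_B^G(\psi) \otimes (\xi \circ \det) \;\simeq\; i_B^G\bigl(\psi \otimes (\xi \circ \det)|_T\bigr)$$
for a character $\xi$ of $F^\times$, applied with $\xi = |\cdot|^{(n-1)/2}$, yields
$$\eta \otimes |\det|^{(n-1)/2} \;\simeq\; i_B^G(\psi'), \qquad \psi'_i := \psi_i \cdot |\cdot|^{(n-1)/2},$$
so in particular $\psi'_i(\varpi) = \psi_i(\varpi) \cdot q^{-(n-1)/2}$. Since $i_B^G(\psi')$ is generated by its one-dimensional spherical line and $\theta_r$ is viewed as an element of the Bernstein centre $\mathfrak{Z}_\Omega$ via the isomorphism $\mathcal{H}(G,K) \simeq \mathfrak{Z}_\Omega$ recalled above, the operator $\theta_r$ acts on the whole representation by a scalar, equal to its eigenvalue on the spherical vector.

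Second I would invoke the classical Macdonald/Satake formula: for an unramified character $\chi = \chi_1 \otimes \ldots \otimes \chi_n$ of $T$, the operator $\theta_r$ acts on the $K$-spherical vector of the normalized induction $i_B^G(\chi)$ by the scalar
$$q^{r(n-r)/2}\, e_r\bigl(\chi_1(\varpi), \ldots, \chi_n(\varpi)\bigr),$$
where $e_r$ denotes the $r$-th elementary symmetric polynomial. If a self-contained verification is desired, one decomposes the double coset $K\, \mathrm{diag}(\varpi I_r, I_{n-r})\, K$ into left $K$-cosets $\bigsqcup_j g_j K$ parametrised by the $r$-dimensional $\mathbb{F}_q$-subspaces of $\mathcal{O}_F^n / \varpi \mathcal{O}_F^n$, and sums the values of the explicit spherical vector $\phi_0(bk) = \delta_B^{1/2}(b)\, \chi(b)$ over these cosets using the Iwasawa decomposition.

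Finally I would substitute $\chi_i = \psi'_i$, so $\chi_i(\varpi) = \psi_i(\varpi) \cdot q^{-(n-1)/2}$, and pull the factor $q^{-r(n-1)/2}$ out of the degree-$r$ elementary symmetric polynomial, obtaining
$$q^{r(n-r)/2} \cdot q^{-r(n-1)/2}\, e_r\bigl(\psi_1(\varpi), \ldots, \psi_n(\varpi)\bigr) \;=\; q^{r(1-r)/2}\, e_r\bigl(\psi_1(\varpi), \ldots, \psi_n(\varpi)\bigr),$$
which is precisely the scalar in the statement. The main obstacle is merely the bookkeeping between normalized and unnormalized induction and the compatibility of the $|\det|^{(n-1)/2}$-twist with the Satake parameters; no essential new input beyond the classical Macdonald calculation is required.
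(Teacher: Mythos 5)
Your proof is correct, and it organizes the argument slightly differently from the paper. The paper carries out the explicit coset computation from scratch with the twist $|\det|^{(n-1)/2}$ baked in: it identifies the spherical function $f^\circ(bk)=\delta_B^{1/2}(b)\psi'(b)$ on $\eta\otimes|\det|^{(n-1)/2}$, decomposes $K\,\mathrm{diag}(\varpi I_r,I_{n-r})\,K$ into left cosets $\beta K$ using a specific family of upper-triangular representatives indexed by subsets $S=\{\lambda_1<\dots<\lambda_r\}$, counts $|\Lambda_S|$, and sums $f^\circ(\beta)$ directly, so that the $q$-exponent $r(1-r)/2$ falls out of one consolidated bookkeeping step. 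You instead first absorb the twist into the inducing character via the projection formula $i_B^G(\psi)\otimes(\xi\circ\det)\simeq i_B^G(\psi\cdot(\xi\circ\det)|_T)$, quote the classical Macdonald/Satake eigenvalue $q^{r(n-r)/2}e_r(\chi_1(\varpi),\dots,\chi_n(\varpi))$ for $\theta_r$ on $i_B^G(\chi)$, and then specialize $\chi_i(\varpi)=\psi_i(\varpi)q^{-(n-1)/2}$ to recover the factor $q^{r(1-r)/2}$. The underlying computation is identical (indeed the Macdonald formula you cite is exactly what the paper's coset sum reproves in twisted form), so the difference is one of modularity: your route separates the classical input from the twist-tracking, which makes the normalization more transparent, while the paper's is self-contained and avoids having to invoke the black-box formula. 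Both yield the same scalar, and your sketch of the self-contained verification (parametrizing left cosets by $r$-dimensional $\mathbb{F}_q$-subspaces, equivalently by the paper's $\Lambda_S$) correctly identifies what would be needed to make it fully rigorous.
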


\begin{proof} We follow closely Bump's lecture notes \cite{Bump} on Hecke algebras, and adapts the argument therein for our needs. One may consult section 9, Proposition 40 in \cite{Bump} for more details. It follows from Iwasawa decomposition that the space of $K$-invariants of $(\eta \otimes |\det|^{\frac{n-1}{2}})^K$ is one dimensional and generated by the function $f^{\circ} : bk \mapsto \delta_B^{1/2}(b)\psi'(b)$, with $b \in B$ and $k \in K$ and $\psi'(b)=\psi_1(b_{11})|b_{11}|^{\frac{n-1}{2}}\ldots\psi_n(b_{nn})|b_{nn}|^{\frac{n-1}{2}}$. Hence $\theta_r.f^{\circ}=c.f^{\circ}$, then $c = \theta_r.f^{\circ}(1)$. Using the a double coset decomposition:
\[K \begin{pmatrix}
 \varpi I_{r}& 0 \\ 
0 & I_{n-r}
\end{pmatrix} K =  \bigcup_{\beta \in \Lambda} \beta K,\]

\noindent where $\Lambda$ is a complete set of representatives, we will compute $\theta_r.f^{\circ}(1)$. We have a freedom of choice for $\beta$'s, so we can put them in a specific form. More precisely we have 
\[K \begin{pmatrix}
 \varpi I_{r}& 0 \\ 
0 & I_{n-r}
\end{pmatrix} K = \bigcup_{S= \{\lambda_1,\ldots,\lambda_r\}} \bigcup_{\beta \in \Lambda_S} \beta K,\]

\noindent where $\lambda_1 < \ldots < \lambda_r$ and $\beta \in \Lambda_S$ if and only if the following four conditions are satisfied:
\begin{enumerate}
\item $\beta$ is upper triangular;
\item $\beta_{ii}=\varpi$ if $i\in S$ and $\beta_{ii}=1$ if $i \notin S$;
\item $\beta_{ij}$ is a representative of an element of $\mathcal{O}_F/\mathfrak{p}_F$ if $i<j$, $i\in S$ and $j \notin S$
\item all other entries are zero
\end{enumerate}

The number of non-zero entries outside diagonal in a matrix $\beta \in \Lambda_S$ is $\sum\limits_{i=1}^{r}(n-r-\lambda_i+i)=r(n-r)+r(r+1)/2-\sum\limits_{i=1}^{r}\lambda_i$. Therefore from
\[ |\Lambda_S| = q^{r(n-r)+r(r+1)/2-\sum\limits_{i=1}^{r}\lambda_i}\]

\noindent it follows that
\[\theta_r.f^{\circ}(1) = \sum\limits_{\lambda_1 < \ldots < \lambda_r} \sum\limits_{\beta \in \Lambda_S} f^{\circ}(\beta) =  \sum\limits_{\lambda_1 < \ldots < \lambda_r} |\Lambda_S| f^{\circ}(\beta). \]

\noindent Now let's compute $f^{\circ}(\beta) = \delta_B^{1/2}(\beta) \psi'(\beta)$. By definition we have 
\[ \delta_B^{1/2}(\beta)= \prod\limits_{i=1}^{n} |\beta_{ii}|^{\frac{n-2i+1}{2}} = q^{-\sum\limits_{i=1}^{r}\frac{n-2\lambda_i+1}{2}}=q^{-\frac{r(n+1)}{2}+\sum\limits_{i=1}^{r}\lambda_i},\]

\noindent and 
\[\psi'(\beta) = q^{-\frac{r(n-1)}{2}}\psi_{\lambda_1}(\varpi)\ldots\psi_{\lambda_r}(\varpi)\]

\noindent The total power of $q$ is:
\[ r(n-r)+r(r+1)/2-\sum\limits_{i=1}^{r}\lambda_i -\frac{r(n+1)}{2}+\sum\limits_{i=1}^{r}\lambda_i -\frac{r(n-1)}{2} = - \frac{r(r-1)}{2}\]

\noindent Finally
\[\theta_r.f^{\circ}(1) = q^{\frac{r(1-r)}{2}}\sum\limits_{\lambda_1 < \ldots < \lambda_r} \psi_{\lambda_1}(\varpi)\ldots\psi_{\lambda_r}(\varpi) \]
\end{proof}

Let  $x :R^{\square}_{\overline{r}}(\tau, \mathbf{v})[1/p] \longrightarrow E$ be an $E$-algebra homomorphism with $V_x$ the corresponding $n$-dimensional Galois representation of $G_F$. Here $V_x$ is already semi-stable, so $L=F$ and $f=f_0$. Let $D_{st}(V_x):=(B_{st} \otimes_{\Qp} V_x)^{G_F}$, by construction this is also $D_x$, the specialization of $D^{\square}_{\overline{r}}(\tau, \mathbf{v})$ at the closed point $x$. Then by Proposition \ref{2.1}, $WD(D_{st}(V_x))$ is the Weil-Deligne representation that corresponds to $\pi$ by the local Langlands correspondence, with normalization as in \cite{MR1876802}. Assume that $\pi$ is a generic representation. Let $\mathfrak{\eta}:= \mathrm{c\text{--} Ind}_{K}^{G} \sigma_{max} \otimes_{\mathfrak{Z}_{\Omega},\chi_{\pi}}E$ as in Lemma \ref{4.2}. The admissible filtered $(\varphi, N, \mathrm{Gal}(L/F))$-module $D_x=D_{st}(V_x)$ is equipped with Frobenius endomorphism $\phi_x$, which is the specialization of the universal Frobenius $\varphi$ on  $D^{\square}_{\overline{r}}(\tau, \mathbf{v})$ at $x$, i.e. $\varphi \otimes \kappa(x)=\phi_x$.

\begin{prop}\label{4.3}
Let  $x :R^{\square}_{\overline{r}}(\tau, \mathbf{v})[1/p] \longrightarrow E$, be an $E$-algebra homomorphism as above. The double coset operator $\theta_{r}$ acts on $\eta\otimes |\det|^{\frac{n-1}{2}}$ (equivalently on $\pi \otimes |\det|^{\frac{n-1}{2}}$) as scalar multiplication by $q^{\frac{r(1-r)}{2}}\mathrm{Tr}(\bigwedge^{r} (\phi_x)^f)$.
\end{prop}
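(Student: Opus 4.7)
The plan is to combine the preceding lemma, which computes the scalar action of $\theta_r$ on an unramified principal series, with a matching of Satake parameters and Frobenius eigenvalues via the local Langlands correspondence and Fontaine's recipe.

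First, since the inertial type $\tau$ is trivial and $\pi$ is generic, the cuspidal support of $\pi$ lies on the diagonal torus $T$: the supercuspidal representations $\pi_i$ appearing in Lemma \ref{4.2} reduce to unramified characters of $GL_1$, so the segments collapse and $\eta$ takes the form $\eta = i_B^G(\psi)$ with $\psi = \psi_1 \otimes \ldots \otimes \psi_n$ an unramified character of $T$. Applying the preceding lemma then gives that $\theta_r$ acts on $\eta \otimes |\det|^{\frac{n-1}{2}}$ by the scalar
\[ q^{\frac{r(1-r)}{2}} \, e_r(\psi_1(\varpi), \ldots, \psi_n(\varpi)), \]
where $e_r$ denotes the $r$-th elementary symmetric polynomial.

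The remaining task is to identify the multi-set $\{\psi_1(\varpi), \ldots, \psi_n(\varpi)\}$ with the multi-set of eigenvalues of $\phi_x^f$ on $D_x$ (or on a single coordinate factor $D_{\sigma_0}$ thereof), since then $e_r(\psi_1(\varpi), \ldots, \psi_n(\varpi)) = \mathrm{Tr}(\bigwedge^r \phi_x^f)$ immediately. By construction $\mathrm{rec}_p(\pi) = WD(D_x)$, so it is enough to compute the Frobenius eigenvalues on $WD(D_x)$ and on $WD(i_B^G(\psi))$ separately and match them. On the filtered $(\varphi, N)$-module side, Fontaine's recipe (exactly as recalled in the proof of Theorem \ref{4.4}) identifies a lift of geometric Frobenius acting on $WD(D_x)$ with $\phi_x^f$ restricted to any coordinate factor $D_{\sigma_0}$ in the decomposition $D_x = \prod_{\sigma_0 : L_0 \hookrightarrow E} D_{\sigma_0}$, which makes sense in our semi-stable setting because $L = F$ and $f = f_0$. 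On the smooth side, under the normalization of local Langlands of \cite{MR1876802} the Frobenius eigenvalues of the Weil--Deligne representation attached to the unramified principal series $i_B^G(\psi_1 \otimes \ldots \otimes \psi_n)$ are precisely $\psi_1(\varpi), \ldots, \psi_n(\varpi)$; the $|\det|^{\frac{n-1}{2}}$-twist appearing on the Hecke side of the equality is already absorbed into the object $\eta \otimes |\det|^{\frac{n-1}{2}}$ coming out of Lemma \ref{4.2}.

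The main obstacle is pinning down these two normalizations and verifying their compatibility. On the smooth side, one must check that the normalization of $\mathrm{rec}_p$ specified in \S\ref{sec: Not} indeed attaches Frobenius eigenvalues $\psi_i(\varpi)$ to the principal series (rather than some twist by a power of $q$ or $|\det|$). On the Galois side, Fontaine's recipe must be applied carefully: the factor $\frac{1}{[L_0 : \Qp]}$ that appeared in the trace formula of Theorem \ref{4.4}, the choice of geometric versus arithmetic Frobenius through the exponent $\alpha(w)$, and the fact that $\phi_x^f$ acts stably on each $D_{\sigma_0}$ must all be reconciled. Once the normalizations are lined up, the matching of the two descriptions is formal; in particular the independence of the formula from the monodromy operator $N$, anticipated in step 5 of the outline, is manifest, since only $\phi_x$ enters.
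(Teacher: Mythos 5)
Your proposal is correct and follows essentially the same route as the paper: apply the preceding lemma to write the scalar as $q^{r(1-r)/2}$ times the $r$-th elementary symmetric polynomial in the Satake parameters, then match the multi-set of Satake parameters with the Frobenius eigenvalues of $WD(D_x)$ via local Langlands and Fontaine's functor. The paper writes out the eigenvalues explicitly as $\chi_i(\varpi)q^{n_i-1},\ldots,\chi_i(\varpi)$ and asserts the match "via the classical local Langlands correspondence," so the normalization subtleties you flag are also left implicit in the original argument.
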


\begin{proof} With the notations of Lemma \ref{4.2} we have $s=1$ and $\pi_1=1$. Then there is a partition of $n$, $\sum_{i=1}^{t}n_i=n$, such that $\pi: = Q(\Delta_1) \times \ldots \times Q(\Delta_t)$, with $\Delta_i=\chi_i\otimes\ldots\otimes\chi_i|\cdot|^{n_i-1}$ and  $\chi_i\chi_j^{-1}\neq |\cdot|^{\pm 1}$ for all $i\neq j$. Then $\eta= \tilde{\Delta}_1 \times \ldots \times  \tilde{\Delta}_t$, where $\tilde{\Delta}_i=\chi_i|\cdot|^{1-n_i}\otimes\ldots\otimes\chi_i$. Define $\psi := \psi_1 \otimes \ldots \otimes \psi_n = \tilde{\Delta}_1 \otimes \ldots\otimes \tilde{\Delta}_t$ an unramified character of torus $T$, so that $\eta \simeq i_{B}^{G}(\psi)$.

By previous lemma, $\theta_{r}$ acts on one dimensional space $(\eta\otimes |\det|^{\frac{n-1}{2}})^{K}$ as scalar multiplication by  
\[q^{\frac{r(1-r)}{2}} s_r(\chi_1(\varpi)q^{n_{1}-1},\ldots,\chi_1(\varpi),\ldots,\chi_t(\varpi)q^{n_t-1},\ldots,\chi_t(\varpi))\]

\noindent where $s_r $ is the $r^{\mathrm{th}}$ symmetric polynomial in $n$ variables.

The eigenvalues of $WD(D_{st}(V_x))(Frob_{p})$ are $\chi_1(\varpi)q^{n_{1}-1}$,\ldots,$\chi_1(\varpi)$,\\\ldots,$\chi_t(\varpi)q^{n_t-1}$,\ldots,$\chi_t(\varpi)$, via the classical local Langlands correspondence. Then it follows that
$$s_r(\chi_1(\varpi)q^{n_{1}-1},\ldots,\chi_1(\varpi),\ldots,\chi_t(\varpi)q^{n_t-1},\ldots,\chi_t(\varpi))$$
$$=\mathrm{Tr}(\bigwedge^{r} WD(D_{st}(V_x))(Frob_{p}))=\mathrm{Tr}(\bigwedge^{r} (\phi_x^f))$$

\noindent where $Frob_p$ is the geometric Frobenius. Notice that the computations above do not depend on the choice of $Frob_p$.
\end{proof}

If for an embedding $\sigma$ the Hodge-Tate weights are $i_{\kappa, 1}<\ldots<i_{\kappa ,n}$, define $\xi_{j,\kappa}=-i_{\kappa, j} + (j-1)$. The highest weight of the algebraic representation $\sigma_{alg}$ with respect to the upper triangular matrices is given by $\mathrm{diag}(x_1,\ldots,x_n) \mapsto \prod_{j=1}^{n}\prod_{\kappa} \kappa(x_{j}^{\xi_{\kappa, j}})$. Then we have to rescale $\theta_{r}$ by the factor
$$\varpi^{-\sum_{\kappa}\sum\limits_{j=r}^{n}\xi_{\kappa,j}}$$
\noindent in order to be compatible with isomorphism, $\mathcal{H}(\sigma_{min}(\lambda)) \to \mathcal{H}(\sigma_{min})$ given by $f \mapsto f.\sigma_{alg}$.

Define $\tilde{\theta}_r = q^{\frac{r(r-1)}{2}}.\varpi^{-\sum_{\kappa}\sum\limits_{i=r}^{n}\xi_{\kappa, j}}.\theta_r$. Then we have a canonical  isomorphism $\mathcal{H}(\sigma_{min}) \simeq E[\tilde{\theta}_1,\ldots,\tilde{\theta}_{n-1},(\tilde{\theta}_{n})^{\pm 1}]$. We can summarize the results of this section with the following theorem:

\begin{thm}
If $\tau $ is trivial, then define  $\beta:\mathcal{H}(\sigma_{min}) \longrightarrow R^{\square}_{\overline{r}}(\tau, \mathbf{v})[1/p]$ by the assignment 
$$\tilde{\theta}_r \mapsto \varpi^{-\sum_{\kappa}\sum\limits_{i=r}^{n}\xi_{\kappa, j}}\mathrm{Tr}(\bigwedge^{r} \varphi^f),$$

\noindent where $\varphi$ is the universal Frobenius on $D^{\square}_{\overline{r}}(\tau, \mathbf{v})$. Then the map $\beta$ is an $E$-algebra homomorphism and $\beta$ interpolates the local Langlands correspondence, i.e. for any closed point $x$ of $R_{\tilde{\mathfrak{p}}}^{\square}(\sigma_{min})[1/p]$ with residue field $E_x$, the action of $\mathcal{H}(\sigma_{min})$ on a smooth $G$-representation $\pi_{sm}(r_x)$ factors as $\beta$ composed with the evaluation map  $R_{\tilde{\mathfrak{p}}}^{\square}(\sigma_{min})[1/p] \longrightarrow E_x$.
\end{thm}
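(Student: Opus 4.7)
My plan is to build $\beta$ by specifying its values on the generators of the Laurent polynomial algebra $\mathcal{H}(\sigma_{min}) \simeq E[\tilde{\theta}_1,\ldots,\tilde{\theta}_{n-1},\tilde{\theta}_n^{\pm 1}]$, to verify well-definedness by checking invertibility of the image of $\tilde{\theta}_n$, and to deduce the interpolation property directly from Proposition \ref{4.3} together with the canonical identifications recalled at the start of Subsection \ref{LA.2.2}.

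For well-definedness, prescribing the images of $\tilde{\theta}_1, \ldots, \tilde{\theta}_n$ yields an $E$-algebra map out of $E[\tilde{\theta}_1, \ldots, \tilde{\theta}_n]$ automatically; the only point to check is that $\beta(\tilde{\theta}_n)$ is a unit so that $\beta$ descends to the Laurent ring. Now $\mathrm{Tr}(\bigwedge^n \varphi^f) = \det_{L_0}(\varphi^f)$ in $R^{\square}_{\overline{r}}(\tau,\mathbf{v})[1/p]$, and at every closed point $x$ the specialisation $\phi_x$ is a $\varphi_0$-semilinear bijection of the finite-free $L_0\otimes_{\Qp} E_x$-module $D_x$, since $D_x$ is an admissible filtered $(\varphi,N)$-module. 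Hence $\det(\phi_x^f) \in E_x^{\times}$. Because $R^{\square}_{\overline{r}}(\tau,\mathbf{v})[1/p]$ is reduced and Jacobson, an element non-vanishing at every closed point is a unit; so $\beta(\tilde{\theta}_n)$ is a unit and $\beta$ extends uniquely to the Laurent algebra.

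For interpolation, fix a closed point $x$ with residue field $E_x$. By Proposition \ref{4.3}, the double-coset operator $\theta_r$ acts on the $K$-spherical line of $\eta \otimes |\det|^{(n-1)/2}$ — equivalently, by Lemma \ref{4.2}, on that of $\pi \otimes |\det|^{(n-1)/2}$ — as multiplication by $q^{r(1-r)/2}\mathrm{Tr}(\bigwedge^r \phi_x^f)$. Transporting along the chain of canonical isomorphisms $\mathcal{H}(\sigma_{min}(\lambda)) \simeq \mathfrak{Z}_{\Omega} \simeq \mathcal{H}(\sigma_{max}(\lambda)) \simeq \mathcal{H}(G,K)$ and applying the $t_W$-twist of Subsection \ref{LA.2.1} translates this scalar into the action of the corresponding element of $\mathfrak{Z}_{\Omega}$ on $\pi_{sm}(r_x)$. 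The rescaling $q^{r(r-1)/2}$ built into $\tilde{\theta}_r$ cancels the Macdonald/Satake $q$-factor from Proposition \ref{4.3}, while the remaining factor $\varpi^{-\sum_{\kappa}\sum_{j=r}^{n}\xi_{\kappa,j}}$ absorbs the algebraic twist by $\sigma_{alg}$ coming from the isomorphism $\mathcal{H}(\sigma_{min}(\lambda)) \simeq \mathcal{H}(\sigma_{min})$, $f \mapsto f \cdot \sigma_{alg}$: the highest-weight character of $\sigma_{alg}$ evaluated on the diagonal representative $\mathrm{diag}(\varpi I_r, I_{n-r})$ of $\theta_r$ is $\prod_{\kappa}\varpi^{\sum_{j=1}^{r}\xi_{\kappa,j}}$, which combined with the normalisation absorbed by $\tilde{\theta}_n^{\pm 1}$ rearranges to the stated prefactor. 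Consequently $x(\beta(\tilde{\theta}_r))$ equals the scalar by which $\tilde{\theta}_r$ acts on $\pi_{sm}(r_x)$.

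The main obstacle is the careful bookkeeping of the three simultaneous twists — the Macdonald/Satake $q$-factor $q^{r(1-r)/2}$, the smooth twist $|\det|^{(n-1)/2}$ built into the normalisation of $\mathrm{rec}$ in \cite{MR1876802}, and the algebraic twist by $\sigma_{alg}$ — and verifying that their combined effect is absorbed into the single explicit prefactor defining $\tilde{\theta}_r$. Once the compatibility is checked at a single sufficiently generic $E$-valued closed point (where the ingredients reduce to an unramified principal series calculation as in the proof of Proposition \ref{4.3}), injectivity of the evaluation map $R^{\square}_{\overline{r}}(\tau,\mathbf{v})[1/p] \hookrightarrow \prod_x E_x$ coming from reducedness and Jacobson-ness propagates the identity to the whole spectrum, so the interpolation property pins $\beta$ down uniquely.
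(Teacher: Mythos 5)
Your proposal follows essentially the same route as the paper's own proof: define $\beta$ on the generators $\tilde\theta_r$, check it gives a ring homomorphism, then deduce the interpolation property from Proposition~\ref{4.3} together with the Satake-type identifications and the Zariski density of smooth points. A genuine point in your favour is the well-definedness argument for $\beta(\tilde\theta_n)$: you correctly observe that one must check $\beta(\tilde\theta_n)$ is a unit so that the assignment descends from the polynomial ring to the Laurent ring, and you derive this from bijectivity of the Frobenius on each specialisation $D_x$ plus the pointwise criterion for units. The paper simply says the target is a polynomial algebra and does not address the inverted variable at all, so your argument fills a real gap there (and you could simplify it: Jacobson-ness is not actually needed, since any element lying outside every maximal ideal generates the unit ideal in any commutative ring). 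On the other hand, the paper's proof additionally shows that weak admissibility forces the image of $\beta$ to land in the normalization of $R^{\square}_{\overline{r}}(\tau,\mathbf{v})$, which you omit, though this integrality claim is not part of the theorem as stated.

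The one place your write-up is genuinely imprecise is the density step. You write \textquotedblleft once the compatibility is checked at a single sufficiently generic $E$-valued closed point \ldots\ propagates the identity to the whole spectrum.\textquotedblright\ Checking an identity at a single closed point says nothing about the ring element; what you need is the set of all smooth closed points, which is Zariski dense by Theorem~3.3.4 of Kisin. Moreover, Proposition~\ref{4.3} (via Lemma~\ref{4.2}) only computes the Bernstein-centre character at points $x$ where $\pi_{sm}(r_x)$ is generic, i.e.\ at smooth points. To get the interpolation property at \emph{all} closed points, as the theorem asserts, you must either extend the direct computation to non-generic $\pi_{sm}(r_x)$ (noting that the character of $\mathfrak{Z}_\Omega$ only depends on the cuspidal support, which is still given by the Frobenius eigenvalues), or, as the paper does, compare $\beta$ with the interpolation map of Theorem~\ref{4.4} constructed via pseudo-representations, observe they agree on the dense set of smooth points and hence agree everywhere, and then transfer the interpolation property from that map. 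Your proposal does neither explicitly; the appeal to \textquotedblleft injectivity of the evaluation map\textquotedblright\ is not on its own sufficient to carry the conclusion from smooth points to all closed points.
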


\begin{proof}
Since $E[\tilde{\theta}_1,\ldots,\tilde{\theta}_{n-1},(\tilde{\theta}_{n})^{\pm 1}]$ is a polynomial $E$-algebra, the previous assignment $\beta$ is a ring homomorphism. Moreover the weak admissibility of $D_x$ implies that $val_F(\varpi^{-\sum_{\sigma}\sum\limits_{i=r}^{n}\xi_{j,\sigma}}\mathrm{Tr}(\bigwedge^{r} \phi_x^f)) \geq 0$, and so $x(\beta(\theta_r))$ belongs to the ring of integers, for all $r$. It follows that the image of the map $\beta$ is contained in the normalization of $R^{\square}_{\overline{r}}(\tau, \mathbf{v})[1/p]$, by Proposition 7.3.6 \cite{MR1383213}.

As it was observed in point 7. in section \ref{LA.2.1} such a map interpolates the local Langlands correspondence on all the closed points.
\end{proof}

\section{Local deformation rings}\label{LA.3}

We begin this section with some elementary linear algebra. Those preparatory results will help us to deal with monodromy of potentially semi-stable Galois representations. Indeed we will introduce locally algebraic representations $\sigma_{\mathcal{P}}$, where $\mathcal{P}$ is a partition-valued function. The properties of smooth part $\sigma_{\mathcal{P}}(\lambda)$ of $\sigma_{\mathcal{P}}$ were described in \cite[Theorem 3.7]{MR3769675} and it was explained how the monodromy of an irreducible generic representation can be read of the $\sigma_{\mathcal{P}}(\lambda)$'s that it contains. In a similar way, we may study the support of $M_{\infty}(\sigma_{min}^{\circ})$ by introducing a stratification that depends on the $\sigma_{\mathcal{P}}$'s. This will be dealt with in the next section. Here we will introduce a stratification of $R_{\tilde{\mathfrak{p}}}^{\square}(\sigma_{min})$ with respect to any partition-valued function $\mathcal{P}$, more precisely we will construct the rings  $R^{\square}_{\tilde{\mathfrak{p}}}(\sigma_{\mathcal{P}})$, which are reduced, $p$-torsion free quotient of $R^{\square}_{\overline{r}}(\tau, \mathbf{v})$, satisfying the following property: $x \in \Sp R^{\square}_{\tilde{\mathfrak{p}}}(\sigma_{\mathcal{P}})[1/p]$ if and only if $\mathcal{P}_x \geq \mathcal{P}$. 

\medskip

Recall a few facts about partitions. Let $(\lambda_1,\ldots,\lambda_l)$ be a partition of $n$, i.e. we have $n=\lambda_1+\ldots+\lambda_l$ with $\lambda_1\geq \ldots \geq \lambda_l>0$. We say that a partition $\lambda^{c}$ is conjugate of $\lambda=(\lambda_{1},\ldots, \lambda_{l})$ if it is represented by the reflected diagram of the one associated to $\lambda$ with respect to the line $y=-x$ with the coordinate of the upper left corner is taken to be $(0,0)$. We have that $\lambda_i^{c} = |\left\lbrace k : \lambda_k \geq i\right\rbrace |$.

Let $M$ be any field, $V$ a $n$-dimensional $M$-vector space and $N:V \to V$ a nilpotent endomorphism. Then the Jordan normal form of $N$ is uniquely determined up conjugacy by a partition $(n_1,\ldots,n_t)$, i.e the blocks are ordered by decreasing size $n_1 \geq \ldots \geq n_t$.

\begin{lemma}\label{4.5}
Let $M$ be any field, $V$ a $n$-dimensional $M$-vector space, with two nilpotent endomorphisms $N:V \to V$ and $N':V \to V$. Let $(n_1,\ldots,n_t)$ (resp. $(n'_1,\ldots,n'_s)$) be a partition that encodes the Jordan normal form of $N$ (resp.$N'$). Then the following statements are equivalent:
\begin{enumerate}
\item $\forall i$, $\dim \mathrm{Ker}(N^{i}) \leq \dim \mathrm{Ker}(N'^{i})$.
\item $\forall i$, $\sum\limits_{k=1}^{i} n_k \geq \sum\limits_{k=1}^{i} n'_k$.
\end{enumerate}
\end{lemma}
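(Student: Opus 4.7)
The plan is to translate both conditions into statements about partitions of $n$ in the dominance order, and then invoke the classical duality between dominance and conjugation.

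First, I would establish the key formula
$$\dim \mathrm{Ker}(N^{i}) \;=\; \sum_{k=1}^{t}\min(i, n_{k}) \;=\; \sum_{j=1}^{i} n^{c}_{j},$$
and the analogous identity for $N'$. The first equality reduces, via the Jordan decomposition, to the case of a single block of size $n_{k}$, where $N^{i}$ visibly has kernel of dimension $\min(i, n_{k})$. The second equality is a direct rewriting using the definition $n^{c}_{j} = |\{k : n_{k} \geq j\}|$ (count lattice points under the staircase of the Young diagram either row-by-row or column-by-column).

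Using this formula, condition (1) becomes
$$\sum_{j=1}^{i} n^{c}_{j} \;\leq\; \sum_{j=1}^{i} (n')^{c}_{j} \qquad \text{for all } i,$$
i.e.\ $(n')^{c}$ dominates $n^{c}$. Condition (2) is precisely the statement that $n$ dominates $n'$ in the dominance order. Since both partitions have the same total $|n| = |n'| = \dim V$, the equivalence of the two conditions then follows from the classical partition duality: for partitions $\lambda, \mu$ of a common integer, $\lambda$ dominates $\mu$ if and only if $\mu^{c}$ dominates $\lambda^{c}$.

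The only non-formal step is this duality, which I would either cite from a standard combinatorics reference or derive on the spot from the identity $\sum_{j=1}^{i}\lambda^{c}_{j} = |\lambda| - \sum_{k}\max(0,\lambda_{k}-i)$, by noting that inequalities between the left-hand sides for $\lambda = n$ and $\mu = n'$ translate, after using $|n|=|n'|$ and cutting at appropriate indices $m$, into inequalities between partial sums $\sum_{k=1}^{m} n_{k}$ and $\sum_{k=1}^{m} n'_{k}$. I do not anticipate any real obstacle: the lemma is essentially a repackaging of the standard duality between the dominance order and conjugation of Young diagrams, translated into the language of nilpotent endomorphisms via the formula above.
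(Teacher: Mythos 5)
Your proposal is correct and takes essentially the same route as the paper: reduce $\dim\mathrm{Ker}(N^i)$ to the partial sums of the conjugate partition via $\dim\mathrm{Ker}(N^i)=\sum_k\min(i,n_k)=\sum_{j\leq i}n^c_j$, and then invoke the anti-monotonicity of conjugation with respect to the dominance order on partitions of a fixed integer. The paper's proof introduces the "kernel partition" $\kappa_j=\dim\mathrm{Ker}(N^j)-\dim\mathrm{Ker}(N^{j-1})$, identifies it as $(n_1,\ldots,n_t)^c$, and then asserts the equivalence with (2) without spelling out the duality step; you make that last step explicit (with a usable hint for deriving it), which is the only real difference.
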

\begin{proof} The Jordan normal form gives an isomorphism $N \simeq \bigoplus_{k=1}^{t}N_k$, where $N_k$ is a nilpotent operator of maximal rank on a $n_k$-dimensional vector space. Then:
\[\dim \mathrm{Ker}(N_k^{i})= \begin{cases}
        i, & \text{for } i\leq n_k\\
        n_k, & \text{for } i> n_k
        \end{cases}\]

\noindent and $\dim \mathrm{Ker}(N^{i})= \sum\limits_{k=1}^{t} \dim \mathrm{Ker}(N_k^{i}) =\sum\limits_{k=1}^{t} \min(i,n_k)$. 

\noindent Let $\kappa_j = \dim \mathrm{Ker}(N^{j}) - \dim \mathrm{Ker}(N^{j-1})$ for $j \geq 1$ and $\dim \mathrm{Ker}(N^{0})=0$. We get $(\kappa_1,\kappa_2,\ldots)$ a partition of $n$ and we will call this partition a kernel partition of the nilpotent operator $N$. By the description of $\dim \mathrm{Ker}(N^{i})$ in terms of the partition $(n_1,\ldots,n_t)$, we see that $\kappa_i = |\left\lbrace k : n_k \geq i\right\rbrace |$. So the partition $(\kappa_1,\kappa_2,\ldots)$ is the dual of the partition $(n_1,\ldots,n_t)$. Let now $(\kappa_1',\kappa_2',\ldots)$ be the kernel partition of $N'$. Then the inequalities:
$$\dim \mathrm{Ker}(N^{i})=\sum\limits_{j=1}^{i} \kappa_j \leq \dim \mathrm{Ker}(N'^{i})= \sum\limits_{j=1}^{i} \kappa_j',$$

\noindent $\forall i$, are equivalent to the inequalities from 2. This concludes the proof.
\end{proof}

\begin{lemma}\label{4.6}
Let $A$ be a commutative ring, $V$ projective finitely generated $A$-module and $N: V \longrightarrow V$ a nilpotent $A$-linear operator. Then the set 
\[\left\lbrace \mathfrak{p} \in \Sp A | \dim_{\kappa(\mathfrak{p})}(\mathrm{Coker} N) \otimes_{A} \kappa(\mathfrak{p}) \geq m \right\rbrace \]
\noindent is closed for any integer $m$. 

For a point $x \in \Sp A$, the shape (Jordan normal form) of nilpotent operator $N \otimes \kappa(x)$ is given by a partition $\mathcal{P}_x$ and this partition determines uniquely, up to conjugacy, a Jordan normal form of a nilpotent operator. Define a partial order $\leq$ on partitions which is the reverse of so-called natural or dominance partial order (\cite{MR3077154} chapter 5 section 5.1.4). Then for all integers $i$,
\[\dim_{\kappa(x)}(\mathrm{Coker} N^{i}) \otimes_{A} \kappa(x) \leq \dim_{\kappa(y)}(\mathrm{Coker} N^{i}) \otimes_{A} \kappa(y)\]

\noindent if and only if $\mathcal{P}_x \leq \mathcal{P}_y$.

\end{lemma}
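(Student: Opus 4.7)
The plan is to handle the two assertions of the lemma separately and in each case reduce them to something standard. For the first assertion, I would invoke the general upper semicontinuity of fiber dimensions for a finitely generated module over a commutative ring, applied to $M := \mathrm{Coker}(N)$ (which is finitely generated because $V$ is). Concretely, choosing locally on $\Sp A$ a finite presentation $A^{p} \xrightarrow{\Phi} A^{q} \to M \to 0$, the fiber $M \otimes_{A} \kappa(\mathfrak{p})$ has $\kappa(\mathfrak{p})$-dimension $q - \mathrm{rank}(\Phi \otimes \kappa(\mathfrak{p}))$, and the locus where this rank drops to $\leq q - m$ is cut out by the vanishing of the $(q-m+1) \times (q-m+1)$-minors of a matrix representing $\Phi$, hence Zariski-closed on that affine chart. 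Gluing over a cover of $\Sp A$ yields the desired closed subset globally.

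For the second assertion, the key observation is that for a nilpotent endomorphism $T$ of a finite-dimensional vector space, rank-nullity gives $\dim \mathrm{Coker}(T^{i}) = \dim \mathrm{Ker}(T^{i})$. Combining this with the right-exactness identity
\[(\mathrm{Coker}\, N^{i}) \otimes_{A} \kappa(x) \;=\; \mathrm{Coker}\bigl((N \otimes \kappa(x))^{i}\bigr),\]
the hypothesis of the lemma becomes equivalent to
\[\dim_{\kappa(x)} \mathrm{Ker}\bigl((N \otimes \kappa(x))^{i}\bigr) \;\leq\; \dim_{\kappa(y)} \mathrm{Ker}\bigl((N \otimes \kappa(y))^{i}\bigr) \quad \text{for all } i.\]

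I would then apply Lemma \ref{4.5} to the two nilpotent operators $N \otimes \kappa(x)$ and $N \otimes \kappa(y)$; the proof of that lemma only extracts information from the Jordan normal form and so applies to nilpotent endomorphisms of two possibly distinct vector spaces. The equivalence (1) $\Leftrightarrow$ (2) there converts the above kernel inequalities into the partial-sum inequalities $\sum_{k=1}^{i} (\mathcal{P}_{x})_{k} \geq \sum_{k=1}^{i} (\mathcal{P}_{y})_{k}$ for all $i$, which is precisely the dominance of $\mathcal{P}_{x}$ over $\mathcal{P}_{y}$ in the natural partial order, i.e., $\mathcal{P}_{x} \leq \mathcal{P}_{y}$ in the reverse dominance order adopted in the statement. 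No serious obstacle is anticipated: part one is textbook upper semicontinuity of fiber dimensions, and part two is a repackaging of Lemma \ref{4.5} via the rank-nullity identification of kernel and cokernel dimensions in finite dimension.
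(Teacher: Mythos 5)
Your proof is correct. For the second assertion it is identical in substance to the paper's: pass from $\mathrm{Coker}$ to $\mathrm{Ker}$ by rank--nullity in finite dimension, use right-exactness of $\otimes_A\kappa(x)$ to commute cokernel and base change, and then invoke Lemma \ref{4.5}.

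For the first assertion your route differs from the paper's. You prove upper semicontinuity of fiber dimension by working locally where $V$ is free, so that $N$ becomes a matrix $\Phi$ and the locus $\{\dim_{\kappa(\mathfrak{p})}(\mathrm{Coker}\,N)\otimes\kappa(\mathfrak{p})\ge m\}$ is the determinantal locus where all $(q-m+1)\times(q-m+1)$ minors of $\Phi$ vanish, i.e.\ the closed subscheme cut out by a Fitting ideal; this genuinely uses the projectivity (local freeness) of $V$ to get the finite presentation. The paper instead argues directly with Nakayama's lemma: given any finite set of generators $m_1,\dots,m_n$ of $C=\mathrm{Coker}\,N$ and a prime $\mathfrak{p}$ at which the fiber dimension is $k$, it lifts a $\kappa(\mathfrak{p})$-basis to a minimal generating set of $C_{\mathfrak{p}}$, writes the $m_i$ in terms of these lifts, clears denominators, and obtains a basic open $D(b)$ on which the fiber dimension stays $\le k$; this makes the fiber-dimension function upper semicontinuous without ever choosing a presentation and works for any finitely generated module, not just a cokernel of an endomorphism of a projective. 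Your Fitting-ideal argument is the more standard textbook statement and packages the closedness scheme-theoretically; the paper's is slightly more elementary and avoids localizing to trivialize $V$. Both are valid, and your gluing remark at the end correctly handles the passage from the local charts to all of $\Sp A$.
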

\begin{proof} Let's prove the first assertion. Let $m_1,\ldots m_n$, any set of generators of  $C:=\mathrm{Coker} N$ over $A$. It would be enough to prove that the set $U:= \left\lbrace \mathfrak{p} \in \Sp A | \dim_{\kappa(\mathfrak{p})}C \otimes_{A} \kappa(\mathfrak{p}) < n \right\rbrace$ is open. Let $\mathfrak{p} \in \Sp A$ and $\bar{x}_1,\ldots,\bar{x}_k $ be a basis of  $\kappa(\mathfrak{p})$-vector space $C_{\mathfrak{p}}/\mathfrak{p}C_{\mathfrak{p}}$. It follows from Nakayama's lemma the lifts $x_1,\ldots,x_k$ to $C_{\mathfrak{p}}$, form a minimal generating set of $C_{\mathfrak{p}}$ over $A_{\mathfrak{p}}$. Write $m_i/1 = \sum\limits_{j=1}^{k} (a_{ij}/b_{ij})x_j$ and let $b=\prod b_{ij}$. For any $\mathfrak{q} \in D(b)$, $x_1,\ldots,x_k$ is still a generating set of $C_{\mathfrak{q}}$ over $A_{\mathfrak{q}}$. Again, by Nakayama's lemma it follows that $\dim_{\kappa(\mathfrak{p})}C \otimes_{A} \kappa(\mathfrak{p}) \leq k < n$, so that $D(b) \subseteq U$. Therefore $U$ is open. 

The second assertion follows from the previous lemma, because \\ \noindent $\dim \mathrm{Ker}(N^{i} \otimes \kappa(x))= \dim \mathrm{Coker}(N^{i} \otimes \kappa(x))$ and we have an isomorphism $\mathrm{Coker}(N^{i} \otimes \kappa(x)) \simeq (\mathrm{Coker} N^{i}) \otimes_{A} \kappa(x)$ since the tensor product is right-exact.

\end{proof}

Recall from the previous section that we have an endomorphism $\varphi$ on $D^{\square}_{\overline{r}}(\tau, \mathbf{v})$. Again by Theorem (2.5.5)\cite{MR2373358}, there is a universal monodromy operator $N : D^{\square}_{\overline{r}}(\tau, \mathbf{v}) \longrightarrow D^{\square}_{\overline{r}}(\tau, \mathbf{v})$ which is $F_{0}\otimes_{\Qp}R^{\square}_{\overline{r}}(\tau, \mathbf{v})[1/p]$-linear. Observe, that the monodromy of $WD(r_x)$ is the specialization of $N$ at a closed point $x \in \Sp R^{\square}_{\overline{r}}(\tau, \mathbf{v})[1/p]$.

Let $\mathcal{P}$ be a partition-valued function as in \cite{MR1728541}. Apply the previous lemma with $A=F_{0}\otimes_{\Qp}R^{\square}_{\overline{r}}(\tau, \mathbf{v})[1/p]$ and $V=D^{\square}_{\overline{r}}(\tau, \mathbf{v})$ to get that the set 
\[\left\lbrace x \in \Sp R^{\square}_{\overline{r}}(\tau, \mathbf{v})[1/p] | \mathcal{P}_x \geq \mathcal{P} \right\rbrace\]
 \[= \bigcap_{i \geq 1} \left\lbrace x \in \Sp R^{\square}_{\overline{r}}(\tau, \mathbf{v})[1/p] | \dim_{\kappa(x)}(\mathrm{Coker} N^{i}) \otimes_{A} \kappa(x) \geq m_i \right\rbrace \]

\noindent is closed, with $m_i=\sum_{\sigma}\sum_{k}\min(i,\mathcal{P}(\sigma)(k))$. Hence this set, is of the form $V(I_{\mathcal{P}})$,  where $I_{\mathcal{P}}$ is an ideal in $R^{\square}_{\overline{r}}(\tau, \mathbf{v})$, such that the quotient is reduced. We can now make the following definition:

\begin{definition}
Define the ring $R^{\square}_{\tilde{\mathfrak{p}}}(\sigma_{\mathcal{P}}):=R^{\square}_{\overline{r}}(\tau, \mathbf{v})/I_{\mathcal{P}}$, this ring has the following property: $x \in \Sp R^{\square}_{\tilde{\mathfrak{p}}}(\sigma_{\mathcal{P}})[1/p]$ if and only if $\mathcal{P}_x \geq \mathcal{P}$. 
\end{definition}

Observe that $R^{\square}_{\tilde{\mathfrak{p}}}(\sigma_{\mathcal{P}})$ is a reduced, $p$-torsion free quotient of $R^{\square}_{\overline{r}}(\tau, \mathbf{v})$. For $\mathcal{P}$ maximal partition, which we denote by $\sigma_{\mathcal{P}}=\sigma_{max}$, we get a potentially crystalline deformation ring and for $\mathcal{P}$ minimal, which we denote by $\sigma_{\mathcal{P}}=\sigma_{min}$, we get a potentially semi-stable deformation ring $R_{\tilde{\mathfrak{p}}}^{\square}(\sigma_{min}):=R^{\square}_{\overline{r}}(\tau, \mathbf{v})$. 

An easy consequence of the construction above is the following lemma:

\begin{lemma}
We have  $\dim R_{\tilde{\mathfrak{p}}}^{\square}(\sigma_{min}) = \dim R_{\tilde{\mathfrak{p}}}^{\square}(\sigma_{\mathcal{P}})$.
\end{lemma}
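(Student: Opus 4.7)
My plan is to sandwich the Krull dimension of $R^{\square}_{\tilde{\mathfrak{p}}}(\sigma_{\mathcal{P}})$ between the dimensions of the potentially crystalline ring $R^{\square}_{\tilde{\mathfrak{p}}}(\sigma_{max})$ and the full potentially semi-stable ring $R^{\square}_{\tilde{\mathfrak{p}}}(\sigma_{min}) = R^{\square}_{\overline{r}}(\tau,\mathbf{v})$, and then to invoke the theorem of Kisin that the potentially crystalline and potentially semi-stable framed deformation rings, for fixed regular Hodge--Tate weights and a fixed inertial type, have the same dimension.

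The key observation is that a potentially crystalline representation has $N=0$, so its associated partition at every embedding is $(1,1,\ldots,1)$, which is the minimum in the usual dominance order and therefore the maximum $\mathcal{P}_{max}$ in the paper's reverse-dominance order. Consequently $\mathcal{P}_x = \mathcal{P}_{max}\geq \mathcal{P}$ for every partition-valued function $\mathcal{P}$, so every potentially crystalline closed point of $\Sp R^{\square}_{\overline{r}}(\tau,\mathbf{v})[1/p]$ automatically lies in $\Sp R^{\square}_{\tilde{\mathfrak{p}}}(\sigma_{\mathcal{P}})[1/p]$. In other words, in the generic fibre we have an inclusion of closed subsets
\[\Sp R^{\square}_{\tilde{\mathfrak{p}}}(\sigma_{max})[1/p] \;\subseteq\; \Sp R^{\square}_{\tilde{\mathfrak{p}}}(\sigma_{\mathcal{P}})[1/p] \;\subseteq\; \Sp R^{\square}_{\tilde{\mathfrak{p}}}(\sigma_{min})[1/p].\]

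Since each of the three rings is, by construction, a reduced, $p$-torsion free, Jacobson quotient of $R^{\square}_{\overline{r}}(\tau,\mathbf{v})$ cut out by the locus on the generic fibre described above, this inclusion of closed subsets translates into a chain of surjections
\[R^{\square}_{\tilde{\mathfrak{p}}}(\sigma_{min}) \;\twoheadrightarrow\; R^{\square}_{\tilde{\mathfrak{p}}}(\sigma_{\mathcal{P}}) \;\twoheadrightarrow\; R^{\square}_{\tilde{\mathfrak{p}}}(\sigma_{max}),\]
and hence
\[\dim R^{\square}_{\tilde{\mathfrak{p}}}(\sigma_{max}) \leq \dim R^{\square}_{\tilde{\mathfrak{p}}}(\sigma_{\mathcal{P}}) \leq \dim R^{\square}_{\tilde{\mathfrak{p}}}(\sigma_{min}).\]

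Finally, by Theorem 3.3.8 of \cite{MR2373358}, the generic fibres $\Sp R^{\square}_{\tilde{\mathfrak{p}}}(\sigma_{min})[1/p]$ and $\Sp R^{\square}_{\tilde{\mathfrak{p}}}(\sigma_{max})[1/p]$ are both formally smooth and equidimensional of the same dimension $n^2+[F:\Qp]\frac{n(n-1)}{2}$. Since all three rings are $p$-torsion free complete local Noetherian, their Krull dimensions equal the Krull dimensions of their generic fibres plus one, so the outer terms of the sandwich coincide and the result follows. I do not expect a substantive obstacle: the only care needed is in (i) matching the paper's reverse-dominance $\mathcal{P}_{max}$ with the locus $N=0$ (i.e.\ the potentially crystalline locus), and (ii) invoking Kisin's dimension formula, which is insensitive to the crystalline-versus-semi-stable distinction.
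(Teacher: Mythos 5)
Your proof is correct and takes essentially the same approach as the paper: sandwich $\dim R^{\square}_{\tilde{\mathfrak{p}}}(\sigma_{\mathcal{P}})$ between the dimensions of $R^{\square}_{\tilde{\mathfrak{p}}}(\sigma_{max})$ and $R^{\square}_{\tilde{\mathfrak{p}}}(\sigma_{min})$ via the chain of surjections, then invoke Kisin's dimension formula to show the two endpoints agree. You additionally spell out why the surjection $R^{\square}_{\tilde{\mathfrak{p}}}(\sigma_{\mathcal{P}}) \twoheadrightarrow R^{\square}_{\tilde{\mathfrak{p}}}(\sigma_{max})$ holds (potentially crystalline points have $N=0$, hence $\mathcal{P}_x = \mathcal{P}_{max} \geq \mathcal{P}$), a step the paper asserts without comment; the only minor inaccuracy is the claim that the potentially semi-stable generic fibre is formally smooth, which is not needed and should be weakened to equidimensionality.
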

\begin{proof} Notice that Theorem (3.3.4) \cite{MR2373358} gives the dimension of $R_{\tilde{\mathfrak{p}}}^{\square}(\sigma_{min})$ and by Theorem (3.3.8) \cite{MR2373358} we know that  $\dim R_{\tilde{\mathfrak{p}}}^{\square}(\sigma_{max}) = \dim R_{\tilde{\mathfrak{p}}}^{\square}(\sigma_{min})$. Since we have, $R_{\tilde{\mathfrak{p}}}^{\square}(\sigma_{min})  \twoheadrightarrow R_{\tilde{\mathfrak{p}}}^{\square}(\sigma_{\mathcal{P}}) \twoheadrightarrow R_{\tilde{\mathfrak{p}}}^{\square}(\sigma_{max})$. 

It follows that $\dim R_{\tilde{\mathfrak{p}}}^{\square}(\sigma_{max})\leq \dim R_{\tilde{\mathfrak{p}}}^{\square}(\sigma_{\mathcal{P}}) \leq \dim R_{\tilde{\mathfrak{p}}}^{\square}(\sigma_{min})$, so that $\dim R_{\tilde{\mathfrak{p}}}^{\square}(\sigma_{min}) = \dim R_{\tilde{\mathfrak{p}}}^{\square}(\sigma_{\mathcal{P}})$.
\end{proof}

\section{Local-Global compatibility}\label{LA.4}

We will begin this section by recalling a few important definitions from \cite{MR3529394}. In \cite[section 2.1]{MR3529394} the authors construct a suitable globalization $\overline{\rho} :  G_{\tilde{F}^{+}} \longrightarrow \mathcal{G}_n(\mathbb{F})$ of  $\overline{r} : G_F \longrightarrow GL_n(\mathbb{F})$,  where $\tilde{F}$ is an imaginary CM field with maximal totally real subfield $\tilde{F}^{+}$, and $\mathcal{G}_n$ is a group scheme over $\Z$ defined as a semi-direct product of $GL_n \times GL_1$ by the group$\{1,j \}$ (cf. section 1.8, p.14 of \textit{op. cit.}). Then theauthors use $\overline{\rho}$ to carry out the Taylor--Wiles--Kisin patching method to construct $M_{\infty}$ (cf. section 2.8, p.29 of \textit{op. cit.}).
For the purpose of this paper it is enough to view $M_{\infty}$ as an $R_{\infty}[G]$-module, such that it is finitely generated over $R_{\infty}[[GL_n(\Z_p)]]$, where $R_\infty$(cf. \cite[section 2.8, p.27]{MR3529394}) is a complete Noetherian local $R_{\tilde{\mathfrak{p}}}^{\square}$-algebra with residue field $\mathbb{F}$. Finally let us observe that we need to assume that $\overline{r}$ has a potentially diagonalizable lift in order to construct $\overline{\rho}$. However this assumption is not needed for our main result in the light of \cite[Remark 2.15]{MR3529394}.

In this section we will study the support of $M_{\infty}(\sigma_{min}^{\circ})$ by introducing a stratification that depends on the $\sigma_{\mathcal{P}}$'s. This will allow us to have finer control on the monodromy operator. 

The main result of this section is  Theorem \ref{4.10}. This result tells us that the action of $\mathcal{H}(\sigma_{min})$ on $M_{\infty}(\sigma_{min}^{\circ})$ is compatible with the interpolation map $\mathcal{H}(\sigma_{min}) \longrightarrow R_{\tilde{\mathfrak{p}}}^{\square}(\sigma_{min})[1/p]$, constructed previously. Most of the proofs in this section are very similar to the ones given in \cite[section 4]{MR3529394}. 

\medskip

Let $\mathcal{P}$ be a partition-valued function. Define $\sigma_{\mathcal{P}} := \sigma_{\mathcal{P}}(\lambda) \otimes \sigma_{alg}$, so that $(\sigma_{\mathcal{P}})_{sm}=\sigma_{\mathcal{P}}(\lambda)$ and $(\sigma_{\mathcal{P}})_{alg}=\sigma_{alg}$, where $\sigma_{\mathcal{P}}(\lambda)$ is a smooth type for $K$ as in \cite{MR1728541} and $\sigma_{alg}$ is the restriction to $K$ of an irreducible algebraic representation of $\mathrm{Res}_{F/\Qp}GL_n$.  Fix a $K$-stable $\mathcal{O}$-lattice $\sigma_{\mathcal{P}}^{\circ}$ in $\sigma_{\mathcal{P}}$. Set
\[M_{\infty}(\sigma_{\mathcal{P}}^{\circ}):= \left( \mathrm{Hom}_{\mathcal{O}[[K]]}^{cont}(M_{\infty}, (\sigma_{\mathcal{P}}^{\circ})^{d})\right) ^{d}\]

\noindent where we are considering homomorphisms that are continuous for the profinite topology on $M_{\infty}$ and the $p$-adic topology on $(\sigma_{\mathcal{P}}^{\circ})^{d}$, and where we equip $\mathrm{Hom}_{\mathcal{O}[[K]]}^{cont}(M_{\infty}, (\sigma_{\mathcal{P}}^{\circ})^{d})$ with the $p$-adic topology. Note that $M_{\infty}(\sigma_{\mathcal{P}}^{\circ})$ is an $\mathcal{O}$-torsion free, profinite, linear-topological $\mathcal{O}$-module.

\medskip
Let $R_{\infty}(\sigma_{\mathcal{P}})$ be the quotient of $R_\infty$ which acts faithfully on $M_{\infty}(\sigma_{\mathcal{P}}^{\circ})$, i.e. $R_{\infty}(\sigma_{\mathcal{P}})=M_{\infty}(\sigma_{\mathcal{P}}^{\circ})/\mathrm{ann}(M_{\infty}(\sigma_{\mathcal{P}}^{\circ}))$. Set $R_{\infty}(\sigma_{\mathcal{P}})'=R_{\infty}\otimes_{R_{\tilde{\mathfrak{p}}}^{\square}}R_{\tilde{\mathfrak{p}}}^{\square}(\sigma_{\mathcal{P}})$.

\begin{lemma}\label{4.7}

Let $\mathcal{P}$ be a partition-valued function, then $R_{\infty}(\sigma_{\mathcal{P}})$ is a reduced $\mathcal{O}$-torsion free quotient of $R_{\infty}(\sigma_{\mathcal{P}})'$. Moreover the module $M_{\infty}(\sigma_{\mathcal{P}}^{\circ})$ is Cohen-Macaulay.

\end{lemma}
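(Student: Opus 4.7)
The plan is to follow the strategy of \cite[Lemmas~4.17--4.18]{MR3529394}, which establishes the analogous facts in the crystalline setting (the case $\mathcal{P}=\mathcal{P}_{\max}$). The new ingredient is the stratification parameter $\mathcal{P}$, and the geometry of $\Sp R^{\square}_{\tilde{\mathfrak{p}}}(\sigma_{\mathcal{P}})$ developed in the preceding section is designed precisely so that the monodromy condition on the Galois side matches the $K$-type condition defined by $\sigma_{\mathcal{P}}(\lambda)$ on the automorphic side. The proof naturally splits into three tasks: factoring the action through $R_{\infty}(\sigma_{\mathcal{P}})'$, establishing maximal Cohen--Macaulayness, and deducing reducedness and $\mathcal{O}$-torsion freeness.

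For the factorization, I would show that the kernel of the surjection $R_{\infty}\twoheadrightarrow R_{\infty}(\sigma_{\mathcal{P}})'$ annihilates $M_{\infty}(\sigma_{\mathcal{P}}^{\circ})$. Since the module is $\mathcal{O}$-torsion free, it embeds into $M_{\infty}(\sigma_{\mathcal{P}}^{\circ})[1/p]$, so it suffices to check the claim on the Jacobson spectrum of its support. At any closed point $x$ of this support, the local--global compatibility inherent in the construction of $M_{\infty}$ identifies $r_x$ with a potentially semi-stable Galois representation of Hodge--Tate weight $\mathbf{v}$ and inertial type $\tau$ whose smooth Langlands parameter admits a nonzero $K$-equivariant map from $\sigma_{\mathcal{P}}^{\circ}$. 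The crucial input is then \cite[Theorem~3.7]{MR3769675}, alluded to in the text: the nonvanishing of $\mathrm{Hom}_K(\sigma_{\mathcal{P}}^{\circ},\pi_{sm}(r_x))$ forces the monodromy partition $\mathcal{P}_x$ to dominate $\mathcal{P}$. By the very definition of $R^{\square}_{\tilde{\mathfrak{p}}}(\sigma_{\mathcal{P}})$ this means $x\in\Sp R^{\square}_{\tilde{\mathfrak{p}}}(\sigma_{\mathcal{P}})[1/p]$, so the support is contained in $\Sp R_{\infty}(\sigma_{\mathcal{P}})'[1/p]$; combined with the reducedness of $R^{\square}_{\tilde{\mathfrak{p}}}(\sigma_{\mathcal{P}})$ this yields the desired factorization.

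For the Cohen--Macaulay claim, I would invoke the standard Taylor--Wiles--Kisin commutative algebra machine as in \cite[Lemma~4.17]{MR3529394}: $M_{\infty}$ is (up to the patching power series variables) finite and projective over an auxiliary formal power series ring $S_{\infty}$, and the functor $(-)\mapsto M_{\infty}(-)$ applied to the finite free $\mathcal{O}[[K]]$-module $\sigma_{\mathcal{P}}^{\circ}$ preserves finite projectivity over the quotient $S_{\infty}(\sigma_{alg})$ cutting out the fixed algebraic weight. A numerical comparison, using the dimension equality $\dim R_{\tilde{\mathfrak{p}}}^{\square}(\sigma_{min})=\dim R_{\tilde{\mathfrak{p}}}^{\square}(\sigma_{\mathcal{P}})$ from the preceding lemma together with the Taylor--Wiles patching formula for $\dim R_{\infty}$, matches the depth of $M_{\infty}(\sigma_{\mathcal{P}}^{\circ})$ over $S_{\infty}(\sigma_{alg})$ with the Krull dimension of $R_{\infty}(\sigma_{\mathcal{P}})$, yielding maximal Cohen--Macaulayness.

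The remaining algebraic properties of $R_{\infty}(\sigma_{\mathcal{P}})$ follow formally. Torsion-freeness is automatic because $R_{\infty}(\sigma_{\mathcal{P}})$ acts faithfully on the $\mathcal{O}$-torsion free module $M_{\infty}(\sigma_{\mathcal{P}}^{\circ})$. For reducedness, the generic fiber $R_{\infty}(\sigma_{\mathcal{P}})[1/p]$ is a quotient of the reduced ring $R_{\infty}(\sigma_{\mathcal{P}})'[1/p]$, and the maximal Cohen--Macaulay property of $M_{\infty}(\sigma_{\mathcal{P}}^{\circ})$ prohibits embedded primes of its annihilator, so generic reducedness plus $\mathcal{O}$-torsion freeness upgrades to full reducedness. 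The main obstacle I anticipate is the monodromy-versus-type matching at automorphic points in the first step: translating the vanishing of a $\mathrm{Hom}_K$ group into the numerical inequality defining $I_{\mathcal{P}}$ is delicate in the genuinely semi-stable (non-crystalline) regime, and rests entirely on the preparatory work on $\sigma_{\mathcal{P}}(\lambda)$ in \cite{MR3769675} and \cite{Pyv2}. Once this translation is in place, the rest of the proof is standard patching commutative algebra.
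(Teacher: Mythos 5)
Your proposal has the right overall architecture and gets several of the constituent arguments right, but there are two places where you diverge from the paper, one harmlessly and one where there is a genuine gap.

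On the Cohen--Macaulay claim, you redo the Taylor--Wiles--Kisin commutative algebra over $S_\infty$ from scratch. The paper takes a much shorter route: $M_{\infty}(\lambda^{\circ})$ is already known to be Cohen--Macaulay by Lemma 4.30 of \cite{MR3529394}, and since $\mathrm{Ind}_J^K\lambda$ decomposes as a direct sum of the $\sigma_{\mathcal{P}}$'s, the module $M_{\infty}(\sigma_{\mathcal{P}}^{\circ})$ is a direct summand of $M_{\infty}(\lambda^{\circ})$ and hence inherits the Cohen--Macaulay property. Your argument would ultimately work (it is essentially how the cited Lemma 4.30 is proved), but it re-derives rather than reuses, and it obscures the fact that all the $M_{\infty}(\sigma_{\mathcal{P}}^{\circ})$ sit inside one patched module whose structure is already controlled.

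The real problem is in your first step, the factorization through $R_{\infty}(\sigma_{\mathcal{P}})'$. You claim that at \emph{any} closed point $x$ of the support of $M_{\infty}(\sigma_{\mathcal{P}}^{\circ})[1/p]$, local--global compatibility identifies $r_x$ with a Galois representation whose smooth Langlands parameter $\pi_{sm}(r_x)$ receives a nonzero $K$-map from $\sigma_{\mathcal{P}}^{\circ}$. This is not available at a general point of the support. At such a point the only thing you know from the patching construction (via Proposition 2.22 of \cite{MR3306557}) is that $\mathrm{Hom}_K(\sigma_{\mathcal{P}}(\lambda),\pi_x)\neq 0$, where $\pi_x$ is the a priori unknown smooth factor of $V(r_x)^{l.alg}$; the identification $\pi_x\simeq\pi_{sm}(r_x)$ is precisely the content of Theorem \ref{4.31}, which is proved much later and which in fact depends on this very lemma. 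The local--global compatibility statement (Theorem 1.1 of \cite{MR3272276}) applies only to Galois representations arising from automorphic forms, i.e.\ to points lying in $V(\mathfrak{a})$; as written in the paper, the argument observes that at such automorphic points the monodromy partition dominates $\mathcal{P}$, and then propagates this containment of supports to all of $\Sp R_\infty(\sigma_{\mathcal{P}})$ using the Cohen--Macaulay structure and the dimension equality $\dim R_{\tilde{\mathfrak{p}}}^{\square}(\sigma_{min})=\dim R_{\tilde{\mathfrak{p}}}^{\square}(\sigma_{\mathcal{P}})$. This is why the order of the claims matters: Cohen--Macaulayness must be established before (or at least independently of) the factorization, not after. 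Also note that your detour through Theorem 3.7 of \cite{MR3769675} does not save you the input of \cite{MR3272276}: even at an automorphic point you still need \cite{MR3272276} to match the monodromy of $D_{st}(r_y)$ on the Galois side with the monodromy read off from the automorphic representation, so the typical-representations argument is a roundabout way of reaching the same inequality $\mathcal{P}_y\geq\mathcal{P}$.

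The torsion-freeness and reducedness steps are correct and agree with the paper's proof.
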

\begin{proof} That $R_{\infty}(\sigma_{\mathcal{P}})$ is $\mathcal{O}$-torsion free follows immediately from the fact that by definition it acts faithfully on the $\mathcal{O}$-torsion free module $M_{\infty}(\sigma_{\mathcal{P}}^{\circ})$.

The fact that it is actually a quotient of $R_{\infty}(\sigma_{\mathcal{P}})'$ is a consequence of classical local-global compatibility at $\tilde{\mathfrak{p}}$. The proof of this is identical to the proof of Lemma 4.17(1) in \cite{MR3529394}. Even though that proof is written for $\sigma$ (i.e. $\sigma_{max}$ with the notation of this paper), all the details remain unchanged if we replace $\sigma$ by $\sigma_{\mathcal{P}}$, if we observe that by local-global compatibility (Theorem 1.1 of \cite{MR3272276}) the restriction to the local factor at $\tilde{\mathfrak{p}}$ of the global Galois representation coming from a closed point of a Hecke algebra,  is potentially semi-stable such that the partition-valued function associated to monodromy (as in Lemma \ref{4.6}) of this local Galois representation bigger then $\mathcal{P}$.

To prove the remaining assertions first notice that the module $M_{\infty}(\lambda^{\circ})$ is a Cohen-Macaulay module,  by Lemma 4.30 \cite{MR3529394}. Then $M_{\infty}(\sigma_{\mathcal{P}}^{\circ})$ is also a Cohen-Macaulay module because it is a direct summand of $M_{\infty}(\lambda^{\circ})$. Finally, to see that $R_{\infty}(\sigma_{\mathcal{P}})$ is reduced, notice that since $R_{\infty}(\sigma_{\mathcal{P}})'$ is reduced, any non-reduced quotient of the same dimension will have an associated prime, which is not minimal. Since $M_{\infty}(\sigma_{\mathcal{P}}^{\circ})$ is a faithful Cohen-Macaulay module over $R_{\infty}(\sigma_{\mathcal{P}})$, this cannot happen, and so $R_{\infty}(\sigma_{\mathcal{P}})$ is reduced.
\end{proof}

Let $\mathcal{H}(\sigma_{min}^{\circ}) := \mathrm{End}_{G}(\mathrm{c\text{--} Ind}_{K}^{G} \sigma_{min}^{\circ})$. Note that since $\sigma_{\mathcal{P}}^{\circ}$ is a free $\mathcal{O}$-module of finite rank, it follows from the proof of Theorem 1.2 of \cite{MR1900706} that Schikhof duality induces an isomorphism
\[\mathrm{Hom}_{\mathcal{O}[[K]]}^{cont}(M_{\infty}, (\sigma_{\mathcal{P}}^{\circ})^{d}) \simeq \mathrm{Hom}_{K}(\sigma_{\mathcal{P}}^{\circ},(M_{\infty})^{d})\]

\noindent and Frobenius reciprocity gives 
$$\mathrm{Hom}_{K}(\sigma_{\mathcal{P}}^{\circ},(M_{\infty})^{d})= \mathrm{Hom}_{G}(\mathrm{c\text{--} Ind}_{K}^{G} \sigma_{\mathcal{P}}^{\circ},(M_{\infty})^{d}).$$

\noindent Thus $M_{\infty}(\sigma_{\mathcal{P}}^{\circ})$ is equipped with an action of $\mathfrak{Z}_{\Omega}$ which commutes with the action of $R_{\infty}$.

When $\sigma_{\mathcal{P}}= \sigma_{min}$, the module $M_{\infty}(\sigma_{min}^{\circ})$ is equipped with an action of  $\mathcal{H}(\sigma_{min}^{\circ})$. Such an action of $\mathcal{H}(\sigma_{min}^{\circ})$ commutes with the action of $R_{\infty}$. The isomorphism $\mathcal{H}(\sigma_{\min}) \simeq \mathfrak{Z}_{\Omega}$(Corollary 7.2 \cite{Pyv1}) and the isomorphism of Lemma 1.4 \cite{MR2290601}, $\mathcal{H}(\sigma_{min}(\lambda)) \to \mathcal{H}(\sigma_{min})$, allow us to define the action of $\mathcal{H}(\sigma_{min}^{\circ})$ on $M_{\infty}(\sigma_{\mathcal{P}}^{\circ})$.

\begin{lemma}\label{4.8}
If $z \in \mathcal{H}(\sigma_{min}^{\circ})$ is such that $\beta(z) \in R_{\tilde{\mathfrak{p}}}^{\square}(\sigma_{\mathcal{P}})$, then the action of $z$ on $M_{\infty}(\sigma_{\mathcal{P}}^{\circ})$ agrees with the action of $\beta(z)$ via the natural map $R_{\tilde{\mathfrak{p}}}^{\square}(\sigma_{\mathcal{P}}) \longrightarrow R_{\infty}(\sigma_{\mathcal{P}})'$
\end{lemma}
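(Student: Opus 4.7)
The plan is to reduce the claim to a fibre-by-fibre check at closed points of $\Sp R_{\infty}(\sigma_{\mathcal{P}})[1/p]$ and then invoke the defining property of $\beta$ from Theorem \ref{4.4}. Concretely, write $\phi_z$ and $\psi_z$ for the two endomorphisms of $M_{\infty}(\sigma_{\mathcal{P}}^{\circ})$ given respectively by $z \in \mathcal{H}(\sigma_{min}^{\circ})$ and by $\beta(z) \in R_{\tilde{\mathfrak{p}}}^{\square}(\sigma_{\mathcal{P}})$ acting through the composition
\[
R_{\tilde{\mathfrak{p}}}^{\square}(\sigma_{\mathcal{P}}) \longrightarrow R_{\infty}(\sigma_{\mathcal{P}})' \longrightarrow R_{\infty}(\sigma_{\mathcal{P}}).
\]
Both commute with the $R_{\infty}$-action, so $\phi_z-\psi_z$ is an $R_{\infty}$-linear endomorphism of $M_{\infty}(\sigma_{\mathcal{P}}^{\circ})$, and it is enough to show it is zero.

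First I would use that $M_{\infty}(\sigma_{\mathcal{P}}^{\circ})$ is $\mathcal{O}$-torsion free (Lemma \ref{4.7}) to replace everything by its generic fibre: it suffices to show $\phi_z=\psi_z$ as endomorphisms of $M_{\infty}(\sigma_{\mathcal{P}}^{\circ})[1/p]$. Next, since $M_{\infty}(\sigma_{\mathcal{P}}^{\circ})$ is a faithful maximal Cohen--Macaulay module over the reduced ring $R_{\infty}(\sigma_{\mathcal{P}})$ (Lemma \ref{4.7}), the ring $R_{\infty}(\sigma_{\mathcal{P}})[1/p]$ is a reduced Jacobson ring, hence an endomorphism of $M_{\infty}(\sigma_{\mathcal{P}}^{\circ})[1/p]$ is zero as soon as it vanishes on every fibre $M_{\infty}(\sigma_{\mathcal{P}}^{\circ}) \otimes_{R_{\infty},x} E_x$ at closed points $x \in \Sp R_{\infty}(\sigma_{\mathcal{P}})[1/p]$.

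The heart of the argument is then the identification of the fibre at such an $x$. Using the Schikhof duality and Frobenius reciprocity isomorphisms recalled in Section \ref{LA.2}, one has
\[
\mathrm{Hom}_{R_{\infty}}\!\bigl(M_{\infty}(\sigma_{\mathcal{P}}^{\circ})\otimes_{R_{\infty},x}\mathcal{O}\,,\,E\bigr)
\;\simeq\; \mathrm{Hom}_{G}\!\bigl(\mathrm{c\text{--} Ind}_{K}^{G}\sigma_{\mathcal{P}},\,V(r_x)\bigr),
\]
where $V(r_x)$ is the Banach space representation of \eqref{V} associated to the Galois representation $r_x$ at $\tilde{\mathfrak{p}}$. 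By classical local--global compatibility at $\tilde{\mathfrak{p}}$ (Theorem 1.1 of \cite{MR3272276}, applied exactly as in the proof of Lemma 4.17 of \cite{MR3529394} but for the potentially semi-stable deformation ring $R_{\tilde{\mathfrak{p}}}^{\square}(\sigma_{\mathcal{P}})$), this Hom space is built out of the smooth representation $\pi_{sm}(r_x)$: the $\mathfrak{Z}_{\Omega}$-action factors through the character $\chi_{\pi_{sm}(r_x)}$. Therefore the induced action of $z$ on the fibre is multiplication by the scalar $\chi_{\pi_{sm}(r_x)}(z)$, while the induced action of $\psi_z$ is multiplication by $x(\beta(z))$. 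By the very interpolation property of $\beta$ established in Theorem \ref{4.4}, these two scalars coincide, and we are done.

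The main obstacle I expect is the fibre-wise identification in the last paragraph: one must be careful that the version of local--global compatibility one cites really applies to the potentially semi-stable (not merely potentially crystalline) local situation, so that the partition-valued function $\mathcal{P}_x$ attached to the monodromy of $r_x$ at a closed point of $\Sp R_{\infty}(\sigma_{\mathcal{P}})[1/p]$ satisfies $\mathcal{P}_x \ge \mathcal{P}$, and so that the dualisation $(\sigma_{\mathcal{P}}^{\circ})^{d}$ used in the definition of $M_{\infty}(\sigma_{\mathcal{P}}^{\circ})$ is compatible with the Hecke action on $\mathrm{c\text{--} Ind}_{K}^{G}\sigma_{min}^{\circ}$ under Schikhof duality; once these are traced through, the proof is formally parallel to Lemma 4.17(2) of \cite{MR3529394}, with $\sigma$ there replaced by $\sigma_{\mathcal{P}}$.
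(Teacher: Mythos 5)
Your overall framework — reduce to the generic fibre, reduce to closed points, try to identify the two actions fibre-by-fibre — is natural, and your preliminary reductions (using $\mathcal{O}$-torsion freeness and the fact that $M_{\infty}(\sigma_{\mathcal{P}}^{\circ})[1/p]$ is a faithful Cohen--Macaulay module over a reduced Jacobson ring, so it has no embedded primes and an $R$-linear endomorphism vanishing on all closed fibres is zero) are fine. But the step you call ``the heart of the argument'' contains a genuine circularity.

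You assert that at \emph{every} closed point $x$ of $\Sp R_{\infty}(\sigma_{\mathcal{P}})[1/p]$, classical local--global compatibility shows that the $\mathfrak{Z}_{\Omega}$-action on $\mathrm{Hom}_{G}(\mathrm{c\text{--} Ind}_{K}^{G}\sigma_{\mathcal{P}}, V(r_x))$ is given by $\chi_{\pi_{sm}(r_x)}$. This is not something local--global compatibility gives you. Caraiani's theorem concerns Galois representations attached to automorphic forms; it identifies the local factor at $\tilde{\mathfrak{p}}$ of a cohomological Hecke eigenform. A general closed point $x$ of the patched ring $R_{\infty}(\sigma_{\mathcal{P}})[1/p]$ does not correspond to an automorphic form at all (only points in $V(\mathfrak{a})$ do, after passing to the quotient $M_\infty/\mathfrak{a}M_\infty \simeq \tilde S_{\xi,\tau}(U^{\mathfrak{p}},\mathcal{O})_{\mathfrak{m}}^d$), and $V(r_x)$ is defined abstractly from the patched module; the fact that its $\mathfrak{Z}_{\Omega}$-structure matches $\chi_{\pi_{sm}(r_x)}$ is essentially the statement of Lemma \ref{4.24} and Theorem \ref{4.31}, which are proved much later \emph{using} Lemma \ref{4.8}. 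So your argument, as written, assumes a strong form of what it is trying to prove. Nor can you rescue it by density: $V(\mathfrak{a})$ is cut out by a regular sequence, hence a proper closed subset of positive codimension in $\Sp R_{\infty}(\sigma_{\mathcal{P}})[1/p]$, so the automorphic points alone are not Zariski dense.

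The paper's proof avoids this by not working on $\Sp R_{\infty}(\sigma_{\mathcal{P}})[1/p]$ at all: it cites the proof of Lemma 4.17(2) of \cite{MR3529394}, which operates at the level of the Taylor--Wiles--Kisin patching construction itself. There $M_{\infty}(\sigma_{\mathcal{P}}^{\circ})$ is exhibited (after the usual pigeonhole/ultrafilter argument) as a limit of modules arising at genuine finite level, where the Galois representations \emph{are} automorphic and local--global compatibility applies; one checks the desired identity of endomorphisms there and passes to the limit using that the relevant $S_{\infty}$-ideals have trivial intersection and $M_{\infty}(\sigma_{\mathcal{P}}^{\circ})$ is $S_{\infty}$-finite. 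To turn your sketch into a correct proof you would need to replace the appeal to local--global compatibility at arbitrary closed points by this finite-level argument (or simply cite the CEGGPS lemma with $\sigma$ replaced by $\sigma_{\mathcal{P}}$ and Lemma \ref{4.7} replacing Lemma 4.17(1), as the paper does).
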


\begin{proof} As before, this is a consequence of classical local-global compatibility at $\tilde{\mathfrak{p}}$. The proof of this is identical to the proof of Lemma 4.17(2) in \cite{MR3529394}, where we replace $\sigma$ by $\sigma_{\mathcal{P}}$ and instead of using Lemma 4.17(1) in \cite{MR3529394} we apply Lemma \ref{4.7}. 
\end{proof}

We will now define the space of algebraic automorphic forms. First recall some notation from \cite{MR3529394}. The globalization constructed in section 2.1 \cite{MR3529394}, provides us with a global imaginary CM field $\tilde{F}$ with maximal totally real subfield $\tilde{F}^{+}$. We refer the reader to this section for the details of these definitions.

Recall some notation from section 2.3 \cite{MR3529394}. Let $\tilde{G} / \tilde{F}^{+}$ a certain definite unitary group as defined in the paper \cite{MR3529394}. Let $U= \prod\limits_{v} U_v$ be a compact open subgroup of $\tilde{G}(\A_{\tilde{F}^{+}}^{\infty})$. Let $S_p$ denote the set of primes of $\tilde{F}^{+}$ dividing $p$. Fix $\mathfrak{p}|p$. Let $\xi$ the weight as in section \ref{I.1} of this article and $\tau$ the inertial type as in section \ref{I.3}. Let $W_{\xi,\tau}$ be the finite free $\mathcal{O}$-module with an action of $\prod\limits_{v \in S_p \setminus \{\mathfrak{p}\}} U_v$.

For any compact open $U$ and any $\mathcal{O}$-module $V$, let $S_{\xi,\tau}(U, V)$ denote the set of continuous functions
$$f: \tilde{G}(\tilde{F}^{+})\setminus \tilde{G}(\A_{\tilde{F}^{+}}^{\infty}) \longrightarrow W_{\xi,\tau} \otimes V$$

\noindent such that for $g \in \tilde{G}(\A_{\tilde{F}^{+}}^{\infty})$ we have $f(gu) = u^{-1}f(g)$ for $u \in U$, where $U$ acts on $W_{\xi,\tau} \otimes V$ via the projection to $\prod\limits_{v \in S_p} U_v$. The space $S_{\xi,\tau}(U, V)$ is called a \textbf{space of algebraic modular forms}.

Now we can define $\varpi$-adically completed cohomology space. For each positive integer $m$, the compact open subgroups $U_m$ as defined in the beginning of \cite[section 2.3]{MR3529394} have the same level away from $\mathfrak{p}$. Let $U^{\mathfrak{p}}$ denote that common level. Define the ̟$\varpi$-adically \textbf{completed cohomology space}:
$$\tilde{S}_{\xi,\tau}(U^{\mathfrak{p}},\mathcal{O})_{\mathfrak{m}}:=\varprojlim_{s}(\varinjlim_{m} S_{\xi,\tau}(U_m, \mathcal{O}/\varpi^{s})_{\mathfrak{m}}).$$

The space is equipped with a natural $G$-action, induced from the action of $G$ on algebraic automorphic forms.

The module $M_{\infty}$ comes with an action of $S_{\infty}$ (cf.  \cite[page 27, Section 2.8]{MR3529394}). Recall that by  \cite[Corollary 2.11]{MR3529394}, we have a $G$-equivariant isomorphism $M_{\infty}/\mathfrak{a}M_{\infty} \simeq \tilde{S}_{\xi,\tau}(U^{\mathfrak{p}},\mathcal{O})_{\mathfrak{m}}^{d}$, where $\mathfrak{a}$ is the augmentation ideal in $S_{\infty}$ generated by some formal variables (cf. \cite[page 27, Section 2.8]{MR3529394}). Moreover that isomorphism commutes with the $R_{\tilde{\mathfrak{p}}}^{\square}$-action on both sides.

\begin{lemma}\label{4.11}
Let $pr: \Sp R_{\infty}(\sigma_{min})' [1/p] \longrightarrow \Sp R_{\tilde{\mathfrak{p}}}^{\square}(\sigma_{min})[1/p]$ be the map induced by $j:R_{\tilde{\mathfrak{p}}}^{\square}(\sigma_{min}) \longrightarrow R_{\infty}\otimes_{R_{\tilde{\mathfrak{p}}}^{\square}}R_{\tilde{\mathfrak{p}}}^{\square}(\sigma_{min})$, $a\mapsto 1 \otimes a$. Let $x \in \Sp R_{\tilde{\mathfrak{p}}}^{\square}(\sigma_{min})[1/p]$ be a closed smooth point. Then any $y \in pr^{-1}(x) \subset  R_{\infty}(\sigma_{min})'[1/p]$ is a smooth point of $\Sp R_{\infty}(\sigma_{min})$.
\end{lemma}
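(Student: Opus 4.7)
The plan is to exploit the formal smoothness of $R_\infty$ over $R_{\tilde{\mathfrak{p}}}^{\square}$, which is an intrinsic feature of the Taylor--Wiles--Kisin patching construction used in \cite{MR3529394}. In section 2.8 of loc.\ cit., $R_\infty$ is built as a formal power series ring over a completed tensor product of local lifting rings at a finite set of primes, and, apart from $R_{\tilde{\mathfrak{p}}}^{\square}$ itself, the Taylor--Wiles prime choice forces every other local factor to be formally smooth over $\mathcal{O}$. Thus one obtains an isomorphism $R_\infty \simeq R_{\tilde{\mathfrak{p}}}^{\square}[[z_1, \ldots, z_N]]$ of topological $R_{\tilde{\mathfrak{p}}}^{\square}$-algebras, for some integer $N\geq 0$.

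Base-changing along the surjection $R_{\tilde{\mathfrak{p}}}^{\square} \twoheadrightarrow R_{\tilde{\mathfrak{p}}}^{\square}(\sigma_{min})$ gives
$$R_\infty(\sigma_{min})' \;=\; R_\infty \otimes_{R_{\tilde{\mathfrak{p}}}^{\square}} R_{\tilde{\mathfrak{p}}}^{\square}(\sigma_{min}) \;\simeq\; R_{\tilde{\mathfrak{p}}}^{\square}(\sigma_{min})[[z_1, \ldots, z_N]],$$
which is formally smooth over $R_{\tilde{\mathfrak{p}}}^{\square}(\sigma_{min})$. After inverting $p$, the morphism $pr$ therefore becomes a smooth morphism of Jacobson Noetherian $E$-schemes, locally on the source a product of the base with an affine space. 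Since regularity of a point is preserved under formally smooth extensions of local rings (dimension and embedding dimension being additive along a smooth morphism, with regular fibre), the assumption that $x$ is a smooth (equivalently, regular) point of the base forces every $y \in pr^{-1}(x)$ to be a regular point of $\Sp R_\infty(\sigma_{min})'[1/p]$.

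To transfer this regularity from $R_\infty(\sigma_{min})'$ to its quotient $R_\infty(\sigma_{min})$, I would invoke Proposition \ref{p2}(2): $\Sp R_\infty(\sigma_{min})[1/p]$ is a union of irreducible components of $\Sp R_\infty(\sigma_{min})'[1/p]$. A regular point of the latter lies on exactly one irreducible component, so whenever $y$ lies on an automorphic component the surjection $R_\infty(\sigma_{min})' \twoheadrightarrow R_\infty(\sigma_{min})$ induces an isomorphism of local rings at $y$; smoothness at $y$ for the larger ring then transfers to smoothness at $y$ for $R_\infty(\sigma_{min})$.

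The main obstacle, and the place where Proposition \ref{p2}(2) is genuinely needed, is precisely this last transfer: a priori $y$ could lie on a non-automorphic component of $\Sp R_\infty(\sigma_{min})'[1/p]$, in which case $y$ is not a point of $\Sp R_\infty(\sigma_{min})[1/p]$ at all and the statement is vacuous. The irreducibility and dimension information supplied by Proposition \ref{p2} is the essential ingredient that links the two spaces locally at regular points, and it is this step — rather than the formal smoothness in step one, which is nearly tautological — that carries the substantive content of the lemma.
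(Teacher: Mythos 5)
Your proposal is correct and unpacks exactly what the paper's citation of ``the first part of the proof of Theorem 4.35'' in \cite{MR3529394} is meant to convey: the choice of globalization in the patching construction makes $R_\infty$ formally smooth over $R_{\tilde{\mathfrak{p}}}^{\square}$, so after base change and inverting $p$ the morphism $pr$ has geometrically regular fibres and transfers regularity to any $y\in pr^{-1}(x)$ in $\Sp R_\infty(\sigma_{min})'[1/p]$, while Proposition \ref{p2}(2) then identifies the local rings of $\Sp R_\infty(\sigma_{min})[1/p]$ and $\Sp R_\infty(\sigma_{min})'[1/p]$ at such a regular point whenever $y$ lies on an automorphic component. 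The only caveat worth flagging is the forward reference: Proposition \ref{p2}(2) appears later as Proposition \ref{4.9}(2) and is proved by citing Lemma 4.18 of \cite{MR3529394}, but since that argument rests only on the Cohen--Macaulay and dimension facts of Lemma \ref{4.7} and not on the present lemma, no circularity is introduced.
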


\begin{proof} This is essentially the first part of the proof of  \cite[Theorem 4.35]{MR3529394}.
\end{proof}

\begin{prop}\label{4.26}
If $y \in \Sp R_{\infty}(\sigma_{min})'[1/p] \cap V(\mathfrak{a})$ is  a closed point, then $y$ is a smooth point of $\Sp R_{\infty}(\sigma_{min})$ and $V(r_{y})^{l.alg} \simeq \pi_{sm}(r_y) \otimes \pi_{alg}(r_y)$.
\end{prop}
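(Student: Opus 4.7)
The plan is to reduce the statement to classical local--global compatibility on the definite unitary group $\tilde{G}$ by exploiting the $G$-equivariant isomorphism $M_{\infty}/\mathfrak{a}M_{\infty} \simeq \tilde{S}_{\xi,\tau}(U^{\mathfrak{p}},\mathcal{O})_{\mathfrak{m}}^{d}$ recalled from \cite{MR3529394}: points $y \in V(\mathfrak{a})$ correspond to bona fide classical algebraic automorphic Hecke eigensystems, so both smoothness and the locally algebraic vectors at such $y$ can be read off directly from classical theory.

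For the smoothness claim, I would first note that since $\mathfrak{a}$ kills $y$, the character $y : R_{\infty} \to E$ factors through $R_{\infty}/\mathfrak{a}$, and the cited isomorphism identifies $y$ with a Hecke eigensystem appearing in classical completed cohomology, coming from some cuspidal automorphic representation $\Pi$ on $\tilde{G}$ whose local component $\Pi_{\tilde{\mathfrak{p}}}$ has inertial type $\tau$ and Hodge--Tate weights $\mathbf{v}$. Since $\sigma_{min}$ is a typical $K$-representation for this Bernstein component, $\Pi_{\tilde{\mathfrak{p}}}$ contains $\sigma_{min}$ upon restriction to $K$, and consequently $y$ lies in the support of $M_{\infty}(\sigma_{min}^{\circ})$, i.e.\ $y \in \Sp R_{\infty}(\sigma_{min})[1/p]$. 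By classical local--global compatibility (Theorem 1.1 of \cite{MR3272276}, already invoked in Lemma \ref{4.7}) we have $\pi_{sm}(r_y) \simeq \Pi_{\tilde{\mathfrak{p}}}$, and this is generic as the local component of a cuspidal automorphic representation of a twisted $GL_n$. Theorem 1.2.7 of \cite{MR3546966} then forces $x := pr(y)$ to be a smooth point of $\Sp R_{\tilde{\mathfrak{p}}}^{\square}(\sigma_{min})[1/p]$, and Lemma \ref{4.11} upgrades this to smoothness of $y$ on $\Sp R_{\infty}(\sigma_{min})$.

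For the locally algebraic vectors, the plan is to apply $\mathrm{Hom}_{\mathcal{O}}^{cont}(-,E)$ to the isomorphism
\[
M_{\infty} \otimes_{R_{\infty}, y} \mathcal{O} \simeq (M_{\infty}/\mathfrak{a}M_{\infty}) \otimes_{R_{\infty}/\mathfrak{a}, y} \mathcal{O}
\]
in order to identify $V(r_y)$ with the $y$-isotypic component of $\tilde{S}_{\xi,\tau}(U^{\mathfrak{p}}, E)_{\mathfrak{m}}$. The locally algebraic vectors of this completed cohomology space, after untwisting by $W_{\xi,\tau}$, recover the classical space of algebraic automorphic forms of weight $\xi$ and level $U^{\mathfrak{p}}$, whose $y$-isotypic piece decomposes as $(\pi_{sm}(r_y) \otimes \pi_{alg}(r_y))^{\oplus m}$ for some multiplicity $m \geq 1$ by the classical theory of automorphic forms on $\tilde{G}$.

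The main obstacle I anticipate is pinning down $m = 1$. A priori several cuspidal automorphic representations could share the Hecke eigensystem of $y$ at the unramified places away from the Taylor--Wiles primes, giving a direct sum of several copies of $\pi_{sm}(r_y) \otimes \pi_{alg}(r_y)$ rather than a single copy. This is controlled by the rank-one local freeness of $M_{\infty}(\sigma_{min}^{\circ})[1/p]$ over the regular locus of $\Sp R_{\infty}(\sigma_{min})[1/p]$ provided by Proposition \ref{p2}(1): once we know $y$ is a smooth point, the fibre of $M_{\infty}(\sigma_{min}^{\circ})[1/p]$ at $y$ is one-dimensional over $E$, which in turn forces $m = 1$. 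The argument follows the same pattern as the crystalline analogue in \cite{MR3529394}, transported to the potentially semi-stable setting using the tools developed in Sections \ref{LA.2}--\ref{LA.4}.
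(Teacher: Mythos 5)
Your overall route — factor through $R_\infty/\mathfrak a$, invoke the $G$-equivariant isomorphism $M_\infty/\mathfrak aM_\infty\simeq\tilde S_{\xi,\tau}(U^{\mathfrak p},\mathcal O)_{\mathfrak m}^d$, identify $V(r_y)^{l.alg}$ with the $\mathfrak m_y$-eigenspace of classical algebraic automorphic forms, and then use classical local--global compatibility — is exactly the strategy of the paper's proof. The part that closes the loop in the paper (the exact-sequence manipulation identifying $V(r_y)$ with $\mathrm{Hom}_{\mathcal O}^{cont}(\tilde S_{\xi,\tau}(U^{\mathfrak p},\mathcal O)_{\mathfrak m}^d,E)[\mathfrak m_y]$, and Emerton's Proposition 3.2.4 to pass to algebraic automorphic forms) is as you sketch.

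However, there is a genuine circularity in how you pin down the multiplicity $m=1$. You appeal to Proposition \ref{p2}(1) — the statement that $M_\infty(\sigma_{min}^\circ)[1/p]$ is locally free of rank one on the regular locus — to conclude that the fibre at $y$ is one-dimensional. But that proposition is precisely Proposition \ref{4.9} in the body of the paper, whose proof \emph{relies on Proposition \ref{4.26}}: the rank-one claim there is established by first taking a smooth point $y\in V(\mathfrak a)$ (smoothness guaranteed by Proposition \ref{4.26}), then computing $\dim_E M_\infty(\sigma_{min}^\circ)\otimes_{R_\infty}\kappa(y)=\dim_E\mathrm{Hom}_K(\sigma_{min},V(r_y)^{l.alg})$ and using $V(r_y)^{l.alg}\simeq\pi_{sm}(r_y)\otimes\pi_{alg}(r_y)$ — again from Proposition \ref{4.26}. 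So you cannot use the rank-one freeness to prove the multiplicity-one statement; it goes the other way around. The paper instead gets $m=1$ from automorphic multiplicity one: because $\tilde G$ is compact at infinity, condition $(\ast)$ of Theorem 5.4 of \cite{MR2856380} holds automatically, and Theorems 5.4 and 5.9 of \textit{op.~cit.} together with the choice of $U^{\mathfrak p}$, the fixed mod-$p$ Hecke action at $\tilde v_1$, and the irreducibility of the globalization of $\overline r$, pin down the multiplicity. Genericity of $\pi_{sm}(r_y)$ is likewise obtained through the base change of Theorem 5.9 \cite{MR2856380} combined with Shalika's genericity of local components of cuspidal automorphic representations of $GL_n$ \cite{MR0348047} (not of $\tilde G$ directly — $\tilde G$ is a unitary group, so one cannot invoke Shalika straight off as your sketch suggests). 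Only after genericity is in hand does one apply Theorem 1.2.7 of \cite{MR3546966} and Lemma \ref{4.11} to deduce smoothness of $y$, which in the paper comes \emph{after} the computation of $V(r_y)^{l.alg}$, not before.

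One smaller imprecision: you write that since $\sigma_{min}$ is \emph{typical}, $\Pi_{\tilde{\mathfrak p}}$ contains $\sigma_{min}$ on restriction to $K$. Typicality runs in the other direction (containing $\sigma_{min}$ forces membership in $\Omega$); the forward containment requires genericity of $\Pi_{\tilde{\mathfrak p}}$ together with Theorem 3.1 of \cite{Pyv2}, so the genericity discussion cannot be deferred.
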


\begin{proof} We follow here quite closely the proof of Theorem 4.35 \cite{MR3529394}. By definition
$$V(r_{y}):=\mathrm{Hom}_{\mathcal{O}}^{cont}(M_{\infty} \otimes_{R_{\infty},y}\mathcal{O},E)$$

\noindent Since $\mathfrak{a} \subseteq \mathrm{Ker}(y)=\mathfrak{m}_y$, we have that:
$$\mathrm{Hom}_{\mathcal{O}}^{cont}(M_{\infty} \otimes_{R_{\infty},y}\mathcal{O},E)=\mathrm{Hom}_{\mathcal{O}}^{cont}(M_{\infty}/\mathfrak{a}M_{\infty} \otimes_{R_{\infty},y}\mathcal{O},E) $$

\noindent Then by Corollary 2.11\cite{MR3529394}, we have
$$\mathrm{Hom}_{\mathcal{O}}^{cont}(M_{\infty}/\mathfrak{a}M_{\infty} \otimes_{R_{\infty},y}\mathcal{O},E) \simeq \mathrm{Hom}_{\mathcal{O}}^{cont}(\tilde{S}_{\xi,\tau}(U^{\mathfrak{p}},\mathcal{O})_{\mathfrak{m}}^{d} \otimes_{R_{\infty},y}\mathcal{O},E)$$

The ideal $\mathfrak{m}_y$ is finitely generated, choose a presentation $\mathfrak{m}_y=(a_1,\ldots,a_k)$, then we get an exact sequence of $R_{\infty}$-modules:
$$R_{\infty}^{\oplus k}\longrightarrow R_{\infty} \longrightarrow \mathcal{O} \longrightarrow 0$$

\noindent Let $\Pi(\bullet) = \mathrm{Hom}_{\mathcal{O}}^{cont}(\bullet \otimes_{R_{\infty}} \tilde{S}_{\xi,\tau}(U^{\mathfrak{p}},\mathcal{O})_{\mathfrak{m}}^{d},E)$. Then the functor $\Pi$ is left exact and contravariant, by Lemma 2.20 of \cite{MR3306557}. Apply this functor to the exact sequence above to get the following exact sequence:
$$ 0 \longrightarrow V(r_y) \longrightarrow  \mathrm{Hom}_{\mathcal{O}}^{cont}( \tilde{S}_{\xi,\tau}(U^{\mathfrak{p}},\mathcal{O})_{\mathfrak{m}}^{d},E) \xrightarrow{f} \mathrm{Hom}_{\mathcal{O}}^{cont}(\tilde{S}_{\xi,\tau}(U^{\mathfrak{p}},\mathcal{O})_{\mathfrak{m}}^{d},E)^{\oplus k}$$

\noindent were $f(l) = (l.a_1,\ldots,l.a_k)$. By the exactness we identify 
$$V(r_y) \simeq \mathrm{Hom}_{\mathcal{O}}^{cont}( \tilde{S}_{\xi,\tau}(U^{\mathfrak{p}},\mathcal{O})_{\mathfrak{m}}^{d},E)[\mathfrak{m}_y],$$ 

\noindent but  
$$\tilde{S}_{\xi,\tau}(U^{\mathfrak{p}},\mathcal{O})_{\mathfrak{m}} \otimes_{\mathcal O} E  \simeq \mathrm{Hom}_{\mathcal{O}}^{cont}( \tilde{S}_{\xi,\tau}(U^{\mathfrak{p}},\mathcal{O})_{\mathfrak{m}}^{d},\mathcal{O})\otimes_{\mathcal O} E$$ 
$$ \simeq \mathrm{Hom}_{\mathcal{O}}^{cont}( \tilde{S}_{\xi,\tau}(U^{\mathfrak{p}},\mathcal{O})_{\mathfrak{m}}^{d},E)$$

\noindent Thus
\[V(r_{y})^{l.alg} \simeq (\tilde{S}_{\xi,\tau}(U^{\mathfrak{p}},\mathcal{O})\otimes_{\mathcal{O}}E)^{l.alg}[\mathfrak{m}_y]\]

Proposition 3.2.4 of \cite{MR2207783} shows that locally algebraic vectors of any given weight are precisely the algebraic automorphic forms of that weight. Hence:
\[ (\tilde{S}_{\xi,\tau}(U^{\mathfrak{p}},\mathcal{O})\otimes_{\mathcal{O}}E)^{l.alg}[\mathfrak{m}_y] \simeq \pi_{sm}(r_y) \otimes \pi_{alg}(r_y) \]

This isomorphism follows from classical local-global compatibility (Theorem 1.1 of \cite{MR3272276}). A priori, $\pi_{sm}(r_y) \otimes \pi_{alg}(r_y)$ may appear with some multiplicity. However this multiplicity is seen to be one. Indeed the group $\tilde{G}$ is compact at infinity, so the condition ($\ast$) from Theorem 5.4 of \cite{MR2856380} is automatically satisfied. We may then apply Theorems 5.4 and 5.9 of \cite{MR2856380}, where $\sigma$, in those Theorems, is our $(\tilde{S}_{\xi,\tau}(U^{\mathfrak{p}},\mathcal{O})\otimes_{\mathcal{O}}E)^{l.alg}$ and $\pi$ is an automorphic cuspidal representation of $GL_n$, which is a base change of $\sigma$. Then  by the choice of $U^{\mathfrak{p}}$ (section 2.3 \cite{MR3529394} for definition of the $U_m$), the fact that we have fixed the action mod $p$ of the Hecke operators at $\tilde{v}_1$(section 2.3 \cite{MR3529394}), and the irreducibility of the globalization of $\overline{r}$, we see that the multiplicity of $\pi_{sm}(r_y) \otimes \pi_{alg}(r_y)$ is one.

Local factors of $\pi$, as in the paragraph above, are generic according to Corollary of Theorem 5.5 \cite{MR0348047}. Then by Theorem 5.9 \cite{MR2856380}, the local factors of $(\tilde{S}_{\xi,\tau}(U^{\mathfrak{p}},\mathcal{O})\otimes_{\mathcal{O}}E)^{l.alg}$ are also generic, since $\tilde{G}$, by construction, is quasi-split at all the finite places. It follows that $\pi_{sm}(r_y)$ is generic. Moreover, by Theorem 1.2.7\cite{MR3546966}, the closed point $pr(y)$ ($pr: \Sp R_{\infty}(\sigma_{min})' [1/p] \longrightarrow \Sp R_{\tilde{\mathfrak{p}}}^{\square}(\sigma_{min})[1/p]$, with notation of Lemma \ref{4.11}) is smooth if and only if $\pi_{sm}(r_y)$ is generic. Then by Lemma \ref{4.11} the point $y$ is smooth.
\end{proof}

In order to study the support of $M_{\infty}(\sigma_{min}^{\circ})$, we will use the commutative algebra arguments underlying the Taylor-Wiles-Kisin method.

\begin{prop}\label{4.9}
\begin{enumerate}
\item The module $M_{\infty}(\sigma_{min}^{\circ})[1/p]$ is locally free of rank one over the regular locus of $R_{\infty}(\sigma_{min})[1/p]$.
\item $\Sp R_{\infty}(\sigma_{min}) [1/p]$ is a union of irreducible components of 

$\Sp R_{\infty}(\sigma_{min})'[1/p]$.
\end{enumerate}
\end{prop}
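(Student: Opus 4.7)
The plan is to prove (2) first and then bootstrap (1) from it, following the template of Lemma 4.18 of \cite{MR3529394} with $\sigma_{min}$ in place of the crystalline weight. The two main inputs I will rely on are the Cohen--Macaulayness of $M_{\infty}(\sigma_{min}^{\circ})$ over $R_{\infty}(\sigma_{min})$ (Lemma \ref{4.7}) and the identification of locally algebraic vectors at automorphic points (Proposition \ref{4.26}).

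For (2), the strategy is a pure dimension count. By Theorem (3.3.4) of \cite{MR2373358}, $R_{\tilde{\mathfrak{p}}}^{\square}(\sigma_{min})$ is equidimensional of known dimension; this property propagates through the formal-variable adjunctions and completed tensor products used to build $R_{\infty}$ in section 2.8 of \cite{MR3529394}, so $R_{\infty}(\sigma_{min})'$ is equidimensional of some dimension $d$. The patching numerology together with Lemma \ref{4.7} gives $\dim M_{\infty}(\sigma_{min}^{\circ}) = d$, and since $M_{\infty}(\sigma_{min}^{\circ})$ is Cohen--Macaulay and faithful over $R_{\infty}(\sigma_{min})$, the latter is also equidimensional of dimension $d$. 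A surjection of equidimensional rings of equal dimension (Lemma \ref{4.7}) must identify $\Sp R_{\infty}(\sigma_{min})$ with a union of irreducible components of $\Sp R_{\infty}(\sigma_{min})'$, and inverting $p$ preserves this.

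For (1), I would fix a point $y$ in the regular locus of $\Sp R_{\infty}(\sigma_{min})[1/p]$. Since $M_{\infty}(\sigma_{min}^{\circ})[1/p]$ is maximal Cohen--Macaulay over $R_{\infty}(\sigma_{min})[1/p]$ and the local ring at $y$ is regular, the Auslander--Buchsbaum formula forces $M_{\infty}(\sigma_{min}^{\circ})[1/p]_{y}$ to be free. Its rank is locally constant on the regular locus of each irreducible component, so it is enough to exhibit, on every irreducible component, one smooth point at which the rank equals $1$. Candidates are the closed automorphic points $y \in V(\mathfrak{a})$ with $\pi_{sm}(r_{y})$ generic: by Lemma \ref{4.11} such points are smooth, and by Proposition \ref{4.26} the locally algebraic vectors of $V(r_{y})$ are isomorphic to $\pi_{sm}(r_{y}) \otimes \pi_{alg}(r_{y})$ with multiplicity one. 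Unwinding the Shikhof--Frobenius reciprocity definition of $M_{\infty}(\sigma_{min}^{\circ})$ translates this multiplicity-one statement into $\dim_{\kappa(y)}\bigl(M_{\infty}(\sigma_{min}^{\circ}) \otimes_{R_{\infty}(\sigma_{min})} \kappa(y)\bigr) = 1$, giving rank $1$ at $y$.

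The main obstacle, as always in this type of argument, is the density step: one must know that each irreducible component of $\Sp R_{\infty}(\sigma_{min})[1/p]$ contains a closed automorphic smooth point with $\pi_{sm}(r_{y})$ generic, so that Proposition \ref{4.26} can anchor the rank computation on every component. This requires exploiting part (2) (so that every component of $R_{\infty}(\sigma_{min})[1/p]$ is also a component of $R_{\infty}(\sigma_{min})'[1/p]$, hence visible in the global patching picture) together with the density of points in $V(\mathfrak{a})$ coming from the globalization of section 2.1 of \cite{MR3529394} and the smoothness/genericity consequence of Theorem 1.2.7 of \cite{MR3546966}. Once this geometric input is in place, the rest is the standard commutative algebra run through $M_{\infty}$, exactly as in the crystalline case treated in \cite{MR3529394}.
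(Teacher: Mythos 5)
Your argument for part (2) is the same equidimensionality--plus--Cohen--Macaulay argument as in Lemma 4.18 of \cite{MR3529394}, which is what the paper cites, and your use of Auslander--Buchsbaum for the local freeness in part (1) is also the same as the paper's. The structure (prove (2) first, then (1)) is the reverse of the paper's presentation but that is not a substantive difference.

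There is, however, a genuine gap in your rank computation in part (1). You write that Proposition \ref{4.26} gives $V(r_y)^{l.alg} \simeq \pi_{sm}(r_y)\otimes\pi_{alg}(r_y)$ ``with multiplicity one'' and that ``unwinding the Shikhof--Frobenius reciprocity definition'' then yields $\dim_{\kappa(y)}\bigl(M_{\infty}(\sigma_{min}^{\circ})\otimes\kappa(y)\bigr)=1$. But what Shikhof duality, Frobenius reciprocity, and Proposition 2.22 of \cite{MR3306557} actually give is $\dim_E M_{\infty}(\sigma_{min}^{\circ})\otimes_{R_\infty}\kappa(y) = \dim_E\mathrm{Hom}_K(\sigma_{min}, V(r_y)^{l.alg})$, which after factoring out $\sigma_{alg}$ equals $\dim_E\mathrm{Hom}_K(\sigma_{min}(\lambda), \pi_{sm}(r_y))$. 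Concluding that this number is $1$ is \emph{not} a formal consequence of the automorphic multiplicity-one statement in Proposition \ref{4.26}; it is the statement that the $K$-typical representation $\sigma_{min}(\lambda)$ occurs with multiplicity exactly one in any irreducible \emph{generic} representation of $G$. This is a nontrivial representation-theoretic input, proved in Lemma 3.2 of \cite{Pyv2}, and it is the key new ingredient that distinguishes the semi-stable case from the crystalline case in \cite{MR3529394} (where the analogue for $\sigma_{max}(\lambda)$ in \emph{every} irreducible representation in the component is used). Your phrase ``exactly as in the crystalline case'' conceals precisely the step that requires the new typical-representation result. The argument is salvageable but you must cite this multiplicity-one fact for $\sigma_{min}(\lambda)$ explicitly rather than folding it into a reference to Proposition \ref{4.26}.

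A lesser point: for the density of smooth automorphic points on each component, you invoke part (2). The paper instead argues directly that $V\cap V(\mathfrak{a})\neq\emptyset$ on each component $V$ of $\mathrm{Supp}\,M_{\infty}(\sigma_{min}^{\circ})$, because $\mathfrak{a}$ is generated by a regular sequence and $(\varpi,\mathfrak{a})$ extends to a system of parameters for $M_{\infty}(\sigma_{min}^{\circ})$ on $V$ (as later in Proposition \ref{4.33}, using Lemma 3.9 of \cite{MR3544298}). Your route through (2) is workable but should be spelled out; the paper's system-of-parameters argument is more direct and does not rely on (2).
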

\begin{proof} \textbf{(1)}. Let $ {\mathfrak{m}}$ be a smooth point in the support of $M_{\infty}(\sigma_{min}^{\circ})[1/p]$. Since $M_{\infty}(\sigma_{min}^{\circ})[1/p]$ is a Cohen-Macaulay module we have $\mathrm{depth} \: M_{\infty}(\sigma_{min}^{\circ})[1/p]_{\mathfrak{m}} = \dim M_{\infty}(\sigma_{min}^{\circ})[1/p]_{\mathfrak{m}}$. Moreover $\dim M_{\infty}(\sigma_{min}^{\circ})[1/p]_{\mathfrak{m}} = \dim R_{\infty}(\sigma_{min})[1/p]_{\mathfrak{m}}$ since $R_{\infty}(\sigma_{min})$ acts faithfully on  $M_{\infty}(\sigma_{min}^{\circ})$. 

By assumption the ring $R_{\infty}(\sigma_{min})[1/p]_{\mathfrak{m}}$ is regular, it follows that the module $M_{\infty}(\sigma_{min}^{\circ})[1/p]_{\mathfrak{m}}$ has finite projective dimension over this ring. We also have that 
$\mathrm{depth} \: R_{\infty}(\sigma_{min})[1/p]_{\mathfrak{m}} = \dim R_{\infty}(\sigma_{min})[1/p]_{\mathfrak{m}}$. Then by \\Auslander-Buchsbaum formula (Theorem 19.1 \cite{MR1011461}), $M_{\infty}(\sigma_{min}^{\circ})[1/p]_{\mathfrak{m}}$ is free over $R_{\infty}(\sigma_{min})[1/p]_{\mathfrak{m}}$. It follows that $M_{\infty}(\sigma_{min}^{\circ})[1/p]$ is locally free (i.e. projective) over the regular locus of $R_{\infty}(\sigma_{min})[1/p]$.

Let's check that it is locally free of rank one. Let $x\in \mathrm{Supp}\: M_{\infty}(\sigma_{min}^{\circ})$ and $y \in \mathrm{Supp}( M_{\infty}(\sigma_{min}^{\circ})) \cap V(\mathfrak{a})$ a smooth closed point that lies on the same irreducible component $V$ as $x$. Such a point $y$ always exists because $V\cap V(\mathfrak{a})\neq 0$. Indeed by Proposition \ref{4.26}, if $y \in V\cap V(\mathfrak{a})$, then $y$ is smooth. Since $M_{\infty}(\sigma_{min}^{\circ})[1/p]$ is projective the local rank is constant on irreducible components of the support. It would be enough to compute the local rank at $y$, which is given by
\[\dim_{E} M_{\infty}(\sigma_{min}^{\circ}) \otimes_{R_{\infty}} \kappa(y) = \dim_E \mathrm{Hom}_K(\sigma_{min},V(r_y)^{l.alg}),\]

\noindent according to Proposition 2.22 \cite{MR3306557}. By Proposition \ref{4.26} we have that $V(r_{y})^{l.alg} \simeq \pi_{sm}(r_y) \otimes \pi_{alg}(r_y)$. Moreover, since $\sigma_{alg}$ is an irreducible representation of a Lie algebra of $G$, we have 
\[\dim_E \mathrm{Hom}_K(\sigma_{min},V(r_y)^{l.alg})=\dim_E \mathrm{Hom}_K(\sigma_{min}(\lambda),\pi_{sm}(r_y))\]

Then $\dim_E \mathrm{Hom}_K(\sigma_{min}(\lambda),\pi_{sm}(r_y))=1$ by Lemma 3.2 \cite{Pyv2}, because $\pi_{sm}(r_y)$ is generic.

\textbf{(2)}. The proof is the same as in Lemma 4.18 \cite{MR3529394}.
\end{proof}

\noindent The action of $\mathfrak{Z}_{\Omega}$ on $M_{\infty}(\sigma_{min}^{\circ})[1/p]$ induces an $E$-algebra homomorphism:
$$\alpha : \mathfrak{Z}_{\Omega} \longrightarrow \mathrm{End}_{R_{\infty}[1/p]}(M_{\infty}(\sigma_{min}^{\circ})[1/p])$$
\noindent From the Proposition \ref{4.9}, we deduce that:

\begin{thm}\label{4.10}
We have the following commutative diagram:
$$\xymatrix{
(\Sp R_{\infty}(\sigma_{min}) [1/p])^{reg} \ar@{^{(}->}[d] \ar[r]^-{\alpha^{\sharp}} & \Sp \mathcal{H}(\sigma_{min}) \\
\Sp R_{\infty}(\sigma_{min}) [1/p] \ar[r]^{pr} &\Sp R_{\tilde{\mathfrak{p}}}^{\square}(\sigma_{min})[1/p], \ar[u] }$$

\noindent where $(\Sp R_{\infty}(\sigma_{min}) [1/p])^{reg}$ is the regular locus of $\Sp R_{\infty}(\sigma_{min}) [1/p]$, $\alpha^{\sharp}$ the map induced by $\alpha$ and $pr: \Sp R_{\infty}(\sigma_{min})' [1/p] \longrightarrow \Sp R_{\tilde{\mathfrak{p}}}^{\square}(\sigma_{min})[1/p]$ the  map induced by $j:R_{\tilde{\mathfrak{p}}}^{\square}(\sigma_{min}) \longrightarrow R_{\infty}\otimes_{R_{\tilde{\mathfrak{p}}}^{\square}}R_{\tilde{\mathfrak{p}}}^{\square}(\sigma_{min})$, $x\mapsto 1 \otimes x$.
\end{thm}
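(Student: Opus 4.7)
The plan is to deduce the theorem from the two ingredients already in hand: Proposition \ref{4.9}(1), which gives local freeness of rank one on the regular locus, and Lemma \ref{4.8}, which identifies the Hecke action on $M_{\infty}(\sigma_{min}^{\circ})$ with the action coming from $\beta$. The structure is: first build $\alpha^{\sharp}$ on the regular locus (local freeness is the key input), then verify that the diagram commutes on points using the local-global compatibility encoded in Lemma \ref{4.8}.

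For the construction of $\alpha^{\sharp}$, note that the $R_{\infty}$-action on $M_{\infty}(\sigma_{min}^{\circ})[1/p]$ factors through $R_{\infty}(\sigma_{min})[1/p]$ by the definition of the latter, so $\alpha$ factors through the $R_{\infty}(\sigma_{min})[1/p]$-linear endomorphisms. On the regular locus $U := (\Sp R_{\infty}(\sigma_{min})[1/p])^{reg}$, Proposition \ref{4.9}(1) tells us that the coherent sheaf associated to $M_{\infty}(\sigma_{min}^{\circ})[1/p]$ is a line bundle, hence its endomorphism sheaf is canonically $\mathcal{O}_U$. Thus $\alpha$ restricts on $U$ to a ring homomorphism $\mathcal{H}(\sigma_{min}) \simeq \mathfrak{Z}_{\Omega} \to \mathcal{O}_U(U)$, and this defines the morphism of affine schemes $\alpha^{\sharp} : U \to \Sp \mathcal{H}(\sigma_{min})$. (One should check that the formation is compatible with restriction to distinguished opens, but this is immediate since endomorphisms of a rank-one locally free module commute with localisation.)

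For commutativity, it suffices to verify that for every $z \in \mathcal{H}(\sigma_{min})$ the image of $z$ under $\alpha$ agrees, inside $\mathrm{End}_{R_{\infty}(\sigma_{min})[1/p]}(M_{\infty}(\sigma_{min}^{\circ})[1/p])$, with the image of $\beta(z) \in R_{\tilde{\mathfrak{p}}}^{\square}(\sigma_{min})[1/p]$ through the composite $R_{\tilde{\mathfrak{p}}}^{\square}(\sigma_{min})[1/p] \to R_{\infty}(\sigma_{min})'[1/p] \to R_{\infty}(\sigma_{min})[1/p]$. Applying Lemma \ref{4.8} to $\mathcal{P} = \mathcal{P}_{min}$ and clearing denominators by inverting $p$ (which is harmless since $\mathcal{H}(\sigma_{min}) = \mathcal{H}(\sigma_{min}^{\circ})[1/p]$ and both $\beta$ and $\alpha$ become defined after inverting $p$), we obtain exactly this identity of endomorphisms. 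Restricting to $U$ and using the identification of endomorphisms with scalars produced in the previous paragraph gives the commutativity of the diagram.

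The substantive mathematical content has already been absorbed into Proposition \ref{4.9} and Lemma \ref{4.8}, so the main remaining obstacle is bookkeeping rather than a genuine difficulty: one must be careful that the morphism $\alpha^{\sharp}$ is well-defined globally on $U$ (and not just on points), that the factorisation through $R_{\infty}(\sigma_{min})[1/p]$ used in the construction is compatible with the factorisation through $R_{\infty}(\sigma_{min})'[1/p]$ used in Lemma \ref{4.8} via the surjection $R_{\infty}(\sigma_{min})' \twoheadrightarrow R_{\infty}(\sigma_{min})$ of Lemma \ref{4.7}, and that the identifications $\mathcal{H}(\sigma_{min}) \simeq \mathcal{H}(\sigma_{min}(\lambda)) \simeq \mathfrak{Z}_{\Omega}$ used on the two sides of the diagram match up. None of these presents a real difficulty, since they have been verified in the preceding lemmas.
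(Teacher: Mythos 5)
Your proof is correct and takes essentially the same approach as the paper's: both construct $\alpha^{\sharp}$ from the rank-one local freeness in Proposition~\ref{4.9}(1) and deduce commutativity from Lemma~\ref{4.8} after passing to integral Hecke operators via a $p$-power scaling (the paper phrases this as: for $z\in\mathcal{H}(\sigma_{min}^{\circ})$ there is $C\geq 0$ with $\beta(p^{C}z)\in R_{\tilde{\mathfrak{p}}}^{\square}(\sigma_{min})$, and $\mathcal{H}(\sigma_{min}^{\circ})$ spans $\mathcal{H}(\sigma_{min})$ over $E$). The only stylistic difference is that the paper checks the identity $x(\alpha(z))=x(j(\beta(z)))$ on closed points, invoking that the rings are reduced and Jacobson, whereas you check it directly at the level of the endomorphism algebra; the two are equivalent.
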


\begin{proof} We proceed here as in the proof of Theorem 4.19 \cite{MR3529394}. It is enough to check all it on points since all the rings are Jacobson and reduced. Let $x: R_{\infty}(\sigma_{min}) [1/p] \rightarrow E$ a closed point smooth point. Note firstly that if $z \in \mathcal{H}(\sigma_{min}^{\circ})$ is such that $\beta(z) \in R_{\tilde{\mathfrak{p}}}^{\square}(\sigma_{min})$, then $x(\alpha(z))=x(j(\beta(z)))$ by Lemma \ref{4.8}. Since $R_\infty(\sigma_{min})$ is $p$-torsion free by Lemma \ref{4.7}, it is therefore enough to show that $\mathcal{H}(\sigma_{min}^{\circ})$ is spanned over $E$ by such elements. But, $\mathcal{H}(\sigma_{min}^{\circ})$ certainly spans $\mathcal{H}(\sigma_{min})$ over $E$, so it is enough to show that for any element $z \in \mathcal{H}(\sigma_{min}^{\circ})$ , we have $\beta(p^{C}z) \in R_{\tilde{\mathfrak{p}}}^{\square}(\sigma_{min})$ for some $C \geq 0$. The latter condition is obviously true, this concludes the proof.
\end{proof}

\section{Support of patched modules}\label{LA.5}

Let $(J,\lambda)$ be the type, a locally algebraic representation $\lambda \otimes (\sigma_{alg}|J)$ of $J$ will be again denoted by $\lambda$. We have also a patched module  $M_{\infty}(\lambda^{\circ}):=\left( \mathrm{Hom}_{\mathcal{O}[[J]]}^{cont}(M_{\infty}, (\lambda^{\circ})^{d})\right) ^{d}$, where $\lambda^{\circ}$ is a $J$-stable lattice in $\lambda$. Define also $R_{\infty}(\lambda):=R_{\infty}/\mathrm{ann}(M_{\infty}(\lambda^{\circ}))$. We would like to have some statements about a support of patched modules. More precisely, we will prove that $M_{\infty}(\sigma_{min}^{\circ})$ and $M_{\infty}(\lambda^{\circ})$ have the same support.

\begin{prop}\label{4.33}
$$\mathrm{Supp}(M_{\infty}(\sigma_{min}^{\circ})) = \mathrm{Supp}(M_{\infty}(\lambda^{\circ}))$$
\end{prop}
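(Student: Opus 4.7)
The strategy is to pass to the generic fibre, decompose $M_\infty(\lambda^\circ)[1/p]$ in terms of the $M_\infty(\sigma_{\mathcal{P}}^\circ)[1/p]$, and then exploit that $\sigma_{min}(\lambda)$ appears in every generic smooth representation of the Bernstein component $\Omega$. Since $R_\infty(\sigma_{min})$ and $R_\infty(\lambda) := R_\infty/\mathrm{ann}(M_\infty(\lambda^\circ))$ are both $\mathcal{O}$-torsion free (by Lemma~\ref{4.7} and the same argument applied to $\lambda$), it suffices to prove the equality of supports after inverting $p$.

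By Frobenius reciprocity one has $M_\infty(\lambda^\circ) \simeq M_\infty(\mathrm{Ind}_J^K \lambda^\circ)$, and on the generic fibre $\mathrm{Ind}_J^K \lambda$ is a semisimple $K$-representation whose irreducible constituents are exactly the $\sigma_{\mathcal{P}}$, each appearing with some multiplicity $m_{\mathcal{P}}\geq 1$ and with $m_{\min}=m_{\max}=1$. Hence
$$M_\infty(\lambda^\circ)[1/p] \simeq \bigoplus_{\mathcal{P}} M_\infty(\sigma_{\mathcal{P}}^\circ)[1/p]^{\oplus m_{\mathcal{P}}},$$
which immediately yields $\mathrm{Supp}(M_\infty(\sigma_{min}^\circ)) \subseteq \mathrm{Supp}(M_\infty(\lambda^\circ))$ and reduces the reverse inclusion to showing $\mathrm{Supp}(M_\infty(\sigma_{\mathcal{P}}^\circ)) \subseteq \mathrm{Supp}(M_\infty(\sigma_{min}^\circ))$ for every $\mathcal{P}$.

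For this last step I would argue component by component. The proofs of Propositions~\ref{4.9}(2) and \ref{4.26} go through verbatim with $\sigma_{min}$ replaced by $\sigma_{\mathcal{P}}$, so each irreducible component of $\mathrm{Supp}(M_\infty(\sigma_{\mathcal{P}}^\circ))[1/p]$ is a component of $\Sp R_\infty(\sigma_{\mathcal{P}})'[1/p]$ and contains a Zariski-dense set of automorphic smooth closed points $y\in V(\mathfrak{a})$. At any such $y$ one has $V(r_y)^{l.alg} \simeq \pi_{sm}(r_y)\otimes \pi_{alg}(r_y)$ with $\pi_{sm}(r_y)$ generic; Lemma~3.2 of \cite{Pyv2} then forces $\mathrm{Hom}_K(\sigma_{min}, V(r_y)^{l.alg})\neq 0$, so $y\in\mathrm{Supp}(M_\infty(\sigma_{min}^\circ))$. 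Closedness of the latter then places the entire component inside it, concluding the proof.

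\textbf{Main obstacle.} The most delicate ingredient is checking that the proofs of Propositions~\ref{4.9} and \ref{4.26} adapt verbatim when $\sigma_{min}$ is replaced by an arbitrary $\sigma_{\mathcal{P}}$. Cohen--Macaulayness of $M_\infty(\sigma_{\mathcal{P}}^\circ)$ is already supplied by Lemma~\ref{4.7}, so the remaining substantive point is the density of automorphic smooth points in each irreducible component of $\mathrm{Supp}(M_\infty(\sigma_{\mathcal{P}}^\circ))$; this is a standard dimension count in the Taylor--Wiles--Kisin patching method and does not use $\mathcal{P}=\mathcal{P}_{min}$ in any essential way.
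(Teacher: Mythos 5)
Your decomposition $M_\infty(\lambda^\circ)[1/p]\simeq\bigoplus_{\mathcal{P}}M_\infty(\sigma_{\mathcal{P}}^\circ)[1/p]^{\oplus m_{\mathcal{P}}}$, the resulting reduction to proving $\mathrm{Supp}\,M_\infty(\sigma_{\mathcal{P}}^\circ)\subseteq\mathrm{Supp}\,M_\infty(\sigma_{min}^\circ)$ for every $\mathcal{P}$, and the use of genericity plus the type-theoretic result from \cite{Pyv2} to get a point of each component of the left-hand support into the right-hand support all match the paper's argument in spirit (the paper phrases it with $\lambda$ directly rather than splitting into the $\sigma_{\mathcal{P}}$, and cites Theorem 3.1 rather than Lemma 3.2 of \cite{Pyv2}, but these are cosmetic differences).

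However, the final deduction contains a genuine error. You assert that each irreducible component of $\mathrm{Supp}\,M_\infty(\sigma_{\mathcal{P}}^\circ)[1/p]$ contains a \emph{Zariski-dense} set of automorphic smooth closed points $y\in V(\mathfrak{a})$, and then conclude by closedness of $\mathrm{Supp}\,M_\infty(\sigma_{min}^\circ)$. This cannot work: the ring $\bigl(R_\infty(\sigma_{\mathcal{P}})/\mathfrak{a}\bigr)[1/p]$ is zero-dimensional, so $V(\mathfrak{a})\cap\Spm R_\infty(\sigma_{\mathcal{P}})[1/p]$ is a \emph{finite} set, while the irreducible components in question are positive-dimensional. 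A finite set cannot be Zariski-dense in such a component, so the closedness argument collapses. The dimension count you allude to (Lemma 3.9 of \cite{MR3544298}) only produces a \emph{single} automorphic closed point on each irreducible component, not a dense family.

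The missing ingredient is exactly what the paper uses in its place: the automorphic point $y$ produced by the dimension count is \emph{smooth} (Proposition~\ref{4.26}, via genericity and Theorem 1.2.7 of \cite{MR3546966} together with Lemma~\ref{4.11}), hence lies on a unique irreducible component of $\Sp R_\infty(\sigma_{min})'[1/p]$. Combined with the statement (Proposition~\ref{4.9}(2) and its analogue for $\sigma_{\mathcal{P}}$, both resting on Cohen--Macaulayness and equidimensionality from Lemma~\ref{4.7}) that $\mathrm{Supp}\,M_\infty(\sigma_{min}^\circ)[1/p]$ and $\mathrm{Supp}\,M_\infty(\sigma_{\mathcal{P}}^\circ)[1/p]$ are each a union of irreducible components of the same ambient reduced scheme $\Sp R_\infty(\sigma_{min})'[1/p]$, the unique component through $y$ must be the one you started with, and it is therefore contained in $\mathrm{Supp}\,M_\infty(\sigma_{min}^\circ)$. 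So your plan is salvageable by replacing ``Zariski-dense + closedness'' with ``one smooth automorphic point + uniqueness of the component through a smooth point + both supports are unions of components of a common scheme''; as written the justification of the last step is wrong.
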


\begin{proof} It follows from decomposition:
\[\mathrm{Ind}_J^K \lambda = \bigoplus_{\mathcal{P}} \sigma_{\mathcal{P}}^{\oplus m_{\mathcal{P}}}\]

\noindent that 
\[M_{\infty}(\lambda^{\circ})= \bigoplus_{\mathcal{P}} M_{\infty}(\sigma_{\mathcal{P}}^{\circ})^{\oplus m_{\mathcal{P}}}.\]

\noindent Then $\mathrm{Supp}(M_{\infty}(\sigma_{min}^{\circ})) \subseteq \mathrm{Supp}(M_{\infty}(\lambda^{\circ}))$. By definition, $\mathrm{Supp}(M_{\infty}(\sigma_{min}^{\circ})) = \Sp R_{\infty}(\sigma_{min})$ and also $\mathrm{Supp}(M_{\infty}(\lambda^{\circ})) = \Sp R_{\infty}(\lambda)$. Let $V(\mathfrak{p})$ be an irreducible component of the spectrum  $\Sp R_{\infty}(\lambda)$. It is enough to find a point $x \in V(\mathfrak{p})$ such that $x \notin V(\mathfrak{q})$ for any minimal prime $\mathfrak{q}$ of $R_{\infty}(\lambda)$ such that $\mathfrak{q} \neq \mathfrak{p}$ and $x \in \Sp R_{\infty}(\sigma_{min})$. The ideal $\mathfrak{a}$ is generated by a regular sequence $(y_1,\ldots,y_h)$ and $y_1$,\ldots, $y_h$, $\varpi$ is a system of parameters for $\Sp R_{\infty}(\lambda)/\mathfrak{p}$. Then by Lemma 3.9 \cite{MR3544298}, $V(\mathfrak{p})$ contains a closed point $x \in  \Sp R_{\infty}(\lambda)/(y_1,\ldots y_h)[1/p]$. The point $x$ is smooth by the Lemma \ref{4.26}, hence it does not lie on the intersection of irreducible components.

We have that $x \in  \Sp R_{\infty}(\lambda)[1/p] \cap V(\mathfrak{a})$, so it is a closed point of $\mathrm{Supp}(M_{\infty}(\lambda^{\circ}))$, then 
$$M_{\infty}(\lambda^{\circ}) \otimes_{R_{\infty}} \kappa(x)\neq 0$$

\noindent and by Proposition 2.22 \cite{MR3306557}, we have that:
$$M_{\infty}(\lambda^{\circ}) \otimes_{R_{\infty}} \kappa(x) = \mathrm{Hom}_{E}^{cont}\left(\mathrm{Hom}_{J} (\lambda, V(r_{x})^{l.alg}),E\right) \neq 0$$

\noindent Then by Proposition \ref{4.26} we have that $V(r_{x})^{l.alg} \simeq \pi_{sm}(r_x) \otimes \pi_{alg}(r_x)$. Moreover the representation $\pi_{sm}(r_x)$ is generic,  it follows then from Theorem 3.1 \cite{Pyv2} that we also have $\mathrm{Hom}_{K}(\sigma_{min}, V(r_{x})^{l.alg}) \neq 0$. This means that $x \in  \Sp R_{\infty}(\sigma_{min})[1/p] \cap V(\mathfrak{a})$. 
\end{proof}

\section{Computation of locally algebraic vectors}\label{LA.6}

By Proposition \ref{4.33}, we have $\mathrm{Supp}(M_{\infty}(\sigma_{min}^{\circ})) = \mathrm{Supp}(M_{\infty}(\lambda^{\circ}))$. In what follows we always identify these two sets, so we have $\Sp R_{\infty}(\sigma_{min}) = \mathrm{Supp}(M_{\infty}(\sigma_{min}^{\circ})) = \mathrm{Supp}(M_{\infty}(\lambda^{\circ})) = \Sp R_{\infty}(\lambda)$. Let $x\in \Spm R_{\infty} [1/p]$ be such that $V(r_{x}) \neq 0$. Assume moreover that $x \in \mathrm{Supp}(M_{\infty}(\lambda^{\circ}))$ and that the representation  $\pi_{sm}(r_{x}):=r_{p}^{-1} (WD(r_x))$ is generic and irreducible. By definition $\pi_{sm}(r_x)$ lies in $\Omega $.

As always for any partition-valued function $\mathcal{P}$, we will write $\sigma_{\mathcal{P}} := \sigma_{\mathcal{P}}(\lambda) \otimes \sigma_{alg}$, so that $(\sigma_{\mathcal{P}})_{sm}=\sigma_{\mathcal{P}}(\lambda)$ and $(\sigma_{\mathcal{P}})_{alg}=\sigma_{alg}$.

By Proposition 4.33 \cite{MR3529394}, we have $V(r_{x})^{l.alg} = \pi_{x} \otimes \pi_{alg}(r_x)$, where $\pi_{x}$ is an admissible smooth representation which lies in $\Omega$.

Since $x \in \mathrm{Supp}(M_{\infty}(\sigma_{min}^{\circ}))$, then by Proposition 2.22 \cite{MR3306557} we have that $0 \neq \mathrm{Hom}_{K}(\sigma_{min}, V(r_{x})^{l.alg}) = \mathrm{Hom}_{K}(\sigma_{min}(\lambda),\pi_{x})$

The action of $\mathfrak{Z}_{\Omega}$ on $\pi_{sm}(r_x)$ defines a $E$-algebra morphism $ \chi_{sm} : \mathfrak{Z}_{\Omega} \longrightarrow \mathrm{End}_{G}(\pi_{sm}(r_x))\simeq E$, the kernel of such a morphism is a maximal ideal in $\mathfrak{Z}_{\Omega}$.


The space $M_{\infty}(\sigma_{min}^{\circ}) \otimes_{R_{\infty}} \kappa(x)$ is one dimensional by Proposition \ref{4.9} and the Hecke algebra $\mathcal{H}(\sigma_{min}(\lambda))$ acts on this space by a character $\chi' : \mathcal{H}(\sigma_{min}(\lambda)) \longrightarrow E$. Composing $\chi'$ with an isomorphism $\mathfrak{Z}_{\Omega} \simeq \mathcal{H}(\sigma_{min})$, obtained from Lemma 1.4 \cite{MR2290601} and Corollary 7.2 \cite{Pyv1}, we get an $E$-algebra morphism $ \chi : \mathfrak{Z}_{\Omega} \longrightarrow  E$.

\begin{lemma}\label{4.24}
The $E$-algebra morphisms $\chi$ and $\chi_{sm}$ defined above coincide.
\end{lemma}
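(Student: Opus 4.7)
The plan is to show that both $\chi$ and $\chi_{sm}$ arise as the composition of the interpolation map $\beta$ of Theorem \ref{4.4} with evaluation at the same closed point $pr(x) \in \Spm R_{\tilde{\mathfrak{p}}}^{\square}(\sigma_{min})[1/p]$, transported through the fixed chain of isomorphisms $\mathfrak{Z}_{\Omega} \simeq \mathcal{H}(\sigma_{min}(\lambda)) \simeq \mathcal{H}(\sigma_{min})$ coming from Corollary 7.2 \cite{Pyv1} and Lemma 1.4 \cite{MR2290601}. Once this is established, equality is immediate.

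First I would verify that $x$, regarded as a point of $\Sp R_{\infty}(\sigma_{min})[1/p]$, lies in the regular locus, so that both Theorem \ref{4.10} and Proposition \ref{4.9}(1) apply. The hypothesis that $\pi_{sm}(r_x)$ is generic and irreducible, combined with Theorem 1.2.7 of \cite{MR3546966}, implies that $pr(x)$ is a smooth point of $\Sp R_{\tilde{\mathfrak{p}}}^{\square}(\sigma_{min})[1/p]$. Lemma \ref{4.11} then promotes this to smoothness of $x$ itself.

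Next I would unfold the commutative diagram of Theorem \ref{4.10} at the $E$-valued point $x$. This says that the character of $\mathcal{H}(\sigma_{min})$ describing its action on the one-dimensional fiber $M_{\infty}(\sigma_{min}^{\circ}) \otimes_{R_{\infty}} \kappa(x)$ is the composition $ev_{pr(x)} \circ \beta$. Since by construction (see the discussion preceding Lemma \ref{4.8}) the action of $\mathcal{H}(\sigma_{min})$ on $M_{\infty}(\sigma_{min}^{\circ})$ is transported from the action of $\mathcal{H}(\sigma_{min}(\lambda))$ via Lemma 1.4 \cite{MR2290601}, and the action of $\mathfrak{Z}_{\Omega}$ on the same space is transported via Corollary 7.2 \cite{Pyv1}, the character $\chi$ is precisely $ev_{pr(x)} \circ \beta$ pulled back to $\mathfrak{Z}_{\Omega}$ along $\mathfrak{Z}_{\Omega} \simeq \mathcal{H}(\sigma_{min})$.

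For the parallel description of $\chi_{sm}$ I would appeal directly to Theorem \ref{4.4}: the action of $\mathcal{H}(\sigma_{min})$ on the smooth $G$-representation $\pi_{sm}(r_x)$ factors as $\beta$ followed by evaluation at $pr(x)$. Under the isomorphism $\mathfrak{Z}_{\Omega} \simeq \mathcal{H}(\sigma_{min})$, the $\mathcal{H}(\sigma_{min})$-action on $\pi_{sm}(r_x)$ corresponds to the Bernstein centre action, which is exactly $\chi_{sm}$. Hence $\chi_{sm}$ is also the pull-back of $ev_{pr(x)} \circ \beta$ along the same isomorphism, and comparing the two descriptions yields $\chi = \chi_{sm}$. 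The only non-routine ingredient is the smoothness check, which is immediate from the genericity criterion in \cite{MR3546966} and Lemma \ref{4.11}; the rest is a careful bookkeeping of the identifications, with both characters being read off from the same closed point $pr(x)$ via the same interpolation map $\beta$.
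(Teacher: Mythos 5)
Your proposal is correct and takes essentially the same route as the paper, whose proof is the one-line citation of Theorem \ref{4.10}, Theorem \ref{4.4}, and the isomorphism $\mathfrak{Z}_{\Omega} \simeq \mathcal{H}(\sigma_{min})$. You usefully make explicit the smoothness of $x$ (via Theorem 1.2.7 of \cite{MR3546966} and Lemma \ref{4.11}), which the paper leaves implicit but which is needed both for the one-dimensionality of the fiber and for the commutative diagram of Theorem \ref{4.10} to apply at $x$.
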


\begin{proof} It follows from Theorem \ref{4.10} and Theorem \ref{4.4} and the isomorphism $\mathfrak{Z}_{\Omega} \simeq \mathcal{H}(\sigma_{min})$ as above. 
\end{proof}

\begin{lemma}\label{4.25}
The representation $\pi_{sm}(r_x)$ is a $G$-subquotient of $\pi_{x}$.
\end{lemma}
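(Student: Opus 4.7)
The natural place to start is with the one-dimensional space $\mathrm{Hom}_K(\sigma_{min}(\lambda), \pi_x)$, which is nonzero by hypothesis. Frobenius reciprocity turns any nonzero element into a nonzero $G$-equivariant map $\phi : \mathrm{c\text{--} Ind}_K^G \sigma_{min}(\lambda) \to \pi_x$; let $V := \mathrm{im}(\phi)$. By the naturality of the Bernstein centre, $\mathfrak{Z}_\Omega$ acts on $\mathrm{Hom}_G(\mathrm{c\text{--} Ind}_K^G \sigma_{min}(\lambda), \pi_x)$ by the same scalar by which it acts on $\mathrm{Hom}_K(\sigma_{min}(\lambda), \pi_x) \simeq M_\infty(\sigma_{min}^\circ) \otimes_{R_\infty} \kappa(x)$, and Lemma \ref{4.24} identifies that scalar with $\chi_{sm}$. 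Concretely, $z_{\pi_x} \circ \phi = \chi_{sm}(z) \phi$ for every $z \in \mathfrak{Z}_\Omega$, so $V$ is annihilated by $\ker(\chi_{sm})$ and every irreducible subquotient of $V$ lies in $\Omega$ with $\mathfrak{Z}_\Omega$-character $\chi_{sm}$.

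Next I would observe that $V$ has finite length. It is a quotient of the cyclic $G$-module $\mathrm{c\text{--} Ind}_K^G \sigma_{min}(\lambda)$, hence finitely generated, and as a subobject of the admissible representation $\pi_x$ it is itself admissible; for a reductive $p$-adic group these two properties force finite length. Moreover, the Frobenius adjoint of $\phi$, namely $\psi(v) = \phi([1,v])$, gives a nonzero, hence injective, $K$-map $\sigma_{min}(\lambda) \hookrightarrow V$. Since smooth $K$-representations are semisimple, the functor $\mathrm{Hom}_K(\sigma_{min}(\lambda), -)$ is exact on smooth $G$-representations, so summing over a Jordan-H\"older filtration of $V$ yields at least one irreducible subquotient $\pi'$ with $\mathrm{Hom}_K(\sigma_{min}(\lambda), \pi') \neq 0$.

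Finally, I would invoke Theorem 3.1 of \cite{Pyv2} in its converse direction: among irreducible representations of $G$ lying in $\Omega$, the condition $\mathrm{Hom}_K(\sigma_{min}(\lambda), \pi') \neq 0$ characterises the generic ones. Thus $\pi'$ is generic. Combined with the Bernstein-centre constraint from the first step, $\pi'$ is a generic irreducible representation in $\Omega$ with the same supercuspidal support as $\pi_{sm}(r_x)$; the Bernstein-Zelevinsky classification makes such a representation unique, so $\pi' \simeq \pi_{sm}(r_x)$, and this exhibits $\pi_{sm}(r_x)$ as a subquotient of $V \subseteq \pi_x$.

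The main obstacle is the step extracting genericity from the presence of $\sigma_{min}(\lambda)$: once we only know that some irreducible subquotient of $V$ contains $\sigma_{min}(\lambda)$ as a $K$-type, we must rule out non-generic candidates inside the $\chi_{sm}$-fibre of the Bernstein component. Without the converse in Theorem 3.1 of \cite{Pyv2} (or an equivalent input pinning $\sigma_{min}(\lambda)$ to generic representations), there is no way to identify $\pi'$ with $\pi_{sm}(r_x)$; every other piece of the argument is formal.
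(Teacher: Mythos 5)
Your proof is correct and follows essentially the same route as the paper's. Both arguments produce a nonzero $G$-map out of $\mathrm{c\text{--}Ind}_K^G\sigma_{min}(\lambda)$ into $\pi_x$ via Frobenius reciprocity, use Lemma~\ref{4.24} to identify the $\mathfrak{Z}_{\Omega}$-character as $\chi_{sm}$, invoke Theorem~3.1 of \cite{Pyv2} to recognise the relevant irreducible constituent as generic, and then appeal to the uniqueness of the generic irreducible representation with a prescribed supercuspidal support to identify it with $\pi_{sm}(r_x)$. The only organisational differences are cosmetic: the paper passes to the quotient $\gamma_x = \mathrm{c\text{--}Ind}_K^G \sigma_{min}(\lambda)\otimes_{\mathfrak{Z}_{\Omega},\chi}E$ and takes an irreducible quotient of $\gamma_x/\ker(\psi)\hookrightarrow\pi_x$, which sidesteps the finite-length discussion since a finitely generated smooth representation always has an irreducible quotient; and for the uniqueness step the paper cites Corollary~3.11 of \cite{MR3529394} (socle of $\mathrm{c\text{--}Ind}_K^G\sigma_{max}(\lambda)\otimes_{\mathfrak{Z}_{\Omega},\chi}E$) rather than invoking Bernstein--Zelevinsky directly, but these are the same fact. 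Your closing worry about the converse of Theorem~3.1 of \cite{Pyv2} is well placed but unproblematic: that reference does give the equivalence, and the paper relies on exactly the same input.
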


\begin{proof} Define $\gamma_{x}:= \mathrm{c\text{--} Ind}_{K}^{G} (\sigma_{min}(\lambda)) \otimes_{\mathfrak{Z}_{\Omega},\chi} E$. Since $x \in \mathrm{Supp}(M_{\infty}(\sigma_{min}^{\circ}))$, we have by definition $0 \neq \mathrm{Hom}_{K}(\sigma_{min},V(r_{x})^{l.alg}) = \mathrm{Hom}_{K}(\sigma_{min}(\lambda),\pi_{x})$. So, there exists a non zero map $\psi: \gamma_{x} \longrightarrow \pi_{x}$.

Let $\pi'$ be any irreducible quotient of $\gamma_x$; then $\mathrm{Hom}_{K}(\sigma_{min}(\lambda),\pi') \neq 0$, by Theorem 3.1 \cite{Pyv2}  since $\pi'$ is generic. It follows that by Corollary 3.11 \cite{MR3529394}, that the representation $\pi'$ is the socle of $\mathrm{c\text{--} Ind}_{K}^{G} \sigma_{max}(\lambda) \otimes_{\mathfrak{Z}_{\Omega},\chi} E$. We write it $\pi' \simeq \mathrm{soc}_{G}(\mathrm{c\text{--} Ind}_{K}^{G} (\sigma_{max}(\lambda)) \otimes_{\mathfrak{Z}_{\Omega},\chi} E)$. Similarly by corollary 3.11 \cite{MR3529394}, $\pi_{sm}(r_x) \simeq \mathrm{soc}_{G}(\mathrm{c\text{--} Ind}_{K}^{G} (\sigma_{max}(\lambda)) \otimes_{\mathfrak{Z}_{\Omega},\chi_{sm}} E)$. By Lemma \ref{4.24} $\chi_{sm} = \chi$, then $\pi' \simeq \pi_{sm}(r_x)$. So at this stage we proved that the cosocle of $\gamma_x$ is generic, irreducible and isomorphic to $\pi_{sm}(r_x)$.

Let $\kappa = \mathrm{Ker}(\psi)$, then $\gamma_x/\kappa\hookrightarrow \pi_{x}$. Let now $\pi'$ be any irreducible quotient of $\gamma_x/\kappa$; in particular $\pi'$ is a sub-quotient of $\pi_{x}$. Moreover $\gamma_x \twoheadrightarrow \gamma_x/\kappa \twoheadrightarrow \pi'$, so $\pi'$ is an irreducible quotient of $\gamma_x$. By what  we have proven above, $\pi'\simeq \pi_{sm}(r_x)$. It follows that $\pi_{sm}(r_x)$ is a sub-quotient of $\pi_{x}$.
\end{proof}

\begin{prop}\label{4.27}
Let $x$, $y$ be two closed, $E$-valued points of $\Sp R_{\infty} (\sigma_{min})[1/p]$, lying on the same irreducible component. Let $\mathcal{P}$ be a partition-valued function. If $x$ is smooth, then
$$\dim_E \mathrm{Hom}_{K}(\sigma_{\mathcal{P}}, V(r_x)^{l.alg}) \leq \dim_E \mathrm{Hom}_{K}(\sigma_{\mathcal{P}}, V(r_y)^{l.alg})$$
\end{prop}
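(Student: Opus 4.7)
The plan is to convert both sides of the inequality into fiber dimensions of the finitely generated $R_\infty$-module $M_\infty(\sigma_{\mathcal{P}}^\circ)$ and then combine upper semi-continuity of fiber dimensions with local freeness at the smooth point $x$.

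First, I would apply Proposition 2.22 of \cite{MR3306557}, already invoked above, to rewrite, for $z\in\{x,y\}$,
$$\dim_E \mathrm{Hom}_K(\sigma_{\mathcal{P}}, V(r_z)^{l.alg}) = \dim_{\kappa(z)} \bigl(M_\infty(\sigma_{\mathcal{P}}^\circ)\otimes_{R_\infty}\kappa(z)\bigr).$$
Since $M_\infty(\sigma_{\mathcal{P}}^\circ)$ is finitely generated over $R_\infty$, Nakayama's lemma makes the function $z\mapsto\dim_{\kappa(z)}\bigl(M_\infty(\sigma_{\mathcal{P}}^\circ)\otimes_{R_\infty}\kappa(z)\bigr)$ upper semi-continuous on $\Sp R_\infty$. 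Hence on the irreducible component $V$ of $\Sp R_\infty(\sigma_{min})[1/p]$ containing $x$ and $y$, its value at any closed point of $V$ is at least its value $r_V$ at the generic point $\eta_V$. It is therefore enough to establish that the fiber dimension at the smooth point $x$ equals exactly $r_V$.

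For this I would exploit the decomposition $\mathrm{Ind}_J^K\lambda\simeq\bigoplus_{\mathcal{P}'}\sigma_{\mathcal{P}'}^{\oplus m_{\mathcal{P}'}}$ used in the proof of Proposition \ref{4.33}, which yields the $R_\infty$-linear direct sum decomposition
$$M_\infty(\lambda^\circ)\simeq\bigoplus_{\mathcal{P}'} M_\infty(\sigma_{\mathcal{P}'}^\circ)^{\oplus m_{\mathcal{P}'}}.$$
By Lemma \ref{4.7} each summand $M_\infty(\sigma_{\mathcal{P}'}^\circ)$ is Cohen--Macaulay, so $M_\infty(\lambda^\circ)$ is Cohen--Macaulay, and by Proposition \ref{4.33} its support is $\Sp R_\infty(\sigma_{min})=\Sp R_\infty(\lambda)$; together these force $M_\infty(\lambda^\circ)$ to be maximal Cohen--Macaulay over the reduced ring $R_\infty(\sigma_{min})$. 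Since $x$ is a smooth point of $\Sp R_\infty(\sigma_{min})[1/p]$, the local ring $R_\infty(\sigma_{min})[1/p]_x$ is regular, and the Auslander--Buchsbaum formula (exactly as applied in the proof of Proposition \ref{4.9}(1)) shows that $M_\infty(\lambda^\circ)[1/p]_x$ is free over $R_\infty(\sigma_{min})[1/p]_x$. Being a direct summand of a free module over a local ring, $M_\infty(\sigma_{\mathcal{P}}^\circ)[1/p]_x$ is then itself free over $R_\infty(\sigma_{min})[1/p]_x$.

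Because $x$ is smooth, it lies on the unique irreducible component $V$ of $\Sp R_\infty(\sigma_{min})[1/p]$, so the rank of this free module equals the generic rank $r_V$ on $V$; consequently $\dim_{\kappa(x)}\bigl(M_\infty(\sigma_{\mathcal{P}}^\circ)\otimes\kappa(x)\bigr)=r_V$, and combined with the semi-continuity estimate $\dim_{\kappa(y)}\bigl(M_\infty(\sigma_{\mathcal{P}}^\circ)\otimes\kappa(y)\bigr)\geq r_V$ this yields the required bound. The only mildly delicate point is verifying that $M_\infty(\lambda^\circ)$ is genuinely maximal Cohen--Macaulay over $R_\infty(\sigma_{min})$ rather than merely Cohen--Macaulay of possibly lower dimension, which is exactly what Proposition \ref{4.33} supplies; once this is in hand, Auslander--Buchsbaum plus semi-continuity does the rest.
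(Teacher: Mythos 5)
Your argument is correct and follows the same route as the paper's one-line proof, which defers to Proposition 4.34 of \cite{MR3529394}: identify the $\mathrm{Hom}$ spaces with fibre dimensions of $M_\infty(\sigma_{\mathcal{P}}^\circ)$ via Proposition 2.22 of \cite{MR3306557}, use Auslander--Buchsbaum to get local freeness at the smooth point, and conclude by upper semi-continuity along the common component. Your step of passing through the direct-sum decomposition $M_\infty(\lambda^\circ)\simeq\bigoplus_{\mathcal{P}'}M_\infty(\sigma_{\mathcal{P}'}^\circ)^{\oplus m_{\mathcal{P}'}}$ is a genuine clarification of the paper's ``replace $\lambda$ by $\sigma_{\mathcal{P}}$'' instruction: a literal substitution would require knowing that the smooth point $x$ of $\Sp R_\infty(\sigma_{min})[1/p]$ is also smooth on the possibly smaller $\Sp R_\infty(\sigma_{\mathcal{P}})[1/p]$ (equivalently, that the latter is a union of irreducible components of the former), whereas applying Auslander--Buchsbaum to the full-support module $M_\infty(\lambda^\circ)$ and then passing to the direct summand avoids that issue entirely. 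One small inaccuracy in the write-up: ``each summand is Cohen--Macaulay, so $M_\infty(\lambda^\circ)$ is Cohen--Macaulay'' runs in the wrong direction (a direct sum of CM modules is CM only when the summands have the same dimension); the clean statement, and the one actually supplied by Lemma~\ref{4.7} via Lemma 4.30 of \cite{MR3529394}, is that $M_\infty(\lambda^\circ)$ is CM outright, whence each direct summand $M_\infty(\sigma_{\mathcal{P}'}^\circ)$ is CM of the same dimension.
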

\begin{proof} The proof follows the proof of the Proposition 4.34 \cite{MR3529394} by replacing $\lambda$ with $\sigma_{\mathcal{P}}$ everywhere.
\end{proof} 

\begin{lemma}\label{4.28}
Let $x$, $y \in \Spm R_{\infty} (\sigma_{min})[1/p]$ be smooth points such that the monodromy operators of $WD(r_x)$ and $WD(r_y)$ are the same and $WD(r_x)|I_F \simeq WD(r_y)|I_F$. Then $\pi_{sm}(r_x)|K \simeq \pi_{sm}(r_y)|K$.
\end{lemma}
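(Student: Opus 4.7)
The plan is to translate the hypotheses via the local Langlands correspondence into Bernstein--Zelevinsky data for $\pi_{sm}(r_x)$ and $\pi_{sm}(r_y)$, and then exploit the fact that unramified character twists become trivial after restriction to $K$.

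Since $x$ and $y$ are smooth, Theorem 1.2.7 of \cite{MR3546966} ensures that $\pi_{sm}(r_x)$ and $\pi_{sm}(r_y)$ are irreducible and generic. The hypothesis $WD(r_x)|_{I_F}\simeq WD(r_y)|_{I_F}$ places them in the same Bernstein component $\Omega$. Applying the Bernstein--Zelevinsky classification (Lemma \ref{4.2}), one writes for $z\in\{x,y\}$
\[\pi_{sm}(r_z)\simeq Q(\Delta_{1,1}(z))\times\cdots\times Q(\Delta_{s,r_s}(z)),\]
where $\Delta_{i,j}(z)=(\pi_i\otimes\chi_{i,j}(z))\otimes\cdots\otimes(\pi_i\otimes\chi_{i,j}(z)\otimes|\det|^{k_{i,j}-1})$. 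By the Fontaine--Deligne dictionary between $(\varphi,N)$-modules and Weil--Deligne representations, the supercuspidal components $\pi_i$ and the segment lengths $k_{i,j}$ are read off from the inertia representation together with the Jordan canonical form of the monodromy operator $N$ on each inertial isotypic component. Under our hypotheses these data agree for $x$ and $y$; only the unramified twists $\chi_{i,j}(z)$ may depend on $z$.

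By genericity the segments are pairwise unlinked, so each Langlands quotient equals the corresponding parabolic induction, and there is a parabolic $P$ with Levi $L=\prod_{i,j}L_{i,j}$, independent of $z$, such that
\[\pi_{sm}(r_z)\simeq\mathrm{Ind}_{P}^{G}\Bigl(\bigotimes_{i,j}Q(\Delta_{i,j}(z))\Bigr).\]
The standard identity $Q(\Delta_{i,j}(z))\simeq Q(\Delta_{i,j}^{0})\otimes(\chi_{i,j}(z)\circ\det)$, where $\Delta_{i,j}^{0}$ is the segment obtained by setting the twists to the trivial character, combined with the triviality of unramified characters on $GL_m(\mathcal{O}_F)$, yields $Q(\Delta_{i,j}(z))|_{L_{i,j}\cap K}\simeq Q(\Delta_{i,j}^{0})|_{L_{i,j}\cap K}$, independently of $z$. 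The Iwasawa decomposition $G=PK$ then gives
\[\pi_{sm}(r_z)|_{K}\simeq \mathrm{Ind}_{P\cap K}^{K}\Bigl(\bigotimes_{i,j}Q(\Delta_{i,j}^{0})\big|_{L_{i,j}\cap K}\Bigr),\]
which is independent of $z$.

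The main obstacle is the precise Deligne--Fontaine dictionary: one must verify that on each inertia isotypic component, the Jordan blocks of $N$ are in bijection with the segments $\Delta_{i,j}$ attached to the corresponding supercuspidal $\pi_i$, with matching block sizes. Once this is granted, the remainder of the argument is a formal manipulation of parabolic inductions and unramified twists.
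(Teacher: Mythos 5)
Your proof is correct and follows essentially the same route as the paper's: use smoothness to get irreducibility and genericity, deduce from the inertial and monodromy hypotheses that the Bernstein--Zelevinsky data of $\pi_{sm}(r_x)$ and $\pi_{sm}(r_y)$ agree up to unramified twists of the segments, and conclude by observing that unramified twists disappear upon restriction to $K$. You supply the Iwasawa-decomposition detail that the paper compresses into a single sentence, and the ``obstacle'' you flag (that inertia plus the Jordan form of $N$ determine the segment lengths and supercuspidal support) is a standard feature of the local Langlands correspondence for $GL_n$ that the paper simply takes for granted.
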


\begin{proof} Since $x$ and $y$ are both smooth points, the representations $\pi_{sm}(r_x)$ and $\pi_{sm}(r_y)$ are both irreducible and generic. Moreover, it follows from hypotheses that $\pi_{sm}(r_x)$ and $\pi_{sm}(r_y)$ have the same inertial support and as well as the same number and the same size of segments for Bernstein-Zelevisky classification. So if $\pi_{sm}(r_x)=Q(\Delta_{1}) \times \ldots \times Q(\Delta_{r})$ then there are unramified characters $\chi_{i}$ such that $\pi_{sm}(r_y)=Q(\Delta_{1} \otimes \chi_{1})\times \ldots\times Q(\Delta_{r}\otimes \chi_{r})$. Restricting to $K$, we get $\pi_{sm}(r_x)|K \simeq \pi_{sm}(r_y)|K$.
\end{proof}

\begin{lemma}\label{4.29}
Let $x \in \Spm R_{\infty}(\sigma_{min})[1/p]$ be such that $\pi_{sm}(r_x)$ is generic, then $x \in \mathrm{Supp}\: M_{\infty}(\sigma_{\mathcal{P}_x})$
\end{lemma}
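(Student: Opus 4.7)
The goal is to show $x \in \mathrm{Supp}\, M_{\infty}(\sigma_{\mathcal{P}_x}^{\circ})$. By Proposition 2.22 of \cite{MR3306557}, this is equivalent to
\[
M_{\infty}(\sigma_{\mathcal{P}_x}^{\circ}) \otimes_{R_\infty} \kappa(x) = \mathrm{Hom}_E^{cont}\bigl(\mathrm{Hom}_K(\sigma_{\mathcal{P}_x}, V(r_x)^{l.alg}), E\bigr) \neq 0,
\]
so the task reduces to producing a nonzero $K$-homomorphism $\sigma_{\mathcal{P}_x} \to V(r_x)^{l.alg}$. I would first reduce from $\sigma_{\mathcal{P}_x} = \sigma_{\mathcal{P}_x}(\lambda)\otimes \sigma_{alg}$ to the smooth part: Proposition 4.33 of \cite{MR3529394} gives $V(r_x)^{l.alg} = \pi_x \otimes \pi_{alg}(r_x)$, and since $\sigma_{alg} = \pi_{alg}(r_x)|_K$ with $\sigma_{alg}$ absolutely irreducible as a $K$-representation, the computation reduces to showing
\[
\mathrm{Hom}_K(\sigma_{\mathcal{P}_x}(\lambda), \pi_x) \neq 0.
\]

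Next I would invoke Lemma \ref{4.25}, which says that the generic irreducible representation $\pi_{sm}(r_x)$ is a $G$-subquotient of $\pi_x$. It therefore suffices to show
\[
\mathrm{Hom}_K(\sigma_{\mathcal{P}_x}(\lambda), \pi_{sm}(r_x)) \neq 0,
\]
and then propagate non-vanishing upward. The upward step is routine: $\sigma_{\mathcal{P}_x}(\lambda)$ is a finite-dimensional smooth $K$-representation, and over $E$ (characteristic zero) the functor $\mathrm{Hom}_K(\sigma_{\mathcal{P}_x}(\lambda),-)$ is exact on smooth $K$-representations. Hence writing $\pi_{sm}(r_x) \simeq A/B$ with $B \subset A \subset \pi_x$, the exact sequence $0 \to B \to A \to \pi_{sm}(r_x) \to 0$ forces $\mathrm{Hom}_K(\sigma_{\mathcal{P}_x}(\lambda), A) \neq 0$, hence $\mathrm{Hom}_K(\sigma_{\mathcal{P}_x}(\lambda), \pi_x) \neq 0$.

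The remaining input, and in my view the key point, is the non-vanishing of $\mathrm{Hom}_K(\sigma_{\mathcal{P}_x}(\lambda), \pi_{sm}(r_x))$. This is exactly the content of the dictionary between monodromy and $K$-types recalled at the beginning of Section \ref{LA.3}: by the construction of $\sigma_{\mathcal{P}}(\lambda)$ in \cite[Theorem 3.7]{MR3769675}, together with Theorem 3.1 of \cite{Pyv2} (which computes the $\sigma_{\mathcal{P}}(\lambda)$-isotypic part of a generic irreducible representation in $\Omega$), one has $\mathrm{Hom}_K(\sigma_{\mathcal{P}}(\lambda), \pi_{sm}(r_x)) \neq 0$ whenever $\mathcal{P} \leq \mathcal{P}_x$, and in particular for $\mathcal{P} = \mathcal{P}_x$ itself, since $\mathcal{P}_x$ was defined precisely as the partition-valued function encoding the monodromy of $WD(r_x)$ (equivalently, of the admissible filtered $(\varphi,N)$-module $D_x$, which under the local Langlands bijection matches the inertial/monodromy data of $\pi_{sm}(r_x)$).

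The main obstacle, therefore, is not the commutative-algebra part of the argument but checking that the partition-valued function $\mathcal{P}_x$ built from the monodromy $N$ of $D_x$ via Lemma \ref{4.6} agrees with the partition-valued function characterising which $\sigma_{\mathcal{P}}(\lambda)$ contribute to $\pi_{sm}(r_x)$. This compatibility is built into the normalisation conventions and is exactly what is recorded in \cite{Pyv2}; once it is cited, the proof concludes in one line.
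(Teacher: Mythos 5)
Your proof is correct and follows essentially the same route as the paper: reduce to the smooth part, invoke Lemma \ref{4.25} to pass from $\pi_{sm}(r_x)$ (a subquotient of $\pi_x$) to $\pi_x$ via exactness of $\mathrm{Hom}_K(\sigma_{\mathcal{P}_x}(\lambda),-)$, identify $\mathrm{Hom}_K(\sigma_{\mathcal{P}_x}(\lambda),\pi_x)$ with $M_\infty(\sigma_{\mathcal{P}_x}^\circ)\otimes_{R_\infty}\kappa(x)$ by Proposition 2.22 of \cite{MR3306557}, and close with the non-vanishing of $\mathrm{Hom}_K(\sigma_{\mathcal{P}_x}(\lambda),\pi_{sm}(r_x))$ supplied by Theorem 3.7 of \cite{MR3769675}. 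The only small inaccuracy is your parenthetical gloss on Theorem 3.1 of \cite{Pyv2}, which in the paper concerns $\sigma_{min}(\lambda)$ rather than general $\sigma_{\mathcal{P}}(\lambda)$; the load-bearing citation for the non-vanishing at $\mathcal{P}_x$ is \cite[Theorem 3.7]{MR3769675}, which you do cite.
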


\begin{proof} By Lemma \ref{4.25}, $\pi_{sm}(r_x)$ is a subquotient of $\pi_{x}$, then for any partition-valued function $\mathcal{P}$, we have:
$$\dim_E \mathrm{Hom}_{K}((\sigma_{\mathcal{P}})_{sm}, \pi_{sm}(r_x)) \leq \dim_E \mathrm{Hom}_{K}((\sigma_{\mathcal{P}})_{sm}, \pi_{x})$$

\noindent In particular we have
$$\dim_E \mathrm{Hom}_{K}((\sigma_{\mathcal{P}_x})_{sm}, \pi_{sm}(r_x)) \leq \dim_E M_{\infty}(\sigma_{\mathcal{P}_x}) \otimes_{R_\infty} \kappa(x)$$

\noindent Since $\dim_E \mathrm{Hom}_{K}((\sigma_{\mathcal{P}_x})_{sm}, \pi_{sm}(r_x))\neq 0$ then $\dim_E M_{\infty}(\sigma_{\mathcal{P}_x}) \otimes_{R_\infty} \kappa(x) \neq 0$. This means that $x \in \mathrm{Supp}\: M_{\infty}(\sigma_{\mathcal{P}_x})$.
\end{proof}

\begin{prop}\label{4.30}
Let $x$ be any point of $\Spm R_{\infty}(\sigma_{min})[1/p]$. Then $x \in \mathrm{Supp} \: M_{\infty}(\sigma_{\mathcal{P}}^{\circ})$ implies that $\mathcal{P}_{x} \geq \mathcal{P}$.
\end{prop}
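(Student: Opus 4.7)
The plan is to deduce the result directly from the structural properties of the patched module that were already established in Lemma \ref{4.7}, combined with the defining property of the ring $R^{\square}_{\tilde{\mathfrak{p}}}(\sigma_{\mathcal{P}})$ from Section \ref{LA.3}. No new geometric or representation-theoretic input should be needed: the proposition is essentially a ``traceability'' statement through the tower
\[
R^{\square}_{\tilde{\mathfrak{p}}}(\sigma_{\mathcal{P}}) \longrightarrow R_{\infty}(\sigma_{\mathcal{P}})' \longrightarrow R_{\infty}(\sigma_{\mathcal{P}}).
\]

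First, I would unwind the hypothesis: the condition $x \in \mathrm{Supp}\: M_{\infty}(\sigma_{\mathcal{P}}^{\circ})$ means exactly that $x$ lies in $\Sp R_{\infty}(\sigma_{\mathcal{P}})$, since $R_{\infty}(\sigma_{\mathcal{P}})$ was defined as the faithful quotient of $R_\infty$ acting on $M_{\infty}(\sigma_{\mathcal{P}}^{\circ})$. By Lemma \ref{4.7}, $R_{\infty}(\sigma_{\mathcal{P}})$ is a quotient of $R_{\infty}(\sigma_{\mathcal{P}})' = R_{\infty}\otimes_{R_{\tilde{\mathfrak{p}}}^{\square}}R_{\tilde{\mathfrak{p}}}^{\square}(\sigma_{\mathcal{P}})$; hence $x$ corresponds to a closed point of $\Sp R_{\infty}(\sigma_{\mathcal{P}})'[1/p]$. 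In particular, the restriction of the structural morphism $R_{\tilde{\mathfrak{p}}}^{\square} \to R_\infty$ to $R_{\tilde{\mathfrak{p}}}^{\square}(\sigma_{\mathcal{P}})$ lands in the kernel of the $E$-algebra homomorphism defining $x$ after passing to $R_{\tilde{\mathfrak{p}}}^{\square}(\sigma_{\mathcal{P}})$, i.e.\ the image $\bar{x}$ of $x$ under the natural map $\Sp R_{\infty} \to \Sp R_{\tilde{\mathfrak{p}}}^{\square}$ factors through $\Sp R_{\tilde{\mathfrak{p}}}^{\square}(\sigma_{\mathcal{P}})$.

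Second, I would apply the defining property of $R_{\tilde{\mathfrak{p}}}^{\square}(\sigma_{\mathcal{P}})$ established in Section \ref{LA.3}: a closed point $y \in \Sp R_{\tilde{\mathfrak{p}}}^{\square}(\sigma_{min})[1/p]$ lies in $\Sp R_{\tilde{\mathfrak{p}}}^{\square}(\sigma_{\mathcal{P}})[1/p]$ if and only if $\mathcal{P}_y \geq \mathcal{P}$. Applied to $y = \bar{x}$, and recalling that $\mathcal{P}_x$ is by definition the partition-valued function attached to the monodromy of $D_{\bar{x}}$ (which depends only on $\bar{x}$), this gives $\mathcal{P}_x \geq \mathcal{P}$, which is exactly the desired conclusion.

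There is essentially no obstacle here; the only thing to be careful about is verifying that the notation $\mathcal{P}_x$ for $x \in \Spm R_{\infty}(\sigma_{min})[1/p]$ is compatible, via the structural map $R_{\tilde{\mathfrak{p}}}^{\square} \to R_\infty$, with the partition $\mathcal{P}_{\bar{x}}$ attached to a point on the local deformation ring, but this compatibility is immediate since the universal $(\varphi,N)$-module over $R_{\infty}(\sigma_{\mathcal{P}})'$ is pulled back from the one over $R_{\tilde{\mathfrak{p}}}^{\square}(\sigma_{\mathcal{P}})$, so its specialisation at $x$ agrees with the specialisation at $\bar{x}$. Once this is noted, the argument is a one-line diagram chase through the defining quotients.
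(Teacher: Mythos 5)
Your argument is correct and follows exactly the same route as the paper's proof: invoke Lemma \ref{4.7} to see that $x$ factors through $R_{\infty}(\sigma_{\mathcal{P}})'$, hence its image in $\Sp R_{\tilde{\mathfrak{p}}}^{\square}$ lies in $\Sp R_{\tilde{\mathfrak{p}}}^{\square}(\sigma_{\mathcal{P}})$, and then apply the defining property of that ring. Your write-up is somewhat more explicit (tracking the image $\bar x$ and noting the pullback compatibility of the universal $(\varphi,N)$-module), but the substance is identical to the paper's one-paragraph proof.
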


\begin{proof} By Lemma \ref{4.7}, the action of $R_{\infty}$ on $M_{\infty}(\sigma_{\mathcal{P}}^{\circ})$ is a reduced torsion free quotient of $R_{\infty}(\sigma_{\mathcal{P}})'$. So if $x \in \mathrm{Supp} \: M_{\infty}(\sigma_{\mathcal{P}}^{\circ})$ then $x \in \Sp R_{\infty}(\sigma_{\mathcal{P}})' = \Sp R_{\infty}\otimes_{R_{\tilde{\mathfrak{p}}}^{\square}}R_{\tilde{\mathfrak{p}}}^{\square}(\sigma_{\mathcal{P}})$ then by definition of $R_{\tilde{\mathfrak{p}}}^{\square}(\sigma_{\mathcal{P}})$ we have that $\mathcal{P}_{x} \geq \mathcal{P}$.
\end{proof}

\begin{thm}\label{4.31}
Let $x$ a closed $E$-valued point of $\Sp R_{\infty} (\sigma_{min})[1/p]$, such that  $\pi_{sm}(r_{x})$ is generic and irreducible. Then
$$V(r_{x})^{l.alg} \simeq \pi_{sm}(r_x) \otimes \pi_{alg}(r_x)$$
\end{thm}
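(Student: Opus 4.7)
The plan is to identify $\pi_x$, where $V(r_x)^{l.alg} \simeq \pi_x \otimes \pi_{alg}(r_x)$ by Proposition 4.33 of \cite{MR3529394} as recalled at the start of this section. Lemma \ref{4.25} already embeds $\pi_{sm}(r_x)$ as a $G$-subquotient of $\pi_x$, so the task reduces to showing that $\pi_x$ has no further Jordan--H\"older constituents.

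The key will be to transport the computation to a nearby smooth point $y$ on the same irreducible component of $\Sp R_{\infty}(\sigma_{min})[1/p]$ as $x$, with $y \in V(\mathfrak{a})$ and $\mathcal{P}_y = \mathcal{P}_x$. Such a $y$ can be produced by a refinement of the argument in the proof of Proposition \ref{4.33}: Lemma \ref{4.29} places $x$ inside the closed stratum $\Sp R_{\infty}(\sigma_{\mathcal{P}_x})$, and Lemma 3.9 of \cite{MR3544298} applied to the regular sequence generating $\mathfrak{a}$ produces a closed point of this stratum on the component of $x$, which will then be smooth in $\Sp R_{\infty}(\sigma_{min})[1/p]$ by Lemma \ref{4.11} and Proposition \ref{4.26}. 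For this $y$, Proposition \ref{4.26} gives $V(r_y)^{l.alg} \simeq \pi_{sm}(r_y) \otimes \pi_{alg}(r_y)$. Since $x$ and $y$ both lie in $\Sp R^{\square}_{\tilde{\mathfrak{p}}}(\sigma_{min})[1/p]$, they share the fixed inertial type $\tau$ and Hodge--Tate weights $\mathbf{v}$; together with $\mathcal{P}_x=\mathcal{P}_y$, Lemma \ref{4.28} yields $\pi_{sm}(r_x)|_K \simeq \pi_{sm}(r_y)|_K$, while $\pi_{alg}(r_x) \simeq \pi_{alg}(r_y)$ is automatic from equality of Hodge--Tate weights.

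For every partition-valued function $\mathcal{P}$, Proposition \ref{4.27} will then give
\[\dim_E \mathrm{Hom}_K(\sigma_{\mathcal{P}}, V(r_x)^{l.alg}) \leq \dim_E \mathrm{Hom}_K(\sigma_{\mathcal{P}}, V(r_y)^{l.alg}).\]
Stripping off the common algebraic factor via the Lie algebra argument used in the proof of Proposition \ref{4.9}, and applying the identifications of the previous paragraph, this becomes
\[\dim_E \mathrm{Hom}_K(\sigma_{\mathcal{P}}(\lambda), \pi_x) \leq \dim_E \mathrm{Hom}_K(\sigma_{\mathcal{P}}(\lambda), \pi_{sm}(r_x)).\]
The reverse inequality is immediate from Lemma \ref{4.25} combined with the semisimplicity of $K$-representations on $E$-vector spaces: every irreducible $K$-constituent of the subquotient $\pi_{sm}(r_x)$ appears in $\pi_x|_K$. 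Hence equality holds for every $\mathcal{P}$.

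Summing against multiplicities in the decomposition $\mathrm{Ind}_J^K \lambda = \bigoplus_{\mathcal{P}} \sigma_{\mathcal{P}}(\lambda)^{\oplus m_{\mathcal{P}}}$ yields $\dim_E \mathrm{Hom}_J(\lambda,\pi_x)=\dim_E \mathrm{Hom}_J(\lambda,\pi_{sm}(r_x))$, both finite since $\pi_x$ is admissible. Under the Bushnell--Kutzko equivalence $\mathfrak{M}_{\lambda}\colon \mathcal{R}^{\Omega}(G)\to \mathcal{H}(G,\lambda)\text{-Mod}$, which preserves and reflects subquotients, the relation $\pi_{sm}(r_x) \preceq \pi_x$ transports to a subquotient of finite-dimensional $\mathcal{H}(G,\lambda)$-modules of equal dimension; such a subquotient must coincide with the ambient module, so the inverse equivalence produces $\pi_x \simeq \pi_{sm}(r_x)$, which completes the proof. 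The main obstacle in this plan is the second step: arranging a smooth point $y \in V(\mathfrak{a})$ on the component of $x$ with matching monodromy type $\mathcal{P}_y = \mathcal{P}_x$, which demands a careful analysis of how the stratification constructed in Section \ref{LA.3} interacts with the irreducible components of $\Sp R_{\infty}(\sigma_{min})[1/p]$.
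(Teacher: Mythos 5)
Your overall strategy --- reduce to a nearby smooth point $y \in V(\mathfrak{a})$ on the component of $x$, establish $\mathcal{P}_y = \mathcal{P}_x$ so that Lemma \ref{4.28} gives $\pi_{sm}(r_x)|_K \simeq \pi_{sm}(r_y)|_K$, compare the $\mathrm{Hom}_K(\sigma_{\mathcal{P}},\cdot)$ dimensions, sum to get an equality of $\mathrm{Hom}_J(\lambda,\cdot)$ dimensions, and invoke the Bushnell--Kutzko equivalence --- is the same as the paper's. However, you have reordered two steps in a way that creates a circular dependence, and the gap you flag at the end is not a mere technicality to be filled.

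The circularity: you want to \emph{first} produce $y$ with $\mathcal{P}_y = \mathcal{P}_x$, and \emph{then} compare $\mathrm{Hom}_K(\sigma_{\mathcal{P}},\cdot)$ at $x$ and at $y$. But the construction you propose --- place $x$ in $\Sp R_\infty(\sigma_{\mathcal{P}_x})$ via Lemma \ref{4.29} and apply Lemma 3.9 of \cite{MR3544298} --- gives a $y$ with $\mathcal{P}_y \geq \mathcal{P}_x$ (by the definition of the stratum and Proposition \ref{4.30}), and nothing more. The reverse inequality $\mathcal{P}_x \geq \mathcal{P}_y$ requires knowing that $x$ lies in $\mathrm{Supp}\,M_\infty(\sigma_{\mathcal{P}_y}^{\circ})$, which in turn amounts to the non-vanishing of $M_\infty(\sigma_{\mathcal{P}_y}^{\circ})\otimes\kappa(x)$; and the only handle you have on that is precisely the comparison $\dim M_\infty(\sigma_{\mathcal{P}}^{\circ})\otimes\kappa(x) = \dim M_\infty(\sigma_{\mathcal{P}}^{\circ})\otimes\kappa(y)$ that your third step is supposed to establish --- but that step already uses $\pi_{sm}(r_x)|_K \simeq \pi_{sm}(r_y)|_K$ from Lemma \ref{4.28}, i.e. it presupposes $\mathcal{P}_x = \mathcal{P}_y$.

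The paper breaks this cycle by reversing the order. It picks \emph{any} smooth closed $y \in V(\mathfrak{a})$ on the component of $x$, with no constraint on $\mathcal{P}_y$ (Lemma 3.9 of \cite{MR3544298} together with Proposition \ref{4.26} provide such a point; $x$ itself is smooth by Theorem 1.2.7 of \cite{MR3546966} and Lemma \ref{4.11} since $\pi_{sm}(r_x)$ is generic). Applying Proposition \ref{4.27} in both directions gives $\dim\mathrm{Hom}_K(\sigma_{\mathcal{P}},V(r_x)^{l.alg}) = \dim\mathrm{Hom}_K(\sigma_{\mathcal{P}},V(r_y)^{l.alg})$ for every $\mathcal{P}$, hence $\dim M_\infty(\sigma_{\mathcal{P}}^{\circ})\otimes\kappa(x) = \dim M_\infty(\sigma_{\mathcal{P}}^{\circ})\otimes\kappa(y)$, so $x$ and $y$ lie in the same collection of supports $\mathrm{Supp}\,M_\infty(\sigma_{\mathcal{P}}^{\circ})$. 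Only now are Lemma \ref{4.29} and Proposition \ref{4.30} invoked, twice, once with $\mathcal{P} = \mathcal{P}_x$ and once with $\mathcal{P} = \mathcal{P}_y$, to deduce $\mathcal{P}_x = \mathcal{P}_y$. Lemma \ref{4.28} then converts the comparison at $y$ into a comparison at $x$, and the rest of the argument proceeds as in your final paragraph. So the order matters: the equality of Hom dimensions must be proved \emph{before} $\mathcal{P}_x = \mathcal{P}_y$, not after.
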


\begin{proof} By Lemma \ref{4.25} $\pi_{sm}(r_x)$ is a $G$-subquotient of $\pi_{x}$, and for every partition-valued function $\mathcal{P}$
$$\dim_E \mathrm{Hom}_{K}((\sigma_{\mathcal{P}})_{sm}, \pi_{sm}(r_x)) \leq \dim_E \mathrm{Hom}_{K}((\sigma_{\mathcal{P}})_{sm}, \pi_{x})$$

Let $y \in \mathrm{Supp}(M_{\infty}(\lambda)) \cap V(\mathfrak{a})$ a smooth closed point that lies on the same irreducible component $V(\mathfrak{p})$ as $x$. Such a point $y$ always exists because $V(\mathfrak{p})\cap V(\mathfrak{a})\neq 0$ (by definition of an automorphic component). Indeed by Proposition \ref{4.26}, if $y \in V\cap V(\mathfrak{a})$, then $y$ is smooth. Moreover we have that,
$$V(r_{y})^{l.alg} \simeq \pi_{sm}(r_y) \otimes \pi_{alg}(r_y)$$

\noindent by Proposition \ref{4.26}. Then it follows from Proposition \ref{4.27}, that for every partition-valued function $\mathcal{P}$, we have 
$$\dim_E \mathrm{Hom}_{K}((\sigma_{\mathcal{P}})_{sm}, \pi_{x}) =\dim_E \mathrm{Hom}_{K}((\sigma_{\mathcal{P}}), V(r_x)^{l.alg})$$
$$ = \dim_E \mathrm{Hom}_{K}((\sigma_{\mathcal{P}}), V(r_y)^{l.alg}) = \dim_E \mathrm{Hom}_{K}((\sigma_{\mathcal{P}})_{sm}, \pi_{sm}(r_y))$$

\noindent because $x$ and $y$ are both smooth points, lying on the same component. In particular we have that
\[\dim_E M_{\infty}(\sigma_{\mathcal{P}}^{\circ}) \otimes_{R_{\infty}} \kappa(x)= \dim_E M_{\infty}(\sigma_{\mathcal{P}}^{\circ}) \otimes_{R_{\infty}} \kappa(y)\]

\noindent Then  $y \in \mathrm{Supp} \: M_{\infty}(\sigma_{\mathcal{P}}^{\circ})$ if and only if $ x \in \mathrm{Supp} \: M_{\infty}(\sigma_{\mathcal{P}})$.

Taking $\mathcal{P}=\mathcal{P}_x$, by Lemma \ref{4.29} we have that $ x \in \mathrm{Supp} \: M_{\infty}(\sigma_{\mathcal{P}_x}^{\circ})$ hence $y \in \mathrm{Supp} \: M_{\infty}(\sigma_{\mathcal{P}_x}^{\circ})$. Then by Proposition \ref{4.30}, $\mathcal{P}_y \geq \mathcal{P}_x$. Exchanging the roles of $x$ and $y$, we get $\mathcal{P}_y = \mathcal{P}_x$. This means that the monodromy operators of $WD(r_x)$ and $WD(r_y)$ are the same. All together we have:
$$\dim_E \mathrm{Hom}_{K}((\sigma_{\mathcal{P}})_{sm}, \pi_{sm}(r_x)) \leq \dim_E \mathrm{Hom}_{K}((\sigma_{\mathcal{P}})_{sm}, \pi_{sm})=$$
$$= \dim_E \mathrm{Hom}_{K}((\sigma_{\mathcal{P}})_{sm}, \pi_{sm}(r_y))$$

\noindent Similarly using the Proposition 4.34 \cite{MR3529394},we get 
$$\dim_E \mathrm{Hom}_{J}(\lambda_{sm}, \pi_{sm}(r_x)) \leq \dim_E \mathrm{Hom}_{J}(\lambda_{x}, \pi_{x})=$$
$$= \dim_E \mathrm{Hom}_{J}(\lambda_{sm}, \pi_{sm}(r_y))$$

We have shown that $x$ and $y$ have the same monodromy, then by Lemma \ref{4.28}, we get that $\pi_{sm}(r_x)|K \simeq \pi_{sm}(r_y)|K$, so $\pi_{sm}(r_x)|J \simeq \pi_{sm}(r_y)|J$. It follows that the inequality above, is an equality.

We know that the functor $\mathrm{Hom}_{J}(\lambda_{sm}, .)$ is an exact functor. It follows that $\mathrm{Hom}_{J}(\lambda_{sm}, \pi_{sm}(r_x))$ is a subquotient of $\mathrm{Hom}_{J}(\lambda_{sm}, \pi_{x})$ in the category of $\mathcal{H}(G, \lambda_{sm})$-modules, because by Lemma \ref{4.25} $\pi_{sm}(r_x)$ is a subquotient of $\pi_{x}$. Since those two $\mathcal{H}(G, \lambda_{sm})$-modules have the same dimension they must be equal. Using the fact that the functor $\mathrm{Hom}_{J}(\lambda_{sm}, .)$ is an equivalence of categories, we get that $\pi_{sm}(r_x)\simeq \pi_{x}$.
\end{proof}

\begin{coro}\label{4.32}
Let $x \in \Spm R_{\infty}(\sigma_{min})[1/p]$ such that $\pi_{sm}(r_x)$ is generic, then $\mathcal{P}_{x} \geq \mathcal{P}$  implies that  $x \in \mathrm{Supp} \: M_{\infty}(\sigma_{\mathcal{P}}^{\circ})$.
\end{coro}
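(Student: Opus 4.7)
The plan is to translate the question of whether $x$ lies in the support of $M_\infty(\sigma_{\mathcal{P}}^\circ)$ into a question about Hom-spaces on the locally algebraic side, where Theorem \ref{4.31} gives an explicit description of $V(r_x)^{l.alg}$, and then to reduce matters to the smooth type theory of \cite{Pyv2}.

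First, I would apply the combination of Schikhof duality and Frobenius reciprocity packaged in Proposition 2.22 of \cite{MR3306557}, already invoked in the proof of Proposition \ref{4.33}, to obtain
\[M_\infty(\sigma_{\mathcal{P}}^\circ) \otimes_{R_\infty} \kappa(x) \simeq \mathrm{Hom}_E^{cont}\bigl(\mathrm{Hom}_K(\sigma_{\mathcal{P}}, V(r_x)^{l.alg}), E\bigr).\]
Hence $x \in \mathrm{Supp}\, M_\infty(\sigma_{\mathcal{P}}^\circ)$ if and only if $\mathrm{Hom}_K(\sigma_{\mathcal{P}}, V(r_x)^{l.alg}) \neq 0$. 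Since $\pi_{sm}(r_x)$ is generic and irreducible under the standing assumptions of this section, Theorem \ref{4.31} applies and yields $V(r_x)^{l.alg} \simeq \pi_{sm}(r_x) \otimes \pi_{alg}(r_x)$.

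Next, I would peel off the algebraic factor. By definition $\sigma_{\mathcal{P}} = \sigma_{\mathcal{P}}(\lambda) \otimes \sigma_{alg}$, where $\sigma_{alg}$ is the restriction to $K$ of $\pi_{alg}(r_x)$, and $\sigma_{alg}$ is absolutely irreducible as an algebraic $K$-representation. The tensor-factoring argument already used in the proof of Proposition \ref{4.9}, which rests on smooth--algebraic disjointness together with Schur's lemma for $\sigma_{alg}$, then provides the canonical identification
\[\mathrm{Hom}_K\bigl(\sigma_{\mathcal{P}}(\lambda) \otimes \sigma_{alg}, \pi_{sm}(r_x) \otimes \sigma_{alg}\bigr) \simeq \mathrm{Hom}_K\bigl(\sigma_{\mathcal{P}}(\lambda), \pi_{sm}(r_x)\bigr).\]
The corollary therefore reduces to showing this smooth Hom-space is non-zero whenever $\mathcal{P}_x \geq \mathcal{P}$.

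Finally, this purely smooth non-vanishing is precisely the input furnished by the theory of typical representations developed in \cite{Pyv2} (in the spirit of \cite{MR3769675}): for a generic irreducible representation $\pi_{sm}(r_x)$ in $\Omega$ with associated partition-valued function $\mathcal{P}_x$, Theorem 3.1 of \cite{Pyv2} asserts that $\mathrm{Hom}_K(\sigma_{\mathcal{P}}(\lambda), \pi_{sm}(r_x)) \neq 0$ if and only if $\mathcal{P}_x \geq \mathcal{P}$, so under our hypothesis we get the non-vanishing and conclude. The proof is essentially a packaging of Theorem \ref{4.31} with the types input from \cite{Pyv2}, so the real difficulty has already been absorbed into Theorem \ref{4.31}; the only step warranting attention is to verify that the tensor-factoring identification quoted from Proposition \ref{4.9} extends verbatim to an arbitrary $\sigma_{\mathcal{P}}$ in place of $\sigma_{min}$, which is automatic since it depends solely on the absolute irreducibility of the common algebraic factor $\sigma_{alg}$.
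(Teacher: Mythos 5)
Your argument is correct and follows the same route as the paper: reduce the support question to a Hom-space question via Proposition 2.22 of \cite{MR3306557}, identify $V(r_x)^{l.alg}$ via Theorem \ref{4.31}, strip the algebraic factor, and appeal to the smooth type theory to get nonvanishing of $\mathrm{Hom}_K(\sigma_{\mathcal{P}}(\lambda),\pi_{sm}(r_x))$ when $\mathcal{P}\leq\mathcal{P}_x$. The one thing to fix is the citation for the final step: the ``if and only if'' characterization $\mathrm{Hom}_K(\sigma_{\mathcal{P}}(\lambda),\pi)\neq 0 \Leftrightarrow \mathcal{P}\leq\mathcal{P}_\pi$ for generic $\pi$ is Theorem 3.7 of \cite{MR3769675}, which is what the paper invokes here; Theorem 3.1 of \cite{Pyv2} is used elsewhere in the paper only for the special case $\mathcal{P}=\mathcal{P}_{min}$ (i.e.\ that generic $\pi$ contains $\sigma_{min}(\lambda)$), so attributing the general statement to it is not supported by the way that reference is used in this paper.
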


\begin{proof} From Theorem 3.7 \cite{MR3769675}, it follows that $\mathcal{P}_x$ is the maximal partition $\mathcal{P}$ such that $\mathrm{Hom}_{K}((\sigma_{\mathcal{P}})_{sm}, \pi_{sm}(r_x)) \neq 0$. Then by maximality, $\mathcal{P}_{x} \geq \mathcal{P}$  implies that $\mathrm{Hom}_{K}((\sigma_{\mathcal{P}})_{sm}, \pi_{sm}(r_x)) \neq 0$. However by the Theorem \ref{4.31} combined with Proposition 2.22 of \cite{MR3306557} we have that $\mathrm{Hom}_{K}((\sigma_{\mathcal{P}})_{sm}, \pi_{sm}(r_x)) \neq 0$ if and only if  $M_{\infty}(\sigma_{\mathcal{P}}^{\circ}) \otimes_{R_{\infty}} \kappa(x) \neq 0$. So $x \in \mathrm{Supp} \: M_{\infty}(\sigma_{\mathcal{P}}^{\circ})$.
\end{proof}

\section{Points on automorphic components}\label{A.1}

Recall from section \ref{LA.5}, that 
$$\Sp R_{\infty}(\sigma_{min}) =\mathrm{Supp}(M_{\infty}(\sigma_{min}^{\circ})) = \mathrm{Supp}(M_{\infty}(\lambda^{\circ})) = \Sp R_{\infty}(\lambda).$$

\noindent In what follows we will not differentiate between these four sets.

In this section we will prove that if a Galois representation $r$ is generic and corresponds to a closed point lying on an automorphic component then $BS(r)$ admits a $G$-invariant norm. Let's say a few words about automorphic components. It follows from Proposition \ref{4.9}, that $\Sp R_{\infty}(\sigma_{min})$ is a union of irreducible components of $\Sp R_{\infty}(\sigma_{min})'$. An irreducible component of $\Sp R_{\infty}(\sigma_{min})'$ which is also an irreducible component of $\Sp R_{\infty}(\sigma_{min})$ is called an \textbf{automorphic component}.

By Corollary 2.11 \cite{MR3529394}, we have  $M_{\infty}/\mathfrak{a}M_{\infty} \simeq \tilde{S}_{\xi,\tau}(U^{\mathfrak{p}},\mathcal{O})_{\mathfrak{m}}^{d}$, where the ideal $\mathfrak{a}=(x_1,...,x_h)$  is generated by a regular sequence $(x_1,...,x_h)$ on $M_{\infty}(\sigma_{min}^{\circ})$. We know that $(\varpi,x_1,\ldots, x_h)$ is a system of parameters for $M_{\infty}(\sigma_{min}^{\circ})$.  Then by Lemma 3.9 \cite{MR3544298}, an irreducible component of $\Sp R_{\infty}(\sigma_{min})$ contains a closed point $x \in  \Spm (R_{\infty}(\sigma_{min})/\mathfrak{a})[1/p]$. The set $ \Spm (R_{\infty}(\sigma_{min})/\mathfrak{a})[1/p]$ is finite, since the ring $(R_{\infty}(\sigma_{min})/\mathfrak{a})[1/p]$ is zero-dimensional. This point $x \in \mathrm{Supp}(M_{\infty}(\sigma_{min}^{\circ})) \cap V(\mathfrak{a})$ corresponds to a Galois representation attached to algebraic automorphic forms (cf. section 2.6 \cite{MR3529394}).

Let's outline, briefly, how $x$ gives rise to a Galois representation. By Proposition 5.3.2 \cite{MR3134019} there is a unique lift of a globalization to the Hecke algebra; then by universal property of a global deformation ring we get a surjective map from this global deformation ring to the Hecke algebra. The point $x$ corresponds to a maximal ideal of this Hecke algebra. Thus $x$ corresponds to a maximal ideal of this global deformation ring via the map from global deformation ring to the Hecke algebra. The maximal ideals of global deformation ring correspond to Galois representations of a number field, restricting it to the decomposition group we get a local Galois representation we have been looking for.
 
The components of $\Sp R_{\infty}(\sigma_{min})'$ which contain such a point are precisely the automorphic components. This observation justifies why those components are called automorphic. Now we will deduce some new cases of the Breuil-Schenider conjecture.

\begin{thm}\label{3.1}
Suppose $p\nmid 2n$, and that $r : G_F \longrightarrow GL_{n}(E)$ is a potentially semi-stable Galois representation of regular weight, such that $\pi_{sm}(r)$ is generic. If $r$ corresponds to a closed point $x \in \Sp R_{\infty}(\sigma_{min})[1/p]$, then $\pi_{sm}(r)\otimes \pi_{alg}(r)$ admits a non-zero unitary admissible Banach completion.
\end{thm}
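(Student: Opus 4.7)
The plan is to deduce the theorem as a fairly direct corollary of Theorem \ref{4.31} (Theorem \ref{t2}) together with the fact, already recalled in (\ref{V}) and the surrounding discussion, that $V(r)$ is by construction an admissible unitary $E$-Banach space representation of $G$. The only things left to verify are non-vanishing of $V(r)^{l.alg}$ and the passage from identification of locally algebraic vectors to an actual invariant norm on $\pi_{sm}(r)\otimes \pi_{alg}(r)$.

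First I would choose, as in the discussion of (\ref{V}), an extension of the ring homomorphism $x:R_{\tilde{\mathfrak{p}}}^{\square}\to\mathcal{O}$ cutting out $r$ to a homomorphism $x:R_{\infty}\to\mathcal{O}$, and form the Banach space $V(r):=\mathrm{Hom}_{\mathcal{O}}^{cont}(M_{\infty}\otimes_{R_{\infty},x}\mathcal{O},E)$. The image of $M_{\infty}\otimes_{R_{\infty},x}\mathcal{O}$ under Schikhof duality is a $G$-stable open bounded $\mathcal{O}$-lattice in $V(r)$, which both defines the $G$-invariant norm and shows that the resulting Banach space representation is unitary and admissible. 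Next, since by hypothesis $x\in \Sp R_{\infty}(\sigma_{min})[1/p]=\mathrm{Supp}(M_{\infty}(\sigma_{min}^{\circ}))$, Proposition 2.22 of \cite{MR3306557} (as used in Section~\ref{LA.6}) yields $\mathrm{Hom}_{K}(\sigma_{min},V(r)^{l.alg})\neq 0$, so in particular $V(r)^{l.alg}\neq 0$. Combined with the assumption that $\pi_{sm}(r)$ is generic, Theorem \ref{4.31} then gives the identification
\[
V(r)^{l.alg}\simeq \pi_{sm}(r)\otimes\pi_{alg}(r),
\]
so that the irreducible locally algebraic representation $BS(r)=\pi_{sm}(r)\otimes\pi_{alg}(r)$ embeds as a non-zero $G$-stable subspace of the admissible unitary Banach representation $V(r)$.

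Finally, I would take the closure $W:=\overline{V(r)^{l.alg}}$ inside $V(r)$. As a closed $G$-stable subspace of an admissible unitary $E$-Banach representation, $W$ is itself admissible and unitary, and it contains $\pi_{sm}(r)\otimes\pi_{alg}(r)$ as a dense $G$-subrepresentation; equivalently, the restriction of the norm on $V(r)$ to $V(r)^{l.alg}$ is a $G$-invariant norm on $\pi_{sm}(r)\otimes\pi_{alg}(r)$ whose Hausdorff completion is $W$. This is the desired non-zero unitary admissible Banach completion. The only non-trivial input is Theorem \ref{4.31}; everything in this proof beyond that is formal, so there is no further obstacle to overcome here — the present theorem is essentially a packaging step converting the locally algebraic vectors computation into the Breuil--Schneider statement.
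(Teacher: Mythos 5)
Your argument is correct and is essentially the same as the paper's: both invoke Theorem \ref{4.31} to identify $V(r)^{l.alg}$ with $\pi_{sm}(r)\otimes\pi_{alg}(r)$ and then use the fact that $V(r)$ is an admissible unitary Banach space representation (which the paper cites as Proposition 2.13 of \cite{MR3529394}, and you re-derive via Schikhof duality) to restrict the $G$-invariant norm. The extra steps you include — verifying $V(r)^{l.alg}\neq 0$ separately via Proposition 2.22 of \cite{MR3306557}, and taking the closure $W=\overline{V(r)^{l.alg}}$ to exhibit the admissible unitary completion explicitly — are harmless elaborations (the non-vanishing already follows from Theorem \ref{4.31} since $\pi_{sm}(r)\otimes\pi_{alg}(r)$ is a nonzero irreducible representation) rather than a genuinely different route.
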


\begin{proof} By Theorem \ref{4.31}, we have that $\pi_{sm}(r)\otimes \pi_{alg}(r) \simeq V(r)^{l.alg}$, and by Proposition 2.13\cite{MR3529394}, $V(r)$ is an admissible unitary Banach space representation, hence a $G$-invariant norm on $V(r)$ restricts to a $G$-invariant norm on $\pi_{sm}(r)\otimes \pi_{alg}(r)$.
\end{proof}

\noindent \textbf{Remark.} It is expected that $\Sp R_{\infty}(\sigma_{min})[1/p] = \Sp R_{\infty}(\sigma_{min})'[1/p]$, i.e. that all the components are automorphic.

\subsection*{Acknowledgments}   The results of this paper are the main part of the author's PhD thesis. The author tremendously grateful to his advisor Vytautas Pa\v{s}k\={u}nas for sharing his ideas with the author and for many helpful discussions. The author would also like to thank the referee for useful comments and corrections. This work was supported by SFB/TR 45 of the DFG.

\bibliographystyle{alpha}
\addcontentsline{toc}{section}{References}
\bibliography{Locally_alg_1}
\nocite{*}

\noindent Morningside Center of Mathematics, No.55 Zhongguancun Donglu, Academy of Mathematics and Systems Science, Beijing , Haidian District, 100190 China
\\
\textit{E-mail address}: pyvovarov@amss.ac.cn
\end{document}